\numberwithin{equation}{subsection}
\newtheorem{theorem}[subsection]{Theorem}
\newtheorem{corollary}[subsection]{Corollary}
\newtheorem{lemma}[subsection]{Lemma}
\newtheorem{proposition}[subsection]{Proposition}
\theoremstyle{definition}
\newtheorem{example}[subsection]{Example}
\newtheorem{definition}[subsection]{Definition}
\newtheorem{situation}[subsection]{Situation}
\newtheorem{assumption}[subsection]{Assumption}
\newtheorem{remark}[subsection]{Remark}
\DeclareSymbolFontAlphabet{\mathbb}{AMSb} 
\DeclareSymbolFontAlphabet{\mathbbl}{bbold}
\newcommand{\prism}{{\mathlarger{\mathbbl{\Delta}}}}
\def\bfP{\mathbf{P}}
\def\NN{\mathbb{N}}
\def\ZZ{\mathbb{Z}}
\def\scrC{\mathscr{C}}
\newcommand{\rd}{\mathrm{d}}
\newcommand{\rT}{\mathrm{T}}
\def\calC{\mathcal{C}}
\def\calE{\mathcal{E}}
\def\calF{\mathcal{F}}
\def\calI{\mathcal{I}}
\def\calJ{\mathcal{J}}
\def\calK{\mathcal{K}}
\def\calM{\mathcal{M}}
\DeclareMathOperator{\End}{End}
\DeclareMathOperator{\Hom}{Hom}
\DeclareMathOperator{\Bl}{Bl}
\DeclareMathOperator{\RHom}{R\mathcal{H}om}
\DeclareMathOperator{\Spec}{Spec}
\DeclareMathOperator{\Spf}{Spf}
\DeclareMathOperator{\Mod}{\mathbf{Mod}}
\DeclareMathOperator{\Sym}{Sym}
\DeclareMathOperator{\DR}{DR^{\bullet}}
\newcommand{\cO}{\mathcal{O}}
\newcommand{\pr}{\mathrm{pr}}
\newcommand{\CR}{\mathbf{CR}}
\newcommand{\CA}{\mathrm{\check{C}A}}
\newcommand{\ra}{\rightarrow}
\newcommand{\xra}{\xrightarrow}
\newcommand{\et}{\mathrm{\acute{e}t}}
\newcommand{\Higgs}{\mathbf{Higgs}}
\newcommand{\id}{\mathrm{id}}
\newcommand{\Kos}{\mathrm{Kos}}
\newcommand{\FMod}{\mathbf{FMod}}
\newcommand{\Desc}{\mathbf{Desc}^{\wedge}}
\newcommand{\FDesc}{\mathbf{FDesc}}
\newcommand{\Strat}{\mathbf{Strat}}
\newcommand{\ev}{\mathrm{ev}}
\newcommand{\mbar}{\underline{m}}
\newcommand{\Et}{\mathrm{\acute Et}}
\begin{document}

	\title{Finiteness and duality for the cohomology of prismatic crystals}
	
	\author{Yichao Tian}
	\address{Morningside Center of Mathematics,  Hua Loo-Keng Key Laboratory of Mathematics\\
		Academy of Mathematics and Systems Science, CAS\\
		55 Zhong Guan Cun East Road\\
		100190, Beijing, China}
	\email{yichaot@math.ac.cn}
	\subjclass[2010]{14F30, 14F40}
	\date{}

	\begin{abstract}
		Let $(A, I)$ be a  bounded prism, and $X$ be a  smooth $p$-adic formal scheme over $\Spf(A/I)$.  We consider the notion of  crystals on  Bhatt--Scholze's prismatic site $(X/A)_{\prism}$ of $X$ relative to $A$. We prove that if $X$ is proper over $\Spf(A/I)$ of relative dimension $n$, then the cohomology of a prismatic crystal is a perfect complex of $A$-modules with tor-amplitude in degrees $[0,2n]$. We also establish a Poincar\'e duality for the reduced prismatic crystals, i.e. the crystals over  the reduced structural sheaf of $(X/A)_{\prism}$. The key ingredient is an explicit local description of reduced prismatic crystals in terms of Higgs modules. 
	\end{abstract}
	\maketitle
	
	\setcounter{section}{-1}
	
	\section{Introduction}
	In a recent ground breaking work \cite{BS}, Bhatt and Scholze introduced the prismatic site for $p$-adic formal schemes (with $p$  a fixed prime), and they studied  the cohomology of the natural structural sheaf on the prismatic site, called the prismatic cohomology.   This new cohomology theory seems to occupy a central role in the study of cohomological properties of $p$-adic formal schemes, since it is naturally related to various previously known $p$-adic cohomology theories, and thus provides new insight on the $p$-adic comparison theorems with integral coefficients. For instance, it gives a natural site theoretic construction of the $A_{\inf}$-cohomology which was previously constructed  by Bhatt--Morrow--Scholze  \cite{BMS1} in an ad-hoc way using the magical $L\eta$-functor. 
	
	As  analogues of  classical crystalline crystals, there is a natural notion of crystals on the  prismatic site. Indeed, such objects have already been considered in some special cases  by many authors such as Ansch\"utz--Le Bras  \cite{AB}, Gros--Le Strum--Quir\'os \cite{GSQ}, Li \cite{Li} and Morrow--Tsuji \cite{MS}. In this article, we will consider the cohomology of   prismatic crystals on rather general  prismatic sites and prove some finiteness and duality theorems for such crystals.

	Let us explain the main results of this article in more detail. Let $(A,I)$ be a  bounded prism, and $X$ be a smooth $p$-adic formal scheme over $A/I$. We denote by $(X/A)_{\prism}$ Bhatt--Scholze's prismatic site of $X$ relative to $A$ \cite{BS}*{Def. 4.1}. The category $(X/A)_{\prism}$ consists of   bounded prisms $(B,J)$ over $(A,I)$ together with a structural map $\Spf(B/J)\to X$, and coverings in $(X/A)_{\prism}$ are $(p,I)$-completely faithfully flat maps of such bounded prisms.  We have the  structural sheaf $\cO_{\prism}$ (resp. the reduced structural sheaf $\overline{\cO}_{\prism}$) which sends each object $(B,J)$ to $B$ (resp. to  $B/J$).
	An $\cO_{\prism}$-crystal (resp.  an $\overline{\cO}_{\prism}$-crystal) is a $(p,I)$-completely  flat  and derived $(p,I)$-complete $\cO_{\prism}$-module (resp. $\overline{\cO}_{\prism}$-module) that satisfies similar  properties as classical crystals on crystalline sites (cf. Def.~\ref{D:crystals}).  Here, we insist to impose the flatness condition in order to avoid some technical difficulties in faithfully flat descent for derived $(I, p)$-complete modules (cf. Prop.~\ref{P:flat-descent}).

	The first main result of this article is a finiteness theorem for $\cO_{{\prism}}$-crystals (Theorem~\ref{T:finiteness-crystal}), which claims that, if $X$ is proper and smooth of relative dimension $n$ over $A/I$ and $\calF$ is an $\cO_{\prism}$-crystal locally free of finite rank, then $R\Gamma((X/A)_{\prism},\calF)$ is a perfect complex of $A$-modules with tor-amplitude  in degree $[0,2n]$. Moreover, the formation of $R\Gamma((X/A)_{\prism},\calF)$ commutes with arbitrary base change in $A$.
	This finiteness theorem for $\cO_{{\prism}}$-crystals is a consequence of  a similar  result for $\overline{\cO}_{\prism}$-crystals. Actually, if $\calE$ is an $\overline{\cO}_{\prism}$-crystal locally free of finite rank, we will show  in Theorem~\ref{T:Obar-crystal} that the derived push-forward of $\calE$ to the \'etale topos of $X$ is a perfect complex of $\cO_X$-modules with tor-amplitude  in $[0,n]$. 
	
	We   are  thus reduced  to the study of $\overline{\cO}_{\prism}$-crystals. Since the problem is local for the \'etale topology of $X$, we may assume that $X=\Spf(R)$ is affine such that    $R$ is  $p$-completely \'etale   over the convergent power series ring $A/I\langle T_1,\dots, T_n\rangle$. In this case, we can give a rather explicit description of $\cO_{{\prism}}$-crystals in terms of Higgs modules. More precisely,  
	after choosing a smooth lift $\widetilde R$ over $A$ of $R$ together with a $\delta$-structure on $\widetilde R$ compatible with that on $A$, we can show that there exists an equivalence  between the category of $\overline{\cO}_{{\prism}}$-crystals and that of topologically quasi-nilpotent Higgs modules over $R$ (cf. Theorem~\ref{T:crystals-Higgs});
	furthermore, the cohomology of an $\overline{\cO}_{{\prism}}$-crystal is computed by  the de Rham complex of its associated Higgs module (cf. Theorem~\ref{T:cohomology-Higgs}). From this description, our  finiteness theorem for $\overline{\cO}_{{\prism}}$-crystals  follows easily.
	
	As another application of the local description of $\overline{\cO}_{{\prism}}$-crystals, we establish also in Theorem~\ref{T:poincare-pairing} a Poincar\'e duality for the cohomology of $\overline{\cO}_{{\prism}}$-crystals. This  can be viewed as a combination of the duality for de Rham complexes of Higgs modules and the classical Grothendieck--Serre duality. If one can construct a trace map for the prismatic cohomology of proper and smooth formal schemes, our results imply also a  Poincar\'e duality for $\cO_{{\prism}}$-crystals as well (cf. Remark~\ref{R:poincare-duality}).

	The organization of this article is as follows. In Section~1, we prove some preliminary results in commutative  algebra. The main result of this section is a descent result on derived $I$-complete and $I$-completely flat modules (cf. Proposition~\ref{P:flat-descent}). In Section 2, we discuss the notion of prismatic crystals and state the main finiteness theorems. Section 3 is devoted to the local study of prismatic crystals. When $X=\Spf(R)$ is affine equipped with a lifting $\widetilde R$ over $A$ together with a $\delta$-structure, we show that the category of prismatic crystals is equivalent to that of modules over $\widetilde R$ equipped with a certain stratification (cf. Proposition \ref{P:crystal-stratification}). In Section 4, we work in the local situation of affine formal schemes equipped with \'etale local coordinates, and we related $\overline{\cO}_{\prism}$-crystals to Higgs modules as mentioned above. Then we finish the proof of finiteness theorems at the end of Section 4. Finally, in Section~\ref{S:global application}, we prove a Poincar\'e duality for $\overline{\cO}_{{\prism}}$.

	It should be pointed out   that  Frobenius structures and  filtrations on prismatic crystals are ignored in this article, even though these aspects should play important roles in many applications.
	
	The results of this article was announced in a conference in honor of Luc Illusie in June 2021. At this conference, I learned that similar  results in this article were also obtained independently by Ogus \cite{Ogus} for crystalline prisms  and Bhatt--Lurie \cite{BL} for absolute prismatic crystals.

	\subsection*{Acknowledgement}
	The author would like to thank Arthur Ogus, Bernard Le Strum for helpful discussion on the contents of the paper, and  the anonymous referee for many valuable comments.
	This work is  supported by CAS Project for Young Scientists in Basic Research  (Grant No. YSBR-033) and Chinese National Science Fund for Distinguished Young Scholars (Grant No. 12225112).  
	
	\subsection{Notation}
	Let $A$ be a commutative ring. We denote by $\Mod(A)$ the abelian  category of $A$-modules, and by $D(A)$ the derived category of $\Mod(A)$.
	If $M$ is an $A$-module and $f\in A$ is an element, we denote by $M[f]$ the kernel of the multiplication by $f$ on $M$. We put also $M[f^{\infty}]=\bigcup_{n\geqslant 1}M[f^n]$. 
	
	Let  $J\subset A$ be a PD-ideal. For $x\in J$ and an integer $n\geqslant 1$, we denote by $x^{[n]}$ the $n$-th divided power of $x$.
	
	\subsection{Sign Conventions}\label{S:sign-covention}
	We will use the following conventions on the signs of complexes:
	
	\begin{itemize}
		\item For a na\"ive double complex $K^{\bullet, \bullet}$, its associated simple complex  $\underline{s}(K^{\bullet, \bullet})$  has $n$-th differential  given by 
		\[
		\sum_{i+j=n}(d_{1}^{i,j}+(-1)^id_{2}^{i,j})\colon	\bigoplus_{i+j=n}K^{i,j}\to \bigoplus_{a+b=n+1} K^{a,b},
		\]
		where $d_1^{i,j}: K^{i,j}\to K^{i+1,j}$ and $d_2^{i,j}\colon K^{i,j}\to K^{i,j+1}$ are the two differentials of $K^{\bullet,\bullet}$ satisfying $d_1^{i,j+1}\circ d_2^{i,j}=d_{2}^{i+1, j}\circ d_{1}^{i,j}$. 
		
		\item For a complex $K$ and an integer $n\in \ZZ$, the $i$-th differential of the shift $K[n]$ is obtained by multiplying $(-1)^{n}$ with the $(i+n)$-th differential of $K$.
		
		\item For two complexes $K$ and $L$, their tensor product $K\otimes L$ is the simple complex attached  to the na\"ive double complex with $(i,j)$-component given by $K^i\otimes L^j$ (whenever such a tensor product is well defined). 
		The hom-complex $\Hom^\bullet(K,L)$ is defined as the  simple complex with $n$-th term
		\[
		\Hom^n(K, L)=\prod_{i+j=n}\Hom(K^{-j}, L^i)
		\]
		and $n$-th differential 
		\[
		d(f)=d_L\circ f-(-1)^nf\circ d_K
		\]

	\end{itemize}

	\section{Preliminaries in commutative algebra}


	We recall first some general facts on derived completion, for which  the main reference is \cite{stacks-project}*{\href{https://stacks.math.columbia.edu/tag/091N}{Tag 091N}}.
	
	We consider a pair $(A,J)$, where $A$ is a commutative ring, and $J\subset A$  is an ideal.   A complex $K$ of $A$-modules is called derived $J$-complete, if $R\Hom(A_f,K)=0$ for all $f\in J$, where $A_f=A[\frac{1}{f}]$; and an $A$-module $M$ is called derived $J$-complete  if $M[0]$ is derived $J$-complete. 
	Then $K$ is derived $J$-complete if and only if so are $H^q(K)$ for all $q\in \ZZ$.
	The derived $J$-complete $A$-modules form an abelian full subcategory of $\Mod(A)$ that is stable under kernels, cokernels, images and extensions. Any classically $J$-adically complete $A$-module is derived $J$-complete, but the converse is not necessarily true. 
	Let    $D_{comp}(A)$ be  the full subcategory   consisting of $D(A)$ consisting of derived $J$-complete objects. 
	
	Assume from now on that $J$ is finitely generated. Then the natural inclusion functor $D_{comp}(A)\to D(A)$ admits a left adjoint $K\mapsto \widehat{K}$, called the \emph{derived $J$-completion}. An explicit construction of $\widehat{K}$ is given as follows. Write $J=(f_1,\dots, f_r)$. Let $\Kos(A; f_1,\dots, f_r)$ be the homological Koszul complex sitting in degrees $[-r, 0]$: 
	\[ \wedge ^r A^{\oplus r}\to \wedge^{r-1}A^{\oplus r}\to\cdots \to A^{\oplus r}\xra{(f_1,\dots, f_r)} A.\]
	For an integer $n\geqslant 1$, we have a transition map of complexes \[\Kos(A; f_1^{n+1},\dots, f_r^{n+1})\to \Kos(A; f_1^{n},\dots,f_r^{n} )\]
	given by the multiplication by  $f_{i_1}\cdots f_{i_m}$ on a basis element $$e_{i_1}\wedge \cdots\wedge e_{i_m}\in \Kos^{-m}(A; f_1^{n+1},\dots,f_r^{n+1} )=\wedge^m A^{\oplus r}.$$ Then for an object $K\in D(A)$, we have 
	\begin{equation}\label{E:derived completion}
		\widehat{K}=R\lim_n \big(K\otimes_{A}^L \Kos(A; f_1^n,\dots, f_r^n)\big). \end{equation}
	Note that, in general, the canonical map  $\widehat{K}\to R\lim_n(K\otimes_A^LA/J^n)$ is not an isomorphism in $D(A)$. However, we will see later (Prop. \ref{P:completely-flat}) that  this indeed holds  in an important special case.

	\begin{lemma}\label{L:completion}
		For an object $K$ of $D(A)$ and an integer $n\geqslant 1$, the canonical maps  
		\begin{align*}K\otimes^L\Kos(A;f_1^{n},\dots, f_r^n)&\xra{\sim}\widehat{K}\otimes^L\Kos(A;f_1^{n},\dots, f_r^n), \\ 
			K\otimes^L_AA/J^n&\xra{\sim}\widehat{K}\otimes_A^LA/J^n\end{align*}
		are  isomorphisms.
	\end{lemma}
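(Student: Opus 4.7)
The plan rests on two observations that apply uniformly to both isomorphisms. First, for any perfect complex $E$ of $A$-modules, the functor $-\otimes_A^L E$ commutes with all homotopy limits, since it is representable as $R\Hom(E^\vee, -)$ for the dual complex $E^\vee$. Second, any complex in $D(A)$ whose cohomology groups are each annihilated by some power of $J$ is automatically derived $J$-complete: derived $J$-completeness is stable under extensions, and any $J$-power torsion module $N$ satisfies $R\Hom(A_f, N) = 0$ for all $f \in J$ because multiplication by $f$ is nilpotent on $N$.

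For the Koszul statement, set $E_n := \Kos(A; f_1^n, \dots, f_r^n)$. This complex is perfect (a bounded complex of finite free modules), and its cohomology is killed by $J^{rn}$, so for any $K \in D(A)$ the tensor product $K \otimes^L E_n$ is derived $J$-complete by the second observation. Combining the first observation with the defining formula (0.1.1) for the completion,
\[
\widehat K \otimes^L E_n \;=\; R\lim_m(K \otimes^L E_m) \otimes^L E_n \;=\; R\lim_m(K \otimes^L E_n) \otimes^L E_m \;=\; \widehat{K \otimes^L E_n},
\]
where the middle equality commutes $\otimes^L E_n$ into the $R\lim_m$ (using perfectness) and swaps the order of the two Koszul factors. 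Since $K \otimes^L E_n$ is already derived complete, its canonical map to its completion is an isomorphism; a short check using the universal property of completion shows that this identification is exactly the map induced by $K \to \widehat K$.

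The $A/J^n$ statement is then obtained by bootstrapping from the Koszul case. Let $C$ denote the cofiber of $K \to \widehat K$. Idempotence of the completion functor gives $\widehat C = 0$, and applying the just-proved Koszul isomorphism to $C$ yields $C \otimes^L E_m = 0$ for every $m$. Now $C \otimes^L A/J^n$ has all cohomology killed by $J^n$, so by the second observation it is derived complete. Using formula (0.1.1) once more,
\[
\widehat{C \otimes^L A/J^n} \;=\; R\lim_m (C \otimes^L A/J^n) \otimes^L E_m \;=\; R\lim_m (C \otimes^L E_m) \otimes^L A/J^n \;=\; 0,
\]
and a derived complete object with vanishing completion must itself be zero. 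Hence $C \otimes^L A/J^n = 0$, which via the distinguished triangle $K \otimes^L A/J^n \to \widehat K \otimes^L A/J^n \to C \otimes^L A/J^n$ gives the required isomorphism.

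The main technical step is the commutation of $-\otimes^L E_n$ with $R\lim_m$, which is precisely where perfectness of the Koszul complex is essential; the remainder is a clean application of the derived completion formalism together with the dictionary between $J$-power torsion cohomology and derived completeness.
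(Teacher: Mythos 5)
Your proof is correct, and the key mechanism matches the paper's: the cohomology of $K\otimes^L \Kos(A;f_1^n,\dots,f_r^n)$ is uniformly $J$-power torsion, hence the complex is already derived $J$-complete, and the perfectness of the Koszul complex lets completion pass through the tensor. Where you diverge is in the second isomorphism. The paper deduces the $A/J^n$ case in one stroke by base change along the canonical map $\Kos(A;f_1^n,\dots,f_r^n)\to A/J^n$ (using the DG-algebra structure of the Koszul complex, so that $A/J^n$ is a Koszul-module and $(K\otimes^L_A\Kos_n)\otimes^L_{\Kos_n}A/J^n\simeq K\otimes^L_AA/J^n$). You instead pass to the cofiber $C$ of $K\to\widehat K$, note $\widehat C=0$, and kill $C\otimes^L A/J^n$ by combining derived completeness (from torsion cohomology) with $R\lim$-commutation. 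Your route is somewhat more elementary, avoiding the DGA formalism entirely, at the cost of a second pass through the completion formula.

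One small remark: in the Koszul half you establish an abstract identification $\widehat K\otimes^L E_n\simeq K\otimes^L E_n$ and defer the verification that it is the canonical map to ``a short check using the universal property.'' That check is real work. It would be cleaner to apply your own cofiber argument there too: with $C$ as above, $C\otimes^L E_n$ is derived complete, and
\[
\widehat{C\otimes^L E_n}\;=\;R\lim_m\bigl(C\otimes^L E_m\bigr)\otimes^L E_n\;=\;\widehat C\otimes^L E_n\;=\;0
\]
by perfectness of $E_n$, forcing $C\otimes^L E_n=0$ directly. This removes the deferred check, treats both halves of the lemma identically, and makes the dependence on perfectness appear exactly once, as the step that commutes $\otimes^L E_n$ past $R\lim_m$. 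As a secondary caveat, your phrase ``annihilated by some power of $J$'' should be read as bounded $J$-torsion (each $H^i$ killed by a fixed power of $J$), which is indeed what holds here; unbounded $J$-power torsion modules need not be derived complete.
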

	\begin{proof}
		Since the cohomology groups of $\Kos(A; f_1^n,\dots,f_r^n )$ are annihilated by $(f_1^n,\cdots, f_r^n)$ and hence by $J^{(n-1)r+1}$, it follows from  \cite{stacks-project}*{\href{https://stacks.math.columbia.edu/tag/091W}{Tag 091W}} that $K\otimes^L_A\Kos(A; f_1^n,\dots,f_r^n )$ is already derived $J$-complete. Hence, we have 
		\[K\otimes^L_A\Kos(A; f_1^n,\dots,f_r^n )=(K\otimes^L_A\Kos(A; f_1^n,\dots,f_r^n ))^{\wedge}\simeq \widehat{K}\otimes^L_A\Kos(A; f_1^n,\dots,f_r^n ).\]
		The Lemma follows by further base change via the canonical map $\Kos(A; f_1^n,\dots,f_r^n )\to A/J^n$.
	\end{proof}
	
	A complex of $A$-modules $K$ is called \emph{$J$-completely flat} (resp. \emph{$J$-completely faithfully flat}) if $K\otimes^L_AA/J$ is concentrated in degree $0$ and  is a flat (resp. faithfully flat) $A/J$-module. An object $K\in D(A)$ is called \emph{$J$-completely locally free of finite type} if it is $J$-completely flat and $K\otimes_A^L A/J$ is a  locally free $A/J$-module of finite type.  It is clear that $J$-complete flatness is stable under (derived) base change in $(A,J)$, and  by   Lemma~\ref{L:completion} an object $K\in D(A)$ is $J$-completely flat if and only if  so is $\widehat{K}$.

	\begin{lemma}\label{L:flat-local}
		Let $A\to B$ be a $J$-completely faithfully flat map of rings.

		\begin{enumerate}
			\item An object  $K$ of $D(A)$ is $J$-completely flat (or $J$-completely locally free of finite type) if and only if so is $K_B:=(K\otimes^L_AB)^{\wedge}$.
			
			\item Let  $\phi: M\to N$ be a morphism in $D(A)$ with $M,N$ derived  $J$-complete. Then $\phi$ is a  quasi-isomorphism if and only if so is $\phi_B: M_B\to N_B$.
		\end{enumerate}
	\end{lemma}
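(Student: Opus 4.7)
The plan is to reduce both statements to the situation modulo $J$, where ordinary faithfully flat descent applies. The main tool is Lemma~\ref{L:completion}, which lets us replace derived $J$-completed tensor products with ordinary derived tensor products against $A/J$ (or $B/J$). For both parts, I would exploit the calculation
\[
K_B \otimes^L_B B/J \;\simeq\; (K\otimes^L_A B)^{\wedge}\otimes^L_B B/J \;\simeq\; K\otimes^L_A B/J \;\simeq\; (K\otimes^L_A A/J)\otimes^L_{A/J}B/J,
\]
where the second isomorphism uses Lemma~\ref{L:completion} and the third uses associativity of the derived tensor product. By hypothesis on $B$, the ring map $A/J\to B/J$ is honestly faithfully flat, so the last derived tensor product agrees with the ordinary one.

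For part (1), I would first show the ``if'' direction. Assuming $K_B$ is $J$-completely flat, the above display tells us that $(K\otimes^L_A A/J)\otimes_{A/J} B/J$ is concentrated in degree $0$ and flat over $B/J$. Faithful flatness of $A/J\to B/J$ then forces $K\otimes^L_A A/J$ itself to be concentrated in degree $0$: one checks that for each $i>0$, the module $H^{-i}(K\otimes^L_A A/J)$ vanishes after base change to $B/J$, hence vanishes. Once we know the complex $K\otimes_A A/J$ lives in degree $0$, flatness (resp.\ locally free of finite type) descends from $B/J$ to $A/J$ by standard fpqc descent. The ``only if'' direction is the easy one: $J$-complete flatness is preserved under any derived base change, which I would check directly from the definition and Lemma~\ref{L:completion}.

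For part (2), I would form the cofiber $C=\mathrm{cone}(\phi)$ in $D(A)$. Since the derived $J$-complete objects form a triangulated subcategory of $D(A)$, the complex $C$ is again derived $J$-complete, and $\phi$ is a quasi-isomorphism if and only if $C=0$. Similarly $\phi_B$ is a quasi-isomorphism if and only if $C_B=\mathrm{cone}(\phi_B)=(C\otimes^L_A B)^{\wedge}=0$. By derived Nakayama (see \cite{stacks-project}*{\href{https://stacks.math.columbia.edu/tag/0G1U}{Tag 0G1U}}), a derived $J$-complete complex is zero if and only if its derived reduction modulo $J$ vanishes. Applying the displayed identity above with $K$ replaced by $C$, we see that $C_B\otimes^L_B B/J=(C\otimes^L_A A/J)\otimes_{A/J}B/J$, and faithful flatness of $A/J\to B/J$ shows that this vanishes if and only if $C\otimes^L_A A/J$ does. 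Combining these equivalences yields the claim.

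The only slightly delicate point is the descent of ``locally free of finite type'' along the faithfully flat map $A/J\to B/J$ in part~(1); everything else reduces almost formally to derived Nakayama and the calculation above. I would handle that step by invoking standard fpqc descent for finitely presented modules, once flatness and finite generation have been propagated back to $A/J$ via faithful flatness.
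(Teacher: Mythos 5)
Your proposal is correct and follows essentially the same line as the paper's proof: the key base-change identity $K_B\otimes^L_B B/JB\simeq(K\otimes^L_A A/J)\otimes^L_{A/J}B/JB$ via Lemma~\ref{L:completion}, fpqc descent modulo $J$ for part~(1), and the cone construction plus derived Nakayama for part~(2). The paper's write-up is terser, but there is no substantive difference in approach.
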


	\begin{proof}
		(1)	By Lemma~\ref{L:completion}, we have    
		\begin{equation}\label{E:flat base change} K_B\otimes^L_BB/JB=K\otimes^L_A B/JB=(K\otimes_A^LA/J)\otimes_{A/J}^L B/JB.\end{equation}
		Statement (1) follows immediately from the usual fpqc-descent of  modules via the faithfully flat map  $A/J\to B/JB$.
		
		(2) Pick a distinguished triangle $M\xra{\phi}N \to K\to M[1]$ in $D(A)$, from which we deduce a distinguished triangle in $D(B)$: 
		$M_B\xra{\phi_B} N_B\to K_B\to M_B[1].$ We need to prove that $K=0$ if and only if $K_B=0$. In view of  \eqref{E:flat base change} and the faithful flatness of $B/JB$ over $A/J$, one has $K\otimes_A^LA/J=0$ if and only if $K_B\otimes^L_BB/JB=0$. Then we conclude by the derived Nakayama Lemma \cite[\href{https://stacks.math.columbia.edu/tag/0G1U}{Tag 0G1U}]{stacks-project}.
	\end{proof}

	\if false

	\begin{lemma}\label{L:bounded-torsion}
		Let $f\in A$.  Assume that $A$ has bounded $f^{\infty}$-torsion, i.e. there exists an integer $c\geqslant 1 $ such that $A[f^{\infty}]=A[f^c]$. Let $N$ be a classically $f$-adically complete $A$-module such that $N/f^nN$ is a (faithfully) flat $A/f^nA$-module for any integer $n\geqslant 1$. Then $N$ is a  $f$-completely (faithfully) flat and derived $f$-complete $A$-module.
	\end{lemma}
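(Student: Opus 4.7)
The plan is to establish the two assertions separately: derived $f$-completeness of $N$, and $f$-complete (faithful) flatness.

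Derived $f$-completeness follows from classical $f$-adic completeness. Indeed, $N\simeq\varprojlim_n N/f^nN$, and since the transition maps are surjective, $R^1\varprojlim$ vanishes, so $N\simeq R\varprojlim_n N/f^nN$ in $D(A)$. Each $N/f^nN$ is killed by $f^n$ and hence derived $f$-complete; since $R\varprojlim$ preserves derived $f$-completeness, $N$ is derived $f$-complete as well. This is \cite{stacks-project}*{\href{https://stacks.math.columbia.edu/tag/091U}{Tag 091U}}.

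For the $f$-complete (faithful) flatness, the hypothesis with $n=1$ already provides that $N/fN$ is (faithfully) flat over $A/f$, so the remaining content is to show $N\otimes_A^L A/f$ is concentrated in degree $0$. I would proceed by establishing the stronger claim that, for every $n\geqslant 1$, the derived tensor product $N\otimes_A^L A/f^n$ is concentrated in degree $0$ and equals $N/f^nN$. Granting this, transitivity of $\otimes^L$ together with the hypothesized flatness of $N/f^nN$ over $A/f^n$ yields
\[ N\otimes_A^L A/f \simeq (N\otimes_A^L A/f^n)\otimes_{A/f^n}^L A/f \simeq (N/f^nN)\otimes_{A/f^n} A/f \simeq N/fN, \]
which is exactly what is needed.

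To prove the intermediate claim, I would exploit the bounded torsion assumption. For $n\geqslant c$, we have $A[f^n]=A[f^c]$, yielding the four-term exact sequence $0\to A[f^c]\to A\xrightarrow{f^n}A\to A/f^n\to 0$, which in $D(A)$ amounts to a distinguished triangle $A[f^c][1]\to\Kos(A;f^n)\to A/f^n$. Tensoring with $N$ (derived $f$-complete) and invoking Lemma \ref{L:completion} to identify $N\otimes_A^L\Kos(A;f^n)\simeq [N\xrightarrow{f^n}N]$ produces a long exact sequence relating $\mathrm{Tor}_i^A(N,A/f^n)$, $\mathrm{Tor}_i^A(N,A[f^c])$, and the cohomology groups $N[f^n]$ and $N/f^nN$ of the Koszul complex. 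The main obstacle will be controlling $\mathrm{Tor}_i^A(N,A[f^c])$ and $N[f^n]$: one would exploit the decomposition $(N/f^mN)[f^k]\simeq (A/f^m)[f^k]\otimes_{A/f^m}N/f^mN$ coming from flatness of $N/f^mN$ over $A/f^m$ at each finite level $m$, combined with the description $(A/f^m)[f^k]=(f^{m-k}A+A[f^c])/f^mA$ for $m\geqslant\max(k,c)$. Taking limits as $m\to\infty$ against the classical $f$-adic completeness of $N$ should then force $f$-torsion elements of $N$ into $\bigcap_m f^mN=0$ modulo the controlled contribution of $A[f^c]\cdot N$, after which the residual Tor groups cancel.
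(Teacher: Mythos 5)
Your derived $f$-completeness argument is fine. The issue is in the flatness part, where the key step is left unjustified and, as formulated, runs into a circularity.

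You propose to show that $N\otimes_A^L A/f^n$ is concentrated in degree $0$ by tensoring the truncation triangle $A[f^c][1]\to\Kos(A;f^n)\to A/f^n$ (for $n\geqslant c$) with $N$. Unwinding the long exact sequence, this requires \emph{three} things: (a) the natural map $N\otimes_A A[f^c]\to N[f^n]$ is surjective, (b) it is injective, and (c) $\mathrm{Tor}_j^A(N,A[f^c])=0$ for all $j\geqslant 1$. Your sketch only gestures at (a), via ``should then force $f$-torsion elements into $\bigcap f^m N$,'' and says nothing concrete about (b) or (c). Moreover, the natural way to control $\mathrm{Tor}_j^A(N,A[f^c])$ (change of rings along $A\to A/f^c$) would need $N\otimes_A^L A/f^c$ to be concentrated in degree $0$ and flat over $A/f^c$ — which is exactly what you are trying to prove. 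So ``the residual Tor groups cancel'' is not a step that follows from what precedes it; it is the heart of the matter, and it is missing.

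The paper avoids this entirely by a reduction: by \cite{BMS2}*{Lemma 4.7} (applied with $p$ replaced by $f$), once you know $N$ is classically $f$-complete with each $N/f^nN$ flat over $A/f^nA$, it suffices to show $N$ has bounded $f^\infty$-torsion. To prove that, pass to $A'=A/A[f^c]$, which is $f$-torsion free, and put $N'=N\otimes_A A'$. The right-exact sequence $A[f^c]\otimes_A N\to N\to N'\to 0$ shows $N[f^\infty]$ maps into the image of $A[f^c]\otimes_A N$, which is killed by $f^c$; so it is enough to show $N'$ is $f$-torsion free. Since $A'$ is $f$-torsion free one has short exact sequences $0\to A'/f^{n-1}\xrightarrow{f}A'/f^n\to A'/f\to 0$, and flatness of $N'/f^nN'$ over $A'/f^n$ (inherited from the hypothesis on $N$) lets you tensor to get $0\to N'/f^{n-1}\xrightarrow{f}N'/f^n\to N'/f\to 0$. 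A diagram chase then puts any $f$-torsion element of $N'$ into $f^{n-1}N'$ for all $n$, hence it is $0$ by completeness. The crucial manoeuvre — killing the problematic torsion of the base by passing to $A'$ so that one can work with a torsion-free ring — is exactly what your approach lacks: you try to do the Tor bookkeeping directly over $A$ with its torsion, and the accounting does not close.
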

	\begin{proof}
		By the same proof as \cite[Lemma 4.7]{BMS2} (with $p$ replaced by $f$), it suffices to show that $M$ has bounded $f^{\infty}$-torsion. By assumption, there exists an integer $c\geqslant 1$ such that $A[f^{\infty}]=A[f^c]$. Put $A'=A/A[f^c]$, which is $f$-torsion free. Put  $N'=N\otimes_AA'$. Then by the exact sequence
		\[A[f^c]\otimes_A N\to N\to N'\to 0,\] it is enough to show that $N'$ is $f$-torsion free. For any integer $n\geqslant 1 $, let $A_n'=A'/f^nA'$ and $N'_n=N'/f^nN'$.   By the $f$-torsion freeness of $A'$, we have an exact sequence 
		\[0\to A'_{n-1}\xra{\times f}A'_n\to A'_1\to 0.\]
		Note that the flatness of $N/f^nN$ over $A/f^nA$ implies the flatness of  $N'_n$ over $A'_n$. Tensoring with $N'_n$, we get  
		\[0\to N'_{n-1}\xra{\times f} N'_n\to N'_1\to 0.\]
		Now let $x\in N'$ with $fx=0$. Then the above exact sequence implies that $x\in f^{n-1}N'$ for any integer $n\geqslant1 $. But $N'$ is $f$-adically complete, we get thus  $x=0$. This finishes the proof of the Lemma.
	\end{proof}

	\fi
	The following definition is motivated by the notion of bounded prism \cite[Def. 3.2]{BS}. 
	\begin{definition}\label{D:prism-type}
		
		A  pair $(A, J)$ is of  \emph{reduced prismatic type } if $J=(f)$ for some $f\in A$,  $A$ is derived $J$-complete and has bounded $f^{\infty}$-torsion, i.e. there exists an integer $c\geqslant 0$ such that $A[f^{c}]=A[f^{\infty}]$.

		A pair $(A,J)$ is of  \emph{prismatic type} if $A$ is derived $J$-complete, where  $J\subset A$ is an ideal of the form $J=(I, f)$ such that 
		\begin{itemize} 
			\item $I\subset A$ is a finitely generated ideal that defines a Cartier divisor of $\Spec(A)$,
			\item $I^r$ is a  principal ideal for some integer $r\geqslant 1$,
			\item and $f\in A$ is  a nonzero divisor  such that $A/I$ has bounded $f^{\infty}$-torsion.
		\end{itemize}

	\end{definition}

\begin{remark}\label{R:prismatic-reduction}
By \cite[Lemma  3.6]{BS}, a bounded prism   is a pair of prismatic type in the  sense above. If $(A,J)$ is a pair of prismatic type with $J=(I, f)$ as in the definition, then $(A,J_r)$ with $J_r=(I^r,f)$ is also a pair of  prismatic type.  Moreover,  as $J^r\subset J_r\subset J$,  the derived  completion and the classical adic completion with respect to  $J$ agree with those with respect to   $J_r$ (see \cite[\href{https://stacks.math.columbia.edu/tag/091S}{Tag 091S}]{stacks-project}). This allows us to reduce many arguments below to the case of principal  $I$, i.e. $J=(\xi, f)$ for  two nonzero divisors $\xi, f\in A$ such that    $A/(\xi)$ has  bounded $f^{\infty}$-torsion. 
	\end{remark}

	\begin{proposition}\label{P:completely-flat}
		Let  $(A,J)$ ba  a pair of (reduced)  prismatic type. 
		
		\begin{enumerate}
			\item For an object $K\in D(A)$, the canonical map 
			\[\widehat{K}\xra{\sim}R\lim_n (K\otimes_{A}^L A/J^n)\]
			deduced from the universal property  of $\widehat{K}$ is an isomorphism.
			
			\item If $M$ is a $J$-completely (faithfully) flat complex of $A$-modules, then $\widehat{M}$ is concentrated in degree $0$ and a classically $J$-adically complete $A$-module such that $\widehat{M}/J^n\widehat{M}$ is (faithfully) flat over $A/J^n$. In particular, $A$ is classically $J$-complete. 
			
			\item Conversely, if $N$ is a classically $J$-adically complete $A$-module such that $N/J^nN$ is (faithfully) flat over $A/J^n$, then $N$ is a $J$-completely (faithfully) flat and derived $J$-complete object in $D(A)$. 
		\end{enumerate}
	\end{proposition}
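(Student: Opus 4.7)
The plan is to use Remark~\ref{R:prismatic-reduction} to reduce to the case $J=(\xi,f)$ with $\xi,f$ nonzero divisors in $A$ and $A/\xi$ of bounded $f^\infty$-torsion with some bound $c$; the reduced prismatic case is the further specialization $J=(f)$.

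For (1), I start from the defining formula $\widehat K=R\lim_n(K\otimes_A^L\Kos(A;\xi^n,f^n))$ and compare the Koszul tower with $\{A/(\xi^n,f^n)\}_n$, which is cofinal to $\{A/J^n\}_n$. Since $\xi$ is a nonzero divisor, $\Kos(A;\xi^n,f^n)\simeq\Kos(A/\xi^n;f^n)$, and its only non-$H^0$ cohomology is $H^{-1}=(A/\xi^n)[f^n]$. The distinguished triangle $H^{-1}[1]\to\Kos(A;\xi^n,f^n)\to A/(\xi^n,f^n)$ reduces (1) to $R\lim_n\bigl(K\otimes_A^L(A/\xi^n)[f^n]\bigr)=0$. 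Using the Koszul transition formulas from the excerpt, after $m$ iterations the map $(A/\xi^{n+m})[f^{n+m}]\to(A/\xi^n)[f^n]$ sends a class represented by $b\in A$ to $f^m b\bmod\xi^n$. Given $b$ with $f^{n+m}b\in(\xi^{n+m})$, an elementary recursion using bounded $f^\infty$-torsion of $A/\xi$ (repeatedly extracting factors of $\xi$: if $f^{kc}b=\xi^k b_k$, then $\bar b_k$ is $f^\infty$-torsion in $A/\xi$, hence $f^c$-torsion, so $f^c b_k\in(\xi)$ and $f^{(k+1)c}b\in(\xi^{k+1})$) yields $f^{nc}b\in(\xi^n)$. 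Hence for $m\geq nc$ the iterated transition vanishes, the pro-system is pro-zero, and its derived limit is zero.

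For (2), I prove by induction on $n$ that $M\otimes_A^L A/J^n$ sits in degree $0$ and is flat over $A/J^n$. Given the base case $n=1$, applying $M\otimes_A^L(-)$ to $0\to J^n/J^{n+1}\to A/J^{n+1}\to A/J^n\to 0$ produces a distinguished triangle whose leftmost term, since $J^n/J^{n+1}$ is an $A/J$-module and $M\otimes_A^L A/J$ is $A/J$-flat in degree $0$, equals $(M\otimes_A^L A/J)\otimes_{A/J}J^n/J^{n+1}$ in degree $0$. The long exact sequence places $M\otimes_A^L A/J^{n+1}$ in degree $0$, and the local flatness criterion for the nilpotent ideal $J/J^{n+1}\subset A/J^{n+1}$ yields $A/J^{n+1}$-flatness from $\mathrm{Tor}_1^{A/J^{n+1}}(M\otimes_A^L A/J^{n+1},A/J)=H^{-1}(M\otimes_A^L A/J)=0$. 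Combining with (1), $\widehat M=R\lim_n M/J^nM$, which sits in degree $0$ and equals the classical completion by Mittag-Leffler for the surjective system $\{M/J^nM\}$; moreover $\widehat M/J^n\widehat M=M/J^nM$ inherits $A/J^n$-flatness. The faithfully flat variant is automatic, since faithful flatness over $A/J^n$ is detected modulo the nilpotent ideal $J/J^n$.

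For (3), derived $J$-completeness of $N$ is automatic because $J$ is finitely generated. The rest reduces to two claims: $N$ is $\xi$-torsion free, and $N/\xi N$ is $\bar f$-completely flat over $A/\xi$ (with $\bar f$ the image of $f$). Granted these, $N\otimes_A^L A/J\simeq(N/\xi N)\otimes_{A/\xi}^L(A/\xi)/(\bar f)$ sits in degree $0$ and is flat over $A/J$, yielding $\mathrm{Tor}_i^A(N,A/J)=0$ for $i\geq 1$. For $\xi$-torsion freeness, fix $x\in N[\xi]$ and put $R=A/J^n$; the flatness of $N/J^nN$ over $R$ gives $(N/J^nN)[\xi]=R[\xi]\otimes_R N/J^nN$. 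A direct computation in the spirit of the recursion in (1), writing $\xi a=\xi\cdot(\text{element of }J^{n-1})+f^n b_{0n}$ with $\bar b_{0n}\in(A/\xi)[f^c]$ and cancelling $\xi$, shows $R[\xi]\subset J^{n-c}/J^n$. Hence $\bar x\in J^{n-c}N/J^nN$, so $x\in J^{n-c}N$, and letting $n\to\infty$ gives $x\in\bigcap_k J^kN=0$ by classical separatedness. For the $\bar f$-complete flatness of $N/\xi N$, one is in the reduced prismatic situation $(A/\xi,(\bar f))$: $(N/\xi N)/\bar f^n\cong N/(\xi,f^n)N$ is flat over $A/(\xi,f^n)=(A/\xi)/\bar f^n$ by flat base change, and the one-variable analogue of the arguments for (1) and (2) applies. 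The main obstacle I anticipate is the bookkeeping in the $\xi$-torsion analysis, since the interplay between the bounded $f^\infty$-torsion of $A/\xi$ and the flatness of $N/J^nN$ over $A/J^n$ must be leveraged precisely to confine $R[\xi]$ in a sufficiently deep power of $J/J^n$.
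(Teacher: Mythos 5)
Parts~(1) and (2) track the paper's proof: for (1) the paper organizes the limit as $R\lim_n R\lim_m(K\otimes^L\Kos(A;\xi^n,f^m))$ and cites a Stacks Project tag for the pro-isomorphism $\{\Kos(A/\xi^n;f^m)\}_m\cong\{A/(\xi^n,f^m)\}_m$, while you work on the diagonal and make the pro-zero computation explicit; for (2) the paper defers to the proof of Lemma~3.7(2) of Bhatt--Scholze, which your induction reproduces. For (3), your proof that $N$ is $\xi$-torsion free --- via the inclusion $(A/J^n)[\xi]\subset J^{n-c}/J^n$, transferred by flatness to $(N/J^nN)[\xi]\subset J^{n-c}N/J^nN$, and separatedness of $N$ --- is correct and a little cleaner than the paper's route (which shows $\{N_n[\xi]\}_n$ is essentially null and takes inverse limits of a four-term exact sequence).

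There is, however, a genuine gap in the final step of (3). To invoke the reduced-prismatic case of (3) for $(A/\xi,(\bar f))$ and $M:=N/\xi N$, you must know that $M$ is \emph{classically} $\bar f$-adically complete, and you never verify this. The phrase ``the one-variable analogue of the arguments for (1) and (2) applies'' is also not a proof of the reduced base case of (3): that case is not a corollary of (1) and (2), and the paper devotes a full paragraph to it (passing to $N'=N\otimes_A A/A[f^c]$, showing $N'$ is $f$-adically complete --- which uses the classical completeness of the input --- and then $f$-torsion free). The missing completeness does follow from the inclusion you already derived: $(N/J^nN)[\xi]\subset J^{n-c}N/J^nN$ shows the pro-system $\{(N/J^nN)[\xi]\}_n$ is pro-zero, and taking $\varprojlim$ along the exact sequences $0\to(N/J^nN)[\xi]\to N/J^nN\xrightarrow{\xi}N/J^nN\to N/(\xi,f^n)N\to 0$ produces $N/\xi N\cong\varprojlim_n N/(\xi,f^n)N$; equivalently, the inclusion shows $\xi N$ is closed in $N$ for the $J$-adic topology, so the quotient $N/\xi N$ is complete. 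You need to add one of these arguments (plus an actual treatment of the reduced base case) before the proof is complete.
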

	\begin{proof}
		
		For (1), we will treat only the case when $(A,J)$ is a  pair of prismatic type, the arguments for the case of reduced prismatic type being  similar and much simpler. 
	As explained in Remark~\ref{R:prismatic-reduction}, we may assume that $J=(\xi, f)$ such that $\xi, f\in A$ are nonzero divisor and $A/\xi A$ has bounded $f^{\infty}$-torsion.
		
		The arguments follows from the same line as  \cite[Lemma 3.7(1)]{BS}. By \eqref{E:derived completion} and a simple cofinality argument, we have 
		\[\widehat{K}=R\lim_nR\lim_m (K\otimes^L_A\Kos(A; \xi^n,f^m)).\] 
		Since $\xi$ is a nonzero divisor, we have a quasi-isomorphism 
		\[\Kos(A; \xi^n,f^m)\simeq \Kos(A/\xi^n; f^m).\]
		By d\'evissage, the assumption that $A/\xi A$ has bounded $f^{\infty}$-torsion implies that so does $A/\xi^nA$, which in turn implies that the pro-object  $\{\Kos(A/\xi^nA; f^m):m\geqslant 1\}$ is isomorphic to $\{A/(\xi^n,f^m):m\geqslant 1 \}$ by  \cite[\href{https://stacks.math.columbia.edu/tag/091X}{Tag 091X}]{stacks-project}. Hence, we get   
		\begin{align*}
			\widehat{K}&=  R\lim_nR\lim_m (K\otimes^L_A\Kos(A; \xi^n,f^m))\\
			&\simeq R\lim_n R\lim_m (K\otimes^L_A\Kos(A/\xi^n; f^m))\\
			&\simeq R\lim_n R\lim_m (K\otimes_A^LA/(\xi^n,f^m))\\
			&\simeq R\lim_n(K\otimes^L_AA/J^n),
		\end{align*}
		where the last step follows from a cofinality argument.
		
		Statement (2) follows from the same argument as \cite[Lemma 3.7(2)]{BS}. 
		
		For (3), we consider first the case when $(A,J)$ is a pair of reduced prismatic type. We write thus $J=(f)$ with $f\in A$ such that $A$ has bounded $f^{\infty}$-torsion. By the same proof as \cite{BMS2}*{Lemma 4.7} (with $p$ replaced by $f$), it suffices to show that $N$ has bounded $f^{\infty}$-torsion. By assumption, there exists an integer $c\geqslant 1$ such that $A[f^{\infty}]=A[f^c]$. Put $A'=A/A[f^c]$, which is $f$-torsion free. Put  $N'=N\otimes_AA'$. 
		Then by the exact sequence
		\[A[f^c]\otimes_A N\to N\to N'\to 0,\] it is enough to show that $N'$ is $f$-torsion free.  We prove first $N'$ is  $f$-adically complete.
		For any integer $n\geqslant 1 $, let $A_n'=A'/f^nA'$, $N_n=N/f^nN$ and $N'_n=N'/f^nN'$.   
		As $A'$ is $f$-torsion free, we have an exact sequence 
		\[0\to A[f^{c}]\to A_n\to A_n'\to 0\]
		for each $n\geqslant c$. Tensoring with $N_n$ over $A_n$, we get 
		\[0\to A[f^c]\otimes_{A_n}N_n\to N_n\to N_n'\to 0\]
		where the left exactness follows from the flatness of $N_n$ over $A_n$. Note that  $ A[f^c]\otimes_{A_n}N_n\cong A[f^c]\otimes_AN$ is independent of $n$ for $n\geqslant c$. Taking inverse limits, one gets thus 
		\[0\to A[f^c]\otimes_AN\to N\to \varprojlim_nN_n'\to 0,\]
		and hence $N'=\varprojlim_n N'_n$ is complete. 	
		Now note that  the $f$-torsion freeness of $A'$ implies  an exact sequence 
		\[0\to A'_{n-1}\xra{\times f}A'_n\to A'_1\to 0.\]
		Note that the flatness of $N/f^nN$ over $A/f^nA$ implies the flatness of  $N'_n$ over $A'_n$. Tensoring with $N'_n$, we get  
		\[0\to N'_{n-1}\xra{\times f} N'_n\to N'_1\to 0.\]
		Now let $x\in N'$ with $fx=0$. We get then  $x\in f^{n-1}N'$ for any integer $n\geqslant1 $, and thus $x=0$ by the completeness of $N'$. This finishes the proof of (3) in the case of reduced prismatic type.

		Assume now that $(A,J)$ is of prismatic type.  By Remark~\ref{R:prismatic-reduction} ,  we may assume that $J=(\xi, f)$ such that $\xi, f\in A$ are nonzero divisor and $A/\xi A$ has bounded $f^{\infty}$-torsion.
	Let $c\geqslant 0$ be an integer such that $(A/\xi A)[f^{\infty}]=(A/\xi A)[f^c]$. For any integer $n\geqslant 1$, we put 
		$A_n=A/(f^n,\xi^n)$ and $N_n=N\otimes_AA_n$. 
		Let $\bar x\in A_n[\xi]$ with a lift $ x\in A$. Then one has $\xi x=\xi^ny+f^n z$ for some $y,z \in A$. By our choice of $c$, there exists $z_1\in A$ such that $f^c z=\xi z_1$. If $n\geqslant c$, one gets thus $\xi x=\xi^ny +f^{n-c}\xi z_1$ and hence $x=\xi^{n-1}y+f^{n-c}z_1$ since $\xi$ is a nonzero divisor.  It follows that  $A_n[\xi]\subset \xi^{n-1}A_n+ f^{n-c}A_n$ for all $n\geqslant  c$.
		Since $N_n$ is flat over $A_n$ by assumption, one has \[N_n[\xi]=A_n[\xi]\otimes_{A_n}N_n\subset  ( \xi^{n-1}A_n+ f^{n-c}A_n)\otimes_{A_n}N_n= \xi^{n-1}N_n+f^{n-c}N_n\]
		for all $n\geqslant c$. 
		Now consider the exact sequence
		\[0\to N_n[\xi]\to N_n\xra{\times \xi}N_n\to N/(\xi, f^n)N\to 0\]
		for all $n\geqslant1$.
		Note that the canonical map $N_{n+c+1}[\xi]\to N_n[\xi]$ is zero, hence the inverse system $(N_n[\xi])_n$ is essentially null. Taking inverse limits, one gets a short exact sequence 
		\[0\to N\xra{\times \xi}N\to \varprojlim_nN/(\xi, f^n)N\to 0.\]
		It follows that $N$ is $\xi$-torsion free and $N/\xi N $ is $f$-adically complete. 
		Note that  
		$N\otimes_A^L A/\xi A=N/\xi N$, and hence   
		\[N\otimes_A^L A/J=(N\otimes^L_A A/\xi A)\otimes^L_{A/\xi A} A/J=(N/\xi N)\otimes_{A/\xi A}^L A/J.\]  
		The statement (3) then follows immediately the case of reduced prismatic type applied to $N/\xi N$.

	\end{proof}

	Let $\Mod^{\wedge}_{J}(A)$ denote the subcategory of $\Mod(A)$ consisting of derived $J$-complete $A$-modules. Let   $M, N$ be objects of $\Mod^{\wedge}_J(A)$. Then  $\Hom_A(M,N)$ is also derived $J$-complete  by \cite[\href{https://stacks.math.columbia.edu/tag/0A6E}{Tag 0A6E}]{stacks-project}.  We put 
	\begin{equation}\label{E:completed tensor}
		M\widehat{\otimes}_AN:=H^0((M\otimes_A^L N)^{\wedge}).\end{equation}
	Note that  the functor $M\mapsto M\widehat{\otimes}_AN$ is right exact  (cf.  \cite[\href{https://stacks.math.columbia.edu/tag/0AAJ}{Tag 0AAJ}]{stacks-project}) and   we have  (cf. \cite[Appendix]{chatz})
	\[\Hom_A(L\widehat{\otimes}_AM, N)=\Hom_A(L, \Hom_A(M,N))\] 
	for all objects $L,M,N\in \Mod^{\wedge}_J(A)$.
	There is a canonical surjection  from  $M \widehat{\otimes}_AN$ to the classical  $J$-adic completion of $M\otimes_AN$, which is in general not an isomorphism.  Let $\FMod^{\wedge}_J(A)$ be the full subcategory of $\Mod^{\wedge}_J(A)$ consisting of $J$-completely flat $A$-modules.


	Let $\phi: A\to B$ be a  derived $J$-complete $A$-algebra. We denote by \begin{equation}\label{E:base change functor}\phi^*: \Mod_J^{\wedge}(A)\to \Mod_{JB}^{\wedge}(B)
	\end{equation} 
	the base change functor $M\mapsto M\widehat{\otimes}_{A}B$. Then $\phi^*$ is the left adjoint of the functor of  restriction of scalars $\Mod_{JB}^{\wedge}(B)\to \Mod_J^{\wedge}(A)$.

	\begin{lemma}
		Let $\phi: A\to B$ and $\psi: B\to C$ be morphisms of derived $J$-complete $A$-algebras. Then for any $M\in \Mod^{\wedge}_{J}(A)$, there is  a canonical isomorphism 
		\[\psi^*\phi^*(M)\simeq (\psi\circ \phi)^*(M).\]
	\end{lemma}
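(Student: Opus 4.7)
The plan is to reduce the statement to formal nonsense using the adjunction recorded just before~\eqref{E:base change functor}. Since $\phi^*$ is left adjoint to the restriction-of-scalars functor $\phi_{*}\colon \Mod^{\wedge}_{JB}(B)\to \Mod^{\wedge}_{J}(A)$, and analogously for $\psi^*$ and $(\psi\circ\phi)^*$, the composite $\psi^*\circ \phi^*$ is left adjoint to $\phi_*\circ \psi_*$. But restriction of scalars is manifestly functorial in ring maps, so $\phi_*\circ \psi_*=(\psi\circ \phi)_*$. By uniqueness of left adjoints, this yields a canonical natural isomorphism $\psi^*\phi^*\simeq (\psi\circ\phi)^*$, which is exactly what is asserted.

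To make the isomorphism concrete one can unwind the adjunction: the canonical map $M\to M\widehat{\otimes}_A C$ factors through $M\widehat{\otimes}_A B$ (because the target is a derived $(JB)$-complete $B$-module and $\widehat{\otimes}_A B$ is the initial such), and the resulting $B$-linear map factors further through $(M\widehat{\otimes}_A B)\widehat{\otimes}_B C$ by the corresponding universal property over $C$, producing the comparison morphism. To check it is an isomorphism one may argue at the derived level: using Lemma~\ref{L:completion} together with the general fact that a map in $D(A)$ whose cone has trivial derived $J$-completion remains so after tensoring on the right with any object of $D(B)$ (apply derived Nakayama and the projection formula $C\otimes_A^L L\otimes_A^L A/J\simeq (C\otimes_A^L A/J)\otimes_A^L L$), one deduces $\bigl((M\otimes_A^L B)^{\wedge}\otimes_B^L C\bigr)^{\wedge}\simeq (M\otimes_A^L C)^{\wedge}$; taking $H^0$ gives the claim.

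I do not expect any real obstacle here, as the content is essentially formal; the only minor subtlety is that $M\widehat{\otimes}_A B$ is defined as the $H^0$ of a derived completion, so a purely hand-computational proof would have to be careful in commuting $H^0$ past subsequent tensor-completions. The adjunction argument above bypasses this issue entirely, which is why I would take that route in the write-up.
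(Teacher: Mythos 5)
Your proposed proof is correct, and it takes a genuinely different, more formal route than the paper does. You appeal to the adjunction $\phi^*\dashv\phi_*$ that the paper records (but does not prove) right after~\eqref{E:base change functor}, observe that $\phi_*\circ\psi_*=(\psi\circ\phi)_*$ holds on the nose, and conclude by uniqueness of left adjoints. This is watertight, and the payoff is that you never have to commute $H^0$ past a second tensor-and-complete operation. The paper instead works at the level of the derived category: it starts from the distinguished triangle $\tau_{\leqslant -1}(M\otimes^L_A B)^{\wedge}\to(M\otimes^L_A B)^{\wedge}\to\phi^*(M)$, establishes the derived identity $\bigl((M\otimes_A^L B)^{\wedge}\otimes_B^L C\bigr)^{\wedge}\simeq(M\otimes_A^L C)^{\wedge}$ via Koszul complexes and Lemma~\ref{L:completion}, and then uses the fact that $\bigl(\tau_{\leqslant -1}(M\otimes^L_A B)^{\wedge}\otimes^L_B C\bigr)^{\wedge}$ lives in degrees $\leqslant -1$ (\cite[Tag 0AAJ]{stacks-project}) to kill the boundary terms in the long exact sequence. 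The subtlety you flag at the end of your proposal --- that $\phi^*(M)$ is an $H^0$, so one cannot simply take $H^0$ of the derived identity and declare victory --- is precisely what the paper's distinguished-triangle step addresses, and your ``concrete'' sketch leaves that step genuinely incomplete as written. Since you explicitly defer to the adjunction argument for the actual write-up, this is not a gap in the proposal as a whole; but if you were to write out the hands-on version you would need the truncation argument, not merely ``taking $H^0$.'' Both routes are valid; the adjunction argument is shorter but leans on a stated-without-proof assertion, while the paper's argument is self-contained and in the process proves the useful derived-level statement $\bigl((M\otimes_A^L B)^{\wedge}\otimes_B^L C\bigr)^{\wedge}\simeq(M\otimes_A^L C)^{\wedge}$.
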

	
	\begin{proof}
		Consider the distinguished triangle \[\tau_{\leqslant -1}(M{\otimes}^{L}_{A}B)^{\wedge}\to (M{\otimes}^{L}_{A}B)^{\wedge}\to \phi^*(M)=H^0((M{\otimes}^{L}_{A}B)^{\wedge})\to .\]
		Taking the derived tensor product with $C$ and  derived $J$-completion, one gets 
		\[
		\big(\tau_{\leqslant -1}(M{\otimes}^{L}_{A}B)^{\wedge}\otimes^L_BC\big)^{\wedge}\to \big((M{\otimes}^{L}_{A}B)^{\wedge}\otimes^L_BC\big )^{\wedge}\to (\phi^*(M)\otimes_B^LC)^{\wedge}\to.
		\]
		If we write $J=(f_1,\dots, f_r)$, then one has  
		\begin{align*}
			\big((M{\otimes}^{L}_{A}B)^{\wedge}\otimes^L_BC\big )^{\wedge}&=R\lim_n \big((M\otimes_A^LB)^{\wedge}\otimes^L_B\Kos(C; f_1^n,\cdots, f_r^n)\big)\\
			&\simeq R\lim_n\big((M{\otimes}^{L}_{A}\Kos(B; f_1^n, \dots, f_r^n)\otimes_B^LC\big)\\
			&\simeq R\lim_n\big(M{\otimes}^{L}_{A}\Kos(C; f_1^n,\dots, f_r^n)\big)\\
			&=(M\otimes_A^LC)^{\wedge},
		\end{align*}
		where the second isomorphism uses Lemma~\ref{L:completion}. One deduces then
		\[H^0\big(\tau_{\leqslant -1}(M{\otimes}^{L}_{A}B)^{\wedge}\otimes^L_BC\big)^{\wedge}\to  (\psi\circ \phi)^*(M)\to \psi^*\phi^*(M)\to H^1\big(\tau_{\leqslant -1}(M{\otimes}^{L}_{A}B)^{\wedge}\otimes^L_BC\big)^{\wedge}\]
		Since  $\big(\tau_{\leqslant -1}(M{\otimes}^{L}_{A}B)^{\wedge}\otimes^L_BC\big)^{\wedge}$ is concentrated in degree $\leqslant -1$ by  \cite[\href{https://stacks.math.columbia.edu/tag/0AAJ}{Tag 0AAJ}]{stacks-project}, the Lemma follows immediately. 
	\end{proof}

	\begin{lemma}\label{L:stability flatness}
		Let  $(A,J)$ be a pair  of prismatic type (resp. of reduced prismatic type), and $\phi: A\to B$ be an $A$-algebra such that $(B, JB)$ is also of  prismatic type (resp. of  reduced prismatic type).
		
		\begin{enumerate}
			\item  If   $M,N\in \FMod^{\wedge}_J(A)$,  then $(M{\otimes}^L_AN)^{\wedge}\cong M\widehat{\otimes}_AN$ is $J$-completely flat and it coincides  with the classical $J$-adic completion of $M\otimes_AN$. 
			
			\item The base change functor $\phi^*$ sends  $\FMod^{\wedge}_{J}(A)$ to  $\FMod_{JB}^{\wedge}(B)$.

		\end{enumerate}
		
	\end{lemma}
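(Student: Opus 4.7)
The plan is to deduce both statements from Proposition~\ref{P:completely-flat} combined with Lemma~\ref{L:completion}. The common pattern is: compute the reduction modulo $J$ (respectively $JB$) of a derived tensor product using Lemma~\ref{L:completion}, observe that the flatness hypotheses force the result to be concentrated in degree $0$, and then invoke Proposition~\ref{P:completely-flat}(2) to pass from a $J$-completely flat complex to its derived completion.

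For part (1), I would first check that $M \otimes^L_A N$ is itself $J$-completely flat. By Lemma~\ref{L:completion} one has
\[
(M \otimes^L_A N) \otimes^L_A A/J \;\simeq\; (M \otimes^L_A A/J) \otimes^L_{A/J} (N \otimes^L_A A/J),
\]
and both factors on the right are flat $A/J$-modules in degree $0$ by assumption, so their tensor product is too. Proposition~\ref{P:completely-flat}(2) then yields that $(M \otimes^L_A N)^{\wedge}$ is concentrated in degree $0$ (hence equals $M \widehat{\otimes}_A N$), is classically $J$-adically complete, is $J$-completely flat, and enjoys the promised flatness modulo $J^n$. To identify $M \widehat{\otimes}_A N$ with the classical $J$-adic completion of $M \otimes_A N$, I would apply Proposition~\ref{P:completely-flat}(1) to rewrite the derived completion as $R\lim_n (M \otimes^L_A N \otimes^L_A A/J^n)$. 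Since Proposition~\ref{P:completely-flat}(2) applied to $M$ and $N$ separately gives that $M/J^nM$ and $N/J^nN$ are flat $A/J^n$-modules concentrated in degree $0$, the $n$-th term simplifies to $(M/J^n M) \otimes_{A/J^n} (N/J^n N)$. The tower has surjective transition maps, so $R\lim$ collapses to the ordinary inverse limit, which is by definition the classical $J$-adic completion of $M \otimes_A N$.

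For part (2), the same strategy applies: using Lemma~\ref{L:completion} one computes
\[
\phi^*(M) \otimes^L_B B/JB \;=\; (M \otimes^L_A B)^{\wedge} \otimes^L_B B/JB \;\simeq\; (M/JM) \otimes_{A/J} (B/JB),
\]
which is concentrated in degree $0$ and flat over $B/JB$ because ordinary flatness is stable under base change along $A/J \to B/JB$. Thus $M \otimes^L_A B$, viewed in $D(B)$, is $JB$-completely flat, and Proposition~\ref{P:completely-flat}(2) applied to the pair $(B, JB)$ (which is of (reduced) prismatic type by hypothesis) forces $\phi^*(M)$ to be concentrated in degree $0$ and to lie in $\FMod^{\wedge}_{JB}(B)$. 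One small observation needed here is that the derived $J$-completion over $A$ and the derived $JB$-completion over $B$ of any object of $D(B)$ coincide; this is immediate from the Koszul description \eqref{E:derived completion}, since $JB$ is generated by the images of generators of $J$.

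The only step I expect to be non-routine is the Mittag--Leffler argument in part (1), namely showing that $R\lim$ reduces to $\lim$ in the tower $\{(M/J^n M) \otimes_{A/J^n} (N/J^n N)\}_n$. This hinges on the surjectivity of the transition maps, which follows because both $M/J^nM$ and $N/J^nN$ are $A/J^n$-flat (so the natural map $N/J^{n+1}N \otimes_{A/J^{n+1}} A/J^n \to N/J^nN$ is an isomorphism), making each transition $(M/J^{n+1}M) \otimes_{A/J^{n+1}} (N/J^{n+1}N) \twoheadrightarrow (M/J^n M) \otimes_{A/J^n} (N/J^n N)$ a genuine quotient map. This is precisely where Proposition~\ref{P:completely-flat}(2) is doing the heavy lifting, by bridging the $J$-completely flat condition and the classical flatness modulo powers of $J$.
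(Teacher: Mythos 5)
Your proof is correct and follows essentially the same route as the paper: compute reductions modulo $J$ to see $J$-complete flatness of the derived tensor product, then apply Proposition~\ref{P:completely-flat} to identify the derived completion with the classical one and to deduce degree-zero concentration; the paper's proof is just more terse, summarizing your spelled-out steps with ``The Lemma follows immediately.'' One minor blemish: the identity $(M \otimes^L_A N) \otimes^L_A A/J \simeq (M \otimes^L_A A/J) \otimes^L_{A/J} (N \otimes^L_A A/J)$ in part (1) is plain associativity of derived tensor products, not Lemma~\ref{L:completion}, though this does not affect the validity of the argument.
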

	
	\begin{proof}
		(1)	By Proposition~\ref{P:completely-flat}, we have  
		\begin{align*}
			(M{\otimes}^L_AN)^{\wedge}=R\lim_n (M\otimes_A^LN\otimes^L_AA/J^n)= \lim_n(M\otimes_A N)/J^n(M\otimes_A N),
		\end{align*}
		where the last equality uses the $J$-complete flatness of $M\otimes^L_AN$. The Lemma follows immediately.
		
		(2) Let $M$ be an object of $\FMod^{\wedge}_{J}(A)$. Then $(M\otimes_A^LB)^{\wedge}$ is $JB$-completely flat. By Proposition~\ref{P:completely-flat}(2), 
		we have $M\widehat{\otimes}_AB=(M\otimes_A^LB)^{\wedge}$.
	\end{proof}

	
	Let $B^{\otimes \bullet}$ be the \v{C}ech nerve of $\phi: A\to B$  in the category of $J$-completely flat $A$-algebras, i.e. $B^{\otimes \bullet}$ is the cosimplicial object of $J$-completely flat $A$-algebras with  its $n$-component for $n\geqslant 0$ given by 
	\[B^{\otimes n}:=\underbrace{B\widehat{\otimes}_A\cdots \widehat{\otimes}_AB}_{\text{$(n+1)$-fold}}.\]
	In particular, for any integer $i$ with $0\leqslant i\leqslant n$, one has a map of $A$-algebras   $\delta^{n}_i: B^{\otimes (n-1)}\to B^{\otimes n}$
	given by 
	\[\delta^n_{i}\colon
	\quad b_0\otimes \cdots \otimes b_{n-1}\mapsto b_0\otimes \cdots\otimes  b_{i-1}\otimes 1\otimes b_{i}\otimes \cdots \otimes b_{n-1}.\]
	We have thus a diagram of morphisms of derived $J$-complete $A$-algebras: 
	\[\begin{tikzcd}
		B \arrow[r, shift left, "\delta^1_0"]
		\arrow[r, shift right, "\delta^1_1"'] & B\widehat{\otimes}_AB 
		\arrow[r, shift left=2, "\delta^2_0"]
		\arrow[r,"\delta^2_1" description]
		\arrow[r, shift right=2, "\delta^2_2"']
		& B\widehat{\otimes}_AB\widehat{\otimes}_{A}B,
	\end{tikzcd}
	\]
	Let $\mu:B\widehat{\otimes}_AB\to B$ denote the canonical diagonal surjection.

	\begin{definition}\label{D:descent-data}
		
		A \emph{descent pair relative to $\phi : A\to B$ }  consists of 
		
		\begin{itemize}
			\item a derived $JB$-complete $B$-module $M$, 
			\item and an  isomorphism of $B\widehat{\otimes}_{A}B$-modules 
			\[\varepsilon: \delta^{1,*}_0(M)=M\widehat{\otimes}_AB\xra{\sim} \delta^{1,*}_1(M)=B\widehat{\otimes}_AM, \]
			called a descent datum on $M$,  such that 
			\[\mu^*(\varepsilon)=\id_M,\quad \delta_{1}^{2,*}(\varepsilon)=\delta_{2}^{2,*}(\varepsilon)\circ \delta_{0}^{2,*}(\varepsilon).\]
		\end{itemize}
		A morphism of   descent pairs $f:(M_1,\varepsilon_1)\to (M_2, \varepsilon_2)$ is a morphism of $B$-modules $f: M_1\to M_2$ such that $\varepsilon_2\circ \delta^{1,*}_1(f)=\delta_0^{1,*}(f)\circ \varepsilon_1$.
		We denote by  $\Desc_{B/A}$  the category of  descent pairs relative to $\phi$.
		
	\end{definition}
	For an object $N\in \Mod_J^{\wedge}(A)$, there is a canonical isomorphism  
	\[\varepsilon_N: \delta_0^{1,*}(N\widehat{\otimes}_AB)\simeq N\widehat{\otimes}_A(B\widehat{\otimes}_AB)\simeq \delta_1^{1,*}(N\widehat{\otimes}_AB)\]
	which clearly satisfies the axiom for a descent pair.
	We get thus  a functor 
	\[\phi^{\natural}: \Mod^{\wedge}_J(A)\to \Desc_{B/A}, \quad  N\mapsto (N\widehat{\otimes}_AB,\varepsilon_N).\]
	A descent pair relative to $\phi$ is called effective if it lies in the essential image of $\phi^{\natural}$.

	

	

	\begin{proposition}\label{P:flat-descent}
		Let $(A,J)$ be a pair of (reduced) prismatic type, 
		and $\phi: A\to B$ be a $J$-completely faithfully flat derived $J$-complete $A$-algebra. 
		
		\begin{enumerate}
			\item Let   $M\in \FMod_{J}^{\wedge}(A)$, and  $\underline{s}(M\widehat\otimes_AB^{\otimes \bullet})$ denote the complex associated to the cosimplicial object $M\widehat\otimes_AB^{{\otimes} \bullet}$. Then the canonical map \[M\xra{\sim} \underline{s}(M\widehat\otimes_AB^{\otimes \bullet})\]
			is a quasi-isomorphism. In particular, one has a equalizer diagram of $A$-modules
			\[\begin{tikzcd}
				M\ar[r] &M\widehat{\otimes}_AB \arrow[r, shift left]
				\arrow[r, shift right] & M\widehat{\otimes}_AB\widehat{\otimes}_AB.
			\end{tikzcd}\]

			\item Let $\FDesc_{B/A}^{\wedge}$ be the subcategory of $\Desc_{B/A}$ consisting of objects $(M,\theta)$ with $M$ an $JB$-completely flat $B$-module. Then the functor $\phi^{\natural}$ induces an equivalence of categories 
			\[
			\FMod_{J}^{\wedge}(A)\xra{\sim } \FDesc_{B/A}^{\wedge}.
			\]
		\end{enumerate}

	\end{proposition}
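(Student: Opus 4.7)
\emph{Proof plan.} For part (1), I would reduce modulo $J$ and apply derived Nakayama. Each term $M\widehat{\otimes}_A B^{\otimes n}$ is derived $J$-complete by the definition of $\widehat{\otimes}$ and is $J$-completely flat over $A$ by Lemma~\ref{L:stability flatness}. Since the derived $J$-complete $A$-modules form an abelian subcategory of $\Mod(A)$ closed under kernels and cokernels, the cohomologies of $C^\bullet(M) := \underline{s}(M\widehat{\otimes}_A B^{\otimes \bullet})$ are derived $J$-complete, so $C^\bullet(M)$ is a derived $J$-complete object of $D(A)$. The cone $K$ of the canonical map $M \to C^\bullet(M)$ is therefore derived $J$-complete as well.

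Next, by Lemma~\ref{L:completion} combined with the $J$-complete flatness of $M$ and each $B^{\otimes n}$, the derived reduction $C^\bullet(M) \otimes_A^L A/J$ identifies with the classical \v Cech complex of $M/JM$ associated to the faithfully flat extension $A/J \to B/JB$. Classical faithfully flat descent makes the augmented complex $M/JM \to C^\bullet(M) \otimes_A^L A/J$ acyclic, whence $K \otimes_A^L A/J = 0$. Derived Nakayama (\cite{stacks-project}*{\href{https://stacks.math.columbia.edu/tag/0G1U}{Tag 0G1U}}) then forces $K = 0$ in $D(A)$, proving (1).

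For part (2), I would construct a quasi-inverse $\psi$ to $\phi^{\natural}$. Given $(N,\varepsilon) \in \FDesc^{\wedge}_{B/A}$, the cocycle condition on $\varepsilon$ produces a cosimplicial $A$-module $C^\bullet_{(N,\varepsilon)}$ with $n$-th term $B^{\otimes n} \widehat{\otimes}_A N$, face maps built from $\varepsilon$ in the standard way. Set $\psi(N,\varepsilon) := \underline{s}(C^\bullet_{(N,\varepsilon)})$ viewed in $D(A)$; it is derived $J$-complete since its terms are. Using Lemma~\ref{L:completion} and the $J$-complete flatness of $N$ and $B$, the reduction $\psi(N,\varepsilon) \otimes_A^L A/J$ coincides with the complex associated to the classical descent pair $(N/JN, \bar{\varepsilon})$ along the faithfully flat map $A/J \to B/JB$, which by classical descent is concentrated in degree $0$ and equal to $\bar{M}$, the classical descent of that pair. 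Derived Nakayama then forces $\psi(N,\varepsilon)$ to be concentrated in degree $0$, defining a genuine $A$-module that is $J$-completely flat by Lemma~\ref{L:flat-local}(1); hence $\psi(N,\varepsilon) \in \FMod^{\wedge}_J(A)$.

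The natural equivalence $\psi \circ \phi^{\natural} \cong \id$ is immediate from the equalizer statement of (1). For $\phi^{\natural} \circ \psi \cong \id$, the inclusion $\psi(N,\varepsilon) \hookrightarrow N$ induces by adjunction a morphism $\eta\colon \psi(N,\varepsilon) \widehat{\otimes}_A B \to N$ of $B$-modules compatible with descent data; reducing mod $J$ and invoking classical descent plus derived Nakayama (as in part (1)) shows $\eta$ is an isomorphism. The main technical subtlety, addressed throughout by Lemma~\ref{L:completion} together with the $J$-complete flatness hypotheses, is controlling the interaction between completed tensor products, totalizations, and mod-$J$ reduction, so that the classical descent picture can be invoked after reducing modulo $J$.
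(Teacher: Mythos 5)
Your argument is correct in substance but takes a genuinely different route from the paper's proof.

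For part~(1), the paper reduces to the case $M=A$, uses Lemma~\ref{L:flat-local}(2) to base change to $B$, and then invokes the standard fact (\cite[\href{https://stacks.math.columbia.edu/tag/019Z}{Tag 019Z}]{stacks-project}) that the \v Cech nerve of $\delta^1_1\colon B\to B\widehat\otimes_AB$ is homotopy equivalent to the constant object $B$ because $\delta^1_1$ has the multiplication map as a retraction. You instead observe that the cone of $M\to\underline{s}(M\widehat\otimes_AB^{\otimes\bullet})$ is derived $J$-complete, reduce modulo $J$ via Lemma~\ref{L:completion}, and apply classical descent followed by derived Nakayama. Both work; yours bypasses the homotopy trick at the cost of one additional technical step you leave implicit: for a bounded-below cochain complex each of whose terms is $J$-completely flat, the derived reduction $-\otimes^L_AA/J$ agrees with the naive termwise quotient. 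This is the standard ``bounded-below complex of $\mathrm{Tor}$-acyclic objects computes the derived functor'' lemma (e.g.\ argue via the exact sequence of cycles $Z^n$ after choosing a flat resolution), and it deserves an explicit sentence since the complex is unbounded above and the naive total-complex spectral sequence does not obviously converge without it.

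For part~(2), the paper reduces modulo $J^n$ for each $n$, descends classically at each finite level, forms the inverse limit, and invokes Proposition~\ref{P:completely-flat}(3) to get completeness and $J$-complete flatness. You instead totalize the \v Cech cosimplicial object attached to the descent pair and reduce modulo $J$ once. This is a legitimate and arguably more ``derived'' approach. Two points should be tightened. First, your citation of ``derived Nakayama'' to conclude that $\psi(N,\varepsilon)$ is concentrated in degree $0$ is not quite the right tool: derived Nakayama only detects vanishing, and the truncation $\tau^{\geqslant1}\psi(N,\varepsilon)\otimes^L_AA/J$ is not easily controlled on its own because $A/J$ may have infinite $\mathrm{Tor}$-dimension. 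What you actually need, and what you have already established, is that $\psi(N,\varepsilon)$ is derived $J$-complete and $\psi(N,\varepsilon)\otimes^L_AA/J$ is concentrated in degree $0$ and \emph{flat} over $A/J$; then Proposition~\ref{P:completely-flat}(2) gives the concentration in degree $0$. Second, with the paper's convention $B^{\otimes n}$ denotes the $(n+1)$-fold completed tensor, so the cosimplicial object attached to a descent pair should start at $N$ in degree $0$, not $B^{\otimes 0}\widehat\otimes_AN=B\widehat\otimes_AN$; the intent is clear but the indexing should be fixed so that the comparison with part~(1) (yielding $\psi\circ\phi^\natural\cong\id$) goes through cleanly.

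Comparing the two routes: the paper's inverse-limit argument keeps everything strictly inside ordinary modules and leans on Proposition~\ref{P:completely-flat}(3), which is precisely the technical result designed for this purpose. Your totalization argument is closer in spirit to how one would phrase flat descent in a derived or $\infty$-categorical setting, makes the role of derived Nakayama/completeness transparent, and recycles the machinery of part~(1) for the essential surjectivity rather than redoing a limit argument. Both are sound; yours is a worthwhile alternative once the two points above are repaired.
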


	\begin{proof}
		As usual, we treat here only the case of prismatic type, and that of reduced prismatic is similar and much simpler.  
		
		For (1), it suffices to prove the statement for $M=A$, the general case being obtained by applying $M\otimes^L_A\_$ and taking derived $J$-completion. By Lemma~\ref{L:flat-local}(2), it suffices to prove that $B\xra{\sim }\big(\underline{s}(B^{\otimes \bullet})\otimes^L_AB\big)^{\wedge}$ is a quasi-isomorphism. But note that   \[\big(\underline{s}(B^{\otimes \bullet})\otimes^L_AB\big)^{\wedge}\simeq \underline{s}(B^{\otimes \bullet}\widehat \otimes_AB), \]
		where $B^{\otimes \bullet}\widehat \otimes_AB$ is nothing but  the \v{C}ech nerve of $\delta_{1}^1: B\to B\widehat\otimes_AB$, which admits a right inverse  given by the multiplication map $B\widehat\otimes_A B\to B$. It follows from  \cite[\href{https://stacks.math.columbia.edu/tag/019Z}{Tag 019Z}]{stacks-project} that  $B^{\otimes \bullet}\widehat \otimes_AB$ is homotopy equivalent to the constant cosimplicial object $B$. Hence the canonical map $B\xra{\sim}\underline{s}(B^{\otimes \bullet}\widehat \otimes_AB)$ is also a homotopy equivalence of complexes.
		
		For (2), the full faithfulness of the restriction of $\phi^{\natural}$ to $\FMod_{J}^{\wedge}(A)$ follows easily from (1). It remains to prove the essential surjectivity. Let $(M, \varepsilon)$ be an object of $\FDesc_{B/A}^{\wedge}$. For any integer $n\geqslant 1$, let $M_n:=M/J^nB$ and 
		\[\varepsilon_n: \delta_0^{1,*}(M_n)=M_n\otimes_{A/J^n}B/J^nB\xra{\sim} \delta_1^{1,*}(M_n)=B/J^nB\otimes_{A/J^n} M_n \]
		be the reduction modulo $J^n(B\widehat{\otimes}_AB)$ of $\varepsilon$. Then $(M_n, \varepsilon_n)$ is a classical descent datum relative to  $A/J^n\to B/J^nB$. By the classical fpqc-descent, the descent datum $(M_n, \varepsilon_n)$  comes from a flat  $A/J^n$-module $N_n$. It is clear that $N_{n+1}\otimes_{A/J^{n+1}}A/J^n\simeq  N_n$. We put $N=\varprojlim_n N_n$. By \cite[\href{https://stacks.math.columbia.edu/tag/09B8}{Tag 09B8}]{stacks-project}, $N$ is $J$-adically complete and  $N_n=N/J^nN$ for all $n\geqslant 1$. 
		By Proposition~\ref{P:completely-flat}(3), $N$ is $J$-completely flat over $A$, and hence $(B{\otimes}_A^LN)^{\wedge}$ is $JB$-completely flat. By Proposition~\ref{P:completely-flat}(2), we have 
		\[(N\widehat{\otimes}_AB)\otimes_BB/J^nB=(N{\otimes}_A^LB)^{\wedge}\otimes_B^{L}B/J^nB\simeq N_n\otimes_{A/J^n}B/J^nB\simeq M_n.\]
		Passing to the limit,  we get thus an isomorphism of descent data  $(N\widehat{\otimes}_AB, \varepsilon_N)\simeq (M,\varepsilon)$. 
		
	\end{proof}

	\begin{remark}
		As pointed by the anonymous referee, if $(A,J)$ is a pair of (reduced) prismatic type,   Propsosition~\ref{P:completely-flat} and \cite[\href{https://stacks.math.columbia.edu/tag/09B8}{Tag 09B8}]{stacks-project} actually imply that $N\mapsto (N\otimes_AA/J^n)_{n\geqslant1}$  establishes an equivalence of categories  between $\FMod^{\wedge}_J(A)$ and  the category of inverse systems  of $A$-modules $(N_{n})_{n\geqslant 1}$ such that  each $N_n$  is flat over $A/J^n$ and $N_{n+1}\otimes_{A/J^{n+1}}A/J^n\simeq N_n$. 
	\end{remark}

	\section{Prismatic Crystals and Finiteness Theorems}
	
	
	In this section, we fix a prime number $p$. All the rings are supposed to be $\ZZ_{(p)}$-algebras.
	
	\subsection{Prismatic site}

	Let $(A,I)$ be a bounded prism in the sense of  \cite[Def. 3.2]{BS}, and  $X$ be a  $p$-adic formal scheme over $\Spf(A/I)$. 
	We recall first the prismatic site of $X$ relative to $A$, denoted by $(X/A)_{\prism}$, introduced in \cite[Def. 4.1]{BS}:
	
	\begin{itemize}
		\item The underlying category of $(X/A)_{\prism}$ is the opposite  of bounded prisms $(B,IB)$ over $(A,I)$  together with a map of $p$-adic formal schemes  $\Spf(B/IB)\to X$ over $A/I$; the notion of morphism in $(X/A)_{\prism}$ is the obvious one. We shall often denote such an object by 
		\[
		(\Spf(B)\leftarrow \Spf(B/IB)\ra X);
		\]
		or by $(B\to  B/IB \leftarrow R)$ if $X=\Spf(R)$ is affine. 

		\item A map 
		\[
		(\Spf(C)\leftarrow \Spf(C/IC)\ra X)\to (\Spf(B)\leftarrow \Spf(B/IB)\ra X)
		\]
		in $(X/A)_{\prism}$ is a flat cover if the underlying map  of $\delta$-$A$-algebras $B\to C$ is $(p,I)$-completely
		faithfully flat. 
	\end{itemize}   
	
	Following Grothendieck, we denote by $(X/A)_{\prism}^{\sim}$ the associated topos. From  Proposition~\ref{P:flat-descent},  it follows that there is a canonical embedding $(X/A)_{\prism}\to (X/A)_{\prism}^{\sim}$.
	
	\begin{remark}\label{R:fiber-product}
		Let $f: V:=(\Spf(C)\leftarrow \Spf(C/IC)\ra X)\to U:=(\Spf(B)\leftarrow \Spf(B/IB)\ra X)$ and $g:W:= (\Spf(D)\leftarrow \Spf(D/ID)\ra X)\to U$ be two morphisms in $(X/A)_{\prism}$. Assume that one of $f$ and $g$, say $g$,  is $(p,I)$-completely flat (i.e. the underlying map of $\delta$-$A$-algebras $B\to D$ is $(p,I)$-completely flat). Then $E:=(C\otimes^L_BD)^{\wedge}$ concentrated in degree $0$ and it is a $(p,I)$-completely flat $\delta$-$C$-algebra (cf. Lemma~\ref{L:stability flatness}); moreover,  there exists a canonical map 
		\[\Spf(E/IE)=\Spf(C/IC)\times_{\Spf(B/IB)}\Spf(D/ID)\to \Spf(B/IB)\to X.\] Then the object  $(\Spf(E)\leftarrow \Spf(E/IE)\to X)$ gives the fibre product $V\times_UW$. 
	\end{remark}

	We denote by $\cO_{(X/A)_{\prism}}$ (resp. $\overline{\cO}_{\prism}=\overline\cO_{(X/A)_{\prism}}$) the structural sheaf (resp. the reduced structural sheaf) on $(X/A)_{\prism}$ which sends an  object $(B,J)$ in $(X/A)_{\prism}$ to $B$ (resp. to $B/J$). If no confusion arises, we will simply write $\cO_{{\prism}}$ for $\cO_{(X/A)_{\prism}}$ and $\overline \cO_{\prism}$
	for $\overline{\cO}_{(X/A)_{\prism}}$.

	\begin{definition}\label{D:crystals}
		An \emph{ $\cO_{\prism}$-crystal} (resp. an $\overline{\cO}_{\prism}$-crystal) on $(X/A)_{\prism}$ is a sheaf of $\cO_{\prism}$-modules $\calF$ on $(X/A)_{\prism}$ such that
		\begin{itemize}
			\item for each object $(\Spf(B)\leftarrow \Spf(B/IB)\to X)$ of $(X/A)_{\prism}$,  the evaluation \[\calF_B:=\calF(\Spf(B)\leftarrow \Spf(B/IB)\to X)\] is a derived $(p,I)$-complete  and \emph{$(p, I)$-completely flat} $B$-module (resp. a derived $p$-complete and $p$-completely flat $B/I$-module), 
			
			\item  for any morphism 
			$$ (\Spf(C)\leftarrow \Spf(C/IC)\to X)\xra{f} (\Spf(B)\leftarrow \Spf(B/IB)\ra X),$$
			in $(X/A)_{\prism}$ the canonical linearized transition map 
			\[
			c_f(\calF):	f^*(\calF_B):=\calF_B\widehat\otimes_{B}C\ra \calF_C\]
			(resp. $c_{\bar f}(\calF): {\bar f}^*(\calF_B):=\calF_{B}\widehat{\otimes}_{B/IB}C/IC\to \calF_C$)
			is an isomorphism, where $\widehat{\otimes}$ is the completed tensor product for  the ideal $(p,I)$ (resp. for the ideal $(p)$) defined in \eqref{E:completed tensor}.
			
			An $\cO_{\prism}$-crystal  (resp.  an $\overline{\cO}_{\prism}$-crystal) $\calF$ is  called \emph{locally free of finite rank} if $\calF_{B}$ is a locally free $B$-module (resp. $B/IB$-module) of finite rank for each object $(\Spf(B)\leftarrow \Spf(B/IB)\to X)$. 
		\end{itemize}
		
		We denote 
		by $\CR((X/A)_\prism, \cO_{\prism})$ (resp. $\CR((X/A)_{\prism},\overline\cO_{\prism})$) the category of  $\cO_{\prism}$-crystals (resp. $\overline\cO_{\prism}$-crystals). 
		
		
	\end{definition}

	\begin{lemma}\label{L:crystal-data}
		The functor $\calF\mapsto (\{\calF_B\}, \{c_f(\calF)\})$ induces an equivalence of $\CR((X/A)_{\prism}, \cO_{\prism})$ and the category of the data $(\{M_B\},  \{c_f\})$, where 
		\begin{itemize}
			\item $\{M_B\}$ is the collection of  derived  $(p,I)$-complete and $(p, I)$-completely flat $B$-modules  $M_B$ corresponding to each object $(\Spf(B)\leftarrow \Spf(B/IB)\to X )$ of $(X/A)_{\prism}$;
			
			\item $\{c_f\}$ is the collection of isomorphisms of $C$-modules
			\[c_f: C\widehat{\otimes}_BM_B\xra{\sim} M_C\]
			for each morphism $f: (\Spf(C)\leftarrow \Spf(C/IC)\to X)\to (\Spf(B)\leftarrow \Spf(B/IB)\ra X) $ in $(X/A)_{\prism}$ such that 
			\begin{itemize}
				\item $c_{f}$  is the identity map if $f$ is the identity of an object of $(X/A)_{\prism}$;
				\item   the cocycle condition 
				\[c_{f\circ g}=c_g\circ g^*(c_f)\]
				is satisfied	for a composition of morphisms.
			\end{itemize}
		\end{itemize}
		
	\end{lemma}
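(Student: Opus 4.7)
The plan is to build the inverse functor explicitly and check both compositions are naturally the identity, with the flat descent results of Section~1 doing the heavy lifting.

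Given data $(\{M_B\},\{c_f\})$ as in the statement, I would first define a presheaf $\calF$ on $(X/A)_{\prism}$ by $\calF(U)=M_B$ for $U=(\Spf(B)\leftarrow \Spf(B/IB)\to X)$, with restriction along a morphism $f:V\to U$ (underlying algebra map $\phi:B\to C$) given by the composite
\[
M_B\to C\widehat\otimes_B M_B\xrightarrow{c_f}M_C.
\]
The cocycle condition $c_{fg}=c_g\circ g^*(c_f)$ together with the identity normalization makes this a functorial assignment, hence a presheaf of $\cO_{\prism}$-modules. By construction $\calF(U)$ is $(p,I)$-completely flat and derived $(p,I)$-complete, and the linearized transition map $C\widehat\otimes_B\calF(U)\to\calF(V)$ is exactly $c_f$, hence an isomorphism; so $\calF$ will be an $\cO_{\prism}$-crystal as soon as it is shown to be a sheaf.

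The crux is the sheaf axiom. A cover $V\to U$ in $(X/A)_{\prism}$ corresponds to a $(p,I)$-completely faithfully flat map $\phi:B\to C$ of bounded prisms. By Remark~\ref{R:fiber-product}, iterated fibre products $V\times_U\cdots\times_U V$ exist in the site and are represented by the objects whose underlying prism is $C^{\widehat\otimes n}:=C\widehat\otimes_B\cdots\widehat\otimes_B C$, so the \v{C}ech nerve of the cover genuinely lives in $(X/A)_{\prism}$. Using the crystal isomorphisms $c_{(\cdot)}$, the evaluation of $\calF$ on this \v{C}ech nerve is identified with the cosimplicial module $M_B\widehat\otimes_B C^{\widehat\otimes\bullet}$. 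Since $(B,(p,I))$ is of prismatic type (being a bounded prism, by Remark~\ref{R:prismatic-reduction}), the sheaf condition reduces precisely to the equalizer statement of Proposition~\ref{P:flat-descent}(1) for the $(p,I)$-completely flat $B$-module $M_B$ along $\phi$.

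Once $\calF$ is shown to be a sheaf, the two functors are mutually inverse on the nose: evaluating the reconstructed $\calF$ on each object recovers $M_B$ with the original $c_f$'s; conversely, starting from an $\cO_{\prism}$-crystal $\calG$, the sheaf built from $(\{\calG_B\},\{c_f(\calG)\})$ agrees with $\calG$ because both sheaves have the same values on every object and the same restriction maps. I expect the only real obstacle to be this sheaf verification, and more specifically the need to know that the \v{C}ech nerve of a cover actually lives in the site — which is handled by iterating Remark~\ref{R:fiber-product} along the successive $(p,I)$-completely flat legs of the nerve — after which Proposition~\ref{P:flat-descent}(1) delivers the equalizer immediately. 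The rest is routine bookkeeping.
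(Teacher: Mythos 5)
Your proposal follows the same route as the paper: construct the quasi-inverse by assembling $(\{M_B\},\{c_f\})$ into a presheaf via the transition maps $M_B\to M_B\widehat\otimes_B C\xrightarrow{c_f}M_C$, then invoke Proposition~\ref{P:flat-descent}(1) to verify the sheaf condition. Your elaboration on why the \v{C}ech nerve of a cover stays inside the site (via Remark~\ref{R:fiber-product}) is a helpful detail that the paper leaves implicit, but the argument is the same.
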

	
	\begin{proof}
		It is sufficient to construct a quasi-inverse to the evaluation  functor $\calF\mapsto (\{\calF_B\}, \{c_f(\calF)\})$. Given a datum $(\{M_B\}, \{c_f\})$ as above,   the cocycle condition guarantees that 
		\[\calM:(\Spf(B)\leftarrow \Spf(B/IB)\to X )\mapsto  M_B\]
		together with transition map 
		\[\calM_f: M_B\to f^*(M_B)=M_B\widehat{\otimes}_BC\xra{c_f} M_C\]
		for each morphism $f: (\Spf(C)\leftarrow \Spf(C/IC)\to X)\to (\Spf(B)\leftarrow \Spf(B/IB)\ra X) $ in $(X/A)_{\prism}$ indeed 
		defines a presheaf on $(X/A)_{\prism}$. 
		Then Proposition~\ref{P:flat-descent}(1) implies that $\calM$ is indeed a sheaf. The fact that $\calM$ is  an $\cO_{{\prism}}$-crystal follows immediately from the conditions on $\{c_f\}$.
	\end{proof}
	
	
	\begin{remark}
		There is an obvious analogue of this Lemma for $\overline\cO_{\prism}$-crystals by requiring   $c_{\bar f}: C/IC\widehat{\otimes}_{B/IB}M_B\to M_{C}$ to be an isomorphism for each morphism $f$ as above.
	\end{remark}
	
	\begin{example}
		Assume that $(A,I)$ is a perfect prism, and   $X=\Spf(R)$ with $R$ quasi-regular semi-perfectoid  and $A/I\to R$ is surjective.  Then the prismatic cohomology $\prism_{R/A}:=R\Gamma((X/A)_{\prism}, \cO_{\prism})$ is concentrated in degree $0$ and $(\prism_{R/A},I\prism_{R/A} )$ becomes the final object in $(X/A)_{\prism}$ (see \cite[\S 3.4]{AB}). Therefore,  $\CR((X/A)_{\prism},\cO_{\prism})$ is equivalent to the category of derived $I$-complete and completely $I$-flat $\prism_{R/A}$-modules.
	\end{example}
	
	\subsection{Functoriality}\label{S:functoriality} Let $(A,I)\to (A',I')$ be a morphism of bounded prisms. Let 
	\[\xymatrix{Y\ar[r]^f\ar[d]&X\ar[d]\\
		\Spf(A'/I')\ar[r] &\Spf(A/I)
	}\]
	be a commutative diagram of $p$-adic formal schemes. Then $f$ induces a morphism of sites 
	\[f_{\sharp}: (Y/A')_{\prism}\to (X/A)_{\prism}\]
	given by  \[(\Spf(B')\leftarrow \Spf(B'/J')\to Y)\mapsto (\Spf(B')\leftarrow \Spf(B'/J')\to Y\xra{f} X).\]  
	It is easy to see that $f_{\sharp}$ is cocontinuous,  hence it induces a morphism of topoi:
	\[f_{\prism}=(f^{-1}_{\prism},f_{\prism,*}): (Y/A')_{\prism}^{\sim}\to (X/A)_{\prism}^{\sim}.\]	
	For a sheaf $\calF$ on $(X/A)_{\prism}$, its inverse image is given by  
	\[(f^{-1}_{\prism}\calF)(B',J')=\calF(f_{\sharp}(B',J'))\]
	For a sheaf $\calE$ on $(Y/A')_{\prism}$, its direct image $f_{\prism,*}(\calE)$ is described as follows. For an object $(B,J)$ in $(X/A)_{\prism}$, let $Y_B=Y\times_{X}\Spf(B/J)$ and $B'=B\widehat{\otimes}_AA'$. Assume that  $(B',IB')$  is  a bounded prism (which is the case if either $A'$ or $B$ is $(p,I)$-completely flat over $A$.) 
	Then the direct image of a sheaf $\calE$ on $(Y/A')_{\prism}$ is given by 
	\begin{equation}\label{E:formula-relative}
		f_{\prism,*}(\calE)(B,J)=\Gamma((Y_B/B')_{\prism}, \calE|_{(Y_B/B')_{\prism}})
		\end{equation}
	where $\calE|_{(Y_B/B')_{\prism}}$ is the pullback of $\calE$ to $(Y_B/B')_{\prism}$.
	
	For an $\cO_{(X/A)_\prism}$-module $\calF$, we put 
	\[f^*_{\prism}\calF:=f^{-1}_{\prism}\calF\widehat\otimes_{f^{-1}_{\prism}\cO_{(X/A)_{\prism}}}\cO_{(Y/A')_{\prism}}.\]
	Then the functor $f^*_{\prism}$ induces the pullback map for prismatic crystals:
	\[f^*_{\prism}: \CR((X/A)_{\prism}, \cO_{(X/A)_\prism})\to \CR((Y/A')_{\prism},\cO_{(Y/A')_\prism}).\]
	Similarly, the formula
	\[\bar f^*_{\prism}: \overline \calF\mapsto f^{-1}\overline\calF\otimes_{f^{-1}_{\prism}\overline{\cO}_{(X/A)_{\prism}}}\overline{\cO}_{(Y/A')_{\prism}}\]
	defines a pullback functor 
	\[\bar f_{\prism}^*\colon \CR((X/A)_{\prism}, \overline{\cO}_{(X/A)_{\prism}})\to \CR((Y/A')_{\prism}, \overline{\cO}_{(Y/A')_{\prism}}).\]
	
	\bigskip We can now state the main results of this article.
	Let $X_{\Et}$ be the big \'etale site of $X$ consisting of all $p$-adic formal schemes over $X$. Let  $\nu_{X/A}: (X/A)_{\prism}^{\sim}\to X_{\Et}^{\sim}$ be the canonical projection. For any  sheaf $\calF$ on $(X/A)_{\prism}$ and any object $U$ of $X_{\Et}$, we have  
	\[(\nu_{X/A,*}\calF)(U)= \Gamma((U/A)_{\prism}, \calF_U),\]
	where $\calF_U$ is the inverse image of $\calF$ under the natural map of topos $(U/A)^{\sim}_{\prism}\to (X/A)^{\sim}_{\prism}$.
	
	\begin{theorem}\label{T:Obar-crystal}
		Let $X$ be a smooth $p$-adic formal scheme over $A/I$ of relative dimension $n$. Let $\calE$ be an $\overline \cO_{\prism}$-crystal locally free of finite rank on $(X/A)_{\prism}$. Then the following statements hold:
		
		\begin{enumerate}
			\item  $R\nu_{X/A,*}(\calE)$ is a perfect complex of $\cO_X$-modules with tor-amplitude in $[0,n]$.
			
			\item Let $(A,I)\to (A',I')$ be a morphism of  bounded prisms. Consider the 
			cartesian diagram
			\begin{equation}\label{E:cartesian diagram}
				\xymatrix{X'\ar[d]\ar[r]^f &X\ar[d]\\
					\Spf(A'/I')\ar[r] & \Spf(A/I).
				}
			\end{equation}
			The canonical base change map 
			\[f^{-1}R\nu_{X/A,*}*( \calE)\otimes^{L}_{f^{-1}\cO_{X}}\cO_{X'}\xra{\sim} R\nu_{X'/A',*}(\bar f_{\prism}^* \calE)\]
			is an isomorphism.
		\end{enumerate}
		
	\end{theorem}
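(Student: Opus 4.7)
The plan is to reduce both parts of the theorem to an explicit Higgs module computation in the affine case, using the structural results announced in the introduction (Theorems~\ref{T:crystals-Higgs} and~\ref{T:cohomology-Higgs}).

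First I would observe that perfectness, tor-amplitude, and the base-change property may all be checked \'etale-locally on $X$ (and on $X'$, for part (2)). Since $X$ is smooth over $\Spf(A/I)$, it can be covered by affine opens $U=\Spf(R)$ that are $p$-completely \'etale over $\Spf(A/I\langle T_1,\dots,T_n\rangle)$, and it suffices to prove both statements for each such $U$ in place of $X$. I would then choose a smooth lift $\widetilde R$ of $R$ over $A$ carrying a $\delta$-structure compatible with that on $A$; this exists by the \'etale lifting property of $\delta$-structures applied to the standard $\delta$-structure on $A\langle T_1,\dots,T_n\rangle$.

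Next I would invoke Theorem~\ref{T:crystals-Higgs} to translate $\calE$ into a topologically quasi-nilpotent Higgs module $(H,\theta)$ over $R$, with $H$ locally free of finite rank. By Theorem~\ref{T:cohomology-Higgs}, the complex $R\nu_{X/A,*}(\calE)$ is then represented by the associated Higgs complex
\[
H\xra{\theta} H\otimes_R\Omega^1_{R/(A/I)}\xra{\theta}\cdots\xra{\theta} H\otimes_R\Omega^n_{R/(A/I)},
\]
viewed as a bounded complex of $\cO_X$-modules concentrated in degrees $[0,n]$. Each term is a finite locally free $\cO_X$-module, so this complex is perfect with tor-amplitude contained in $[0,n]$, yielding (1).

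For (2), the key is that the construction $\calE\mapsto (H,\theta)$ is functorial in the prism. The smooth $\delta$-lift $\widetilde R\widehat\otimes_A A'$ of $R'=R\widehat\otimes_{A/I}(A'/I')$ obtained by base change from $\widetilde R$ gives rise, again via Theorem~\ref{T:crystals-Higgs}, to the Higgs module attached to $\bar f^*_\prism\calE$, and this Higgs module is canonically $(H\otimes_R R',\theta\otimes 1)$ under the base-change isomorphism $\Omega^i_{R/(A/I)}\otimes_R R'\simeq \Omega^i_{R'/(A'/I')}$ induced by the Cartesian diagram~\eqref{E:cartesian diagram}. Since the Higgs complex consists of locally free finite-rank modules in every degree, the derived pullback $f^{-1}R\nu_{X/A,*}(\calE)\otimes^L_{f^{-1}\cO_X}\cO_{X'}$ is computed by the naive termwise pullback, which is precisely the Higgs complex of $(H\otimes_R R',\theta\otimes 1)$. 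Applying Theorem~\ref{T:cohomology-Higgs} over the prism $(A',I')$ identifies the latter with $R\nu_{X'/A',*}(\bar f^*_\prism\calE)$, establishing the base-change isomorphism.

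The main obstacle is not the argument above, which is essentially formal, but the two local inputs from the introduction: the equivalence between $\overline\cO_\prism$-crystals and topologically quasi-nilpotent Higgs modules (Theorem~\ref{T:crystals-Higgs}), obtained via the intermediate stratification description of Proposition~\ref{P:crystal-stratification}; and the Higgs-complex computation of $R\nu_{X/A,*}$ (Theorem~\ref{T:cohomology-Higgs}), which I expect to proceed through a \v Cech--Alexander comparison between $\cO_\prism$ and a cosimplicial resolution built from the prismatic envelopes of the self-products of $\widetilde R$ in the category of $\delta$-$A$-algebras. A more subtle point specific to (2) is verifying that the passage to Higgs modules is independent of the choice of $\delta$-lift $\widetilde R$, so that the base-change isomorphism is canonical rather than merely existent for a compatible pair of lifts; this should be built into the functoriality encoded in Theorem~\ref{T:crystals-Higgs}, but deserves explicit verification.
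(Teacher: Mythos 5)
Your proposal captures the paper's overall strategy: after \'etale-localization, translate $\calE$ into a topologically quasi-nilpotent Higgs module via Theorem~\ref{T:crystals-Higgs}, represent $R\nu_{X/A,*}(\calE)$ by the Higgs--de Rham complex via Theorem~\ref{T:cohomology-Higgs}, and read off perfectness, tor-amplitude, and base change from the fact that this complex consists of finite locally free $\cO_X$-modules concentrated in degrees $[0,n]$. (One small notational slip: the terms of the Higgs complex are the Tate-twisted modules $\Omega^j_R\{-j\}$, not $\Omega^j_{R/(A/I)}$; since $I/I^2$ is invertible this does not affect perfectness or tor-amplitude, but the twist is essential to the Higgs structure.)

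There is, however, a genuine gap. The two local theorems you invoke are proved in the paper only under the extra technical hypotheses of Situation~\ref{A:local-coordinate}: one must have either that $(A,I)$ is a crystalline prism, or that the chosen $\delta$-structure on $A\langle\underline T_n\rangle$ descends to a bounded prism $(A_0,I_0)$ with Frobenius flat on $A_0/p$ and $A_0/I_0$ $p$-torsion free. These conditions are precisely what makes Proposition~\ref{P:PD-delta} (the identification of $R(1)$ with a $p$-completed PD-polynomial ring) go through, and the paper explicitly remarks that the unconditional statement under Assumption~\ref{A:local-lift} alone is only expected, not established. For a general bounded prism whose ideal $I$ is not principal, choosing \'etale coordinates and the obvious $\delta$-structure with $\delta(T_i)=0$ does not by itself put you in Situation~\ref{A:local-coordinate}. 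The paper therefore first performs a $(p,I)$-completely faithfully flat base change to a $\delta$-$A$-algebra $A'$ with $IA'$ principal, uses the fact that perfectness of the derived $p$-complete complex $R\nu_{X/A,*}(\calE)$ can be checked after such a base change (via \cite[\href{https://stacks.math.columbia.edu/tag/09AW}{Tag 09AW}]{stacks-project} and \cite[\href{https://stacks.math.columbia.edu/tag/068T}{Tag 068T}]{stacks-project}), and invokes the weak base change of Corollary~\ref{C:weak base change} to identify $(R\nu_{X/A,*}(\calE)\otimes^L_A A')^{\wedge}$ with $R\nu_{X'/A',*}(\calE')$. Only after this reduction is the local Higgs machinery available. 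Your proposal skips this step, and without it Theorems~\ref{T:crystals-Higgs} and~\ref{T:cohomology-Higgs} do not apply as stated.
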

	
	The proof of this Theorem will be given in Subsection~\ref{S:proof of main theorem} after some local preparations.
	For the moment, one can deduce immediately from Theorem~\ref{T:Obar-crystal} the following finiteness result on the cohomology of an $\cO_{{\prism}}$-crystal.  
	
	\begin{theorem}\label{T:finiteness-crystal}
		Let  $X$ be a  proper and smooth $p$-adic formal scheme  over $\Spf(A/I)$ of relative dimension $n$.
		Let $\calF$ be an $\cO_{\prism}$-crystal locally free of finite rank  on $(X/A)_{\prism}$. 
		Then   $R\Gamma((X/A)_{\prism}, \calF)$ is a perfect complex of $A$-modules with tor-amplitude in $[0,2n]$.
		Moreover, if   $(A,I)\to (A',I')$ is a morphism of $p$-torsion free bounded prisms that induces the cartesian diagram \eqref{E:cartesian diagram},   then the canonical base change map 
		\[R\Gamma((X/A)_{\prism}, \calF)\otimes^L_AA'\xra{\sim}R\Gamma((X'/A')_{\prism}, f_{\prism}^*\calF)\]
		is an isomorphism.
		
	\end{theorem}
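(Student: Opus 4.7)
The plan is to deduce Theorem~\ref{T:finiteness-crystal} from Theorem~\ref{T:Obar-crystal} for $\overline{\cO}_\prism$-crystals by reducing modulo $I$ and then invoking a derived Nakayama argument.

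First, since $\calF$ is $(p,I)$-completely flat and $I$ locally defines a Cartier divisor on $\Spec A$, the reduction $\calF/I\calF$ is an $\overline{\cO}_\prism$-crystal locally free of finite rank. Locally on $(X/A)_\prism$, where $I$ becomes principal, generated by a non-zero divisor that acts injectively on $\calF$, the short exact sequence $0\to I\calF\to \calF\to \calF/I\calF\to 0$ identifies $\calF/I\calF$ with $\calF\otimes^L_{\cO_\prism}\overline{\cO}_\prism$. Globalizing and taking $R\Gamma$ yields the base-change isomorphism
$$R\Gamma((X/A)_\prism, \calF)\otimes^L_A A/I \xra{\sim} R\Gamma((X/A)_\prism, \calF/I\calF).$$

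Next, applying Theorem~\ref{T:Obar-crystal}(1) to $\calF/I\calF$, the pushforward $R\nu_{X/A,*}(\calF/I\calF)$ is perfect on $\cO_X$ with tor-amplitude in $[0,n]$. Since $X$ is proper and smooth of relative dimension $n$ over $\Spf(A/I)$, Grothendieck's finiteness theorem for proper pushforward of coherent cohomology then gives that $R\Gamma((X/A)_\prism, \calF/I\calF)\simeq R\Gamma(X_{\Et}, R\nu_{X/A,*}(\calF/I\calF))$ is a perfect complex of $A/I$-modules with tor-amplitude in $[0,2n]$. The complex $R\Gamma((X/A)_\prism, \calF)$ is derived $(p,I)$-complete (as $R\Gamma$ preserves derived completeness), and by the previous paragraph its reduction modulo $I$ is perfect of tor-amplitude $[0,2n]$; a standard derived Nakayama argument for perfect complexes over a derived complete ring of prismatic type (compare Proposition~\ref{P:completely-flat}) then shows that $R\Gamma((X/A)_\prism, \calF)$ is itself perfect over $A$ with tor-amplitude $[0,2n]$. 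For the base change in part (2), both sides of the candidate isomorphism are perfect over $A'$ by part (1); applying the same reduction modulo $I'$ together with Theorem~\ref{T:Obar-crystal}(2), and using the $p$-torsion freeness of $A$ and $A'$ (which guarantees that $\_\otimes^L_A A'$ commutes with derived $(p, I)$-completion), one obtains the base change isomorphism modulo $I'$, which a second derived Nakayama argument promotes to a global isomorphism of perfect $A'$-complexes.

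The main technical obstacle lies in rigorously justifying Step~1, i.e.\ the base change $R\Gamma((X/A)_\prism, \calF)\otimes^L_A A/I\simeq R\Gamma((X/A)_\prism, \calF/I\calF)$. At the sheaf level the identification $\calF\otimes^L_{\cO_\prism}\overline{\cO}_\prism\simeq \calF/I\calF$ is a consequence of the Cartier divisor property of $I$ and the $(p, I)$-complete flatness of $\calF$, but transporting it through $R\Gamma$ requires either a local principal generator argument patched via the crystal condition, or the formalism of derived completion under base change developed in Section~1. A secondary subtlety is verifying that the derived Nakayama step preserves the exact tor-amplitude $[0,2n]$ rather than merely perfectness; this is standard for derived $(p, I)$-complete complexes over pairs of prismatic type, but should be spelled out.
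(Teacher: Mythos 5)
Your proposal follows essentially the same route as the paper's own proof: both pass to the $\overline\cO_{\prism}$-crystal $\overline\calF=\calF/I\calF$, identify $R\Gamma((X/A)_{\prism},\calF)\otimes^L_A A/I$ with $R\Gamma(X_{\et}, R\nu_{X/A,*}(\overline\calF))$, use Theorem~\ref{T:Obar-crystal}(1) together with properness to obtain a perfect $A/I$-complex of tor-amplitude $[0,2n]$, and then invoke a lifting criterion for perfectness over the derived $(p,I)$-complete ring $A$ (the paper cites \cite[\href{https://stacks.math.columbia.edu/tag/07LU}{Tag 07LU}]{stacks-project}, which is what your ``derived Nakayama'' step amounts to). For the base-change claim, both you and the paper reduce modulo $I'$ by derived Nakayama and then feed in Theorem~\ref{T:Obar-crystal}(2) together with the coherent projection formula.
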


	\begin{proof}
		Applying Theorem~\ref{T:Obar-crystal}(1) to $\overline \calF:=\calF/I\calF$, we see that $R\nu_{X/A,*}(\overline \calF)$ is a perfect complex of $\cO_X$-modules with perfect amplitude in $[0,n]$. Since $X$ is assumed to be proper and smooth,  it follows that 
		\[R\Gamma((X/A)_{\prism},\calF)\otimes_A^LA/I\xra{\sim} R\Gamma((X/A)_{\prism},\overline\calF)\cong R\Gamma(X_{\et}, R\nu_{X/A,*}(\overline \calF))\]
		is a perfect complex of $A/I$-modules with perfect amplitude $[0, 2n]$. We then conclude by  \cite[\href{https://stacks.math.columbia.edu/tag/07LU}{Tag 07LU}]{stacks-project} that $R\Gamma((X/A)_{\prism}, \calF)$ is a perfect complex with tor-amplitude in $[0,2n]$.
		
		For the second part of the Theorem, according to the derived Nakayama Lemma \cite[\href{https://stacks.math.columbia.edu/tag/0G1U}{Tag 0G1U}]{stacks-project}, it suffices to show that 
		\[R\Gamma((X/A)_{\prism}, \calF)\otimes^L_AA'/I'\xra{\sim}R\Gamma((X'/A')_{\prism}, f_{\prism}^*\calF)\otimes^L_{A'}A'/I'\] 
		is an isomorphism.
		But this follows from the following sequence of canonical isomorphisms:
		\begin{align*}
			R\Gamma((X/A)_{\prism},\overline \calF)\otimes^L_{A/I}A'/I'&\cong R\Gamma(X_{\et}, R\nu_{X/A,*}(\overline \calF))\otimes_{A/I}^LA'/I'\\
			&\cong  R\Gamma(X'_{\et}, f^{-1} R\nu_{X/A,*}(\overline \calF)\otimes^L_{f^{-1}\cO_{X}}\cO_{X'})\\
			&\cong R\Gamma(X_{\et}', R\nu_{X'/A',*}(\bar f_{\prism}^*\overline\calF))\\
			&\cong R\Gamma((X'/A')_{\prism}, \bar f^*_{\prism}\calF).
		\end{align*}
		where the second isomorphism is the projection formula for coherent cohomology, and the third one is Theorem~\ref{T:Obar-crystal}(2).
	\end{proof}

It is   standard to  deduce from Theorem~\ref{T:finiteness-crystal} some finiteness results in the relative case. 
\begin{theorem}
	Let   $f:Y\to X$ be a proper and smooth morphism of $p$-adic formal schemes over $\Spf(A/I)$ of relative dimension $n$, and $f_{\prism}: (Y/A)_{\prism}^{\sim}\to (X/A)_{\prism}^{\sim}$ be the associated morphism of topoi. Let $\calF$ be an $\cO_{\prism}$-crystal  locally free of finite rank on $(Y/A)_{\prism}$. Then $Rf_{\prism,*}(\calF)$ is a perfect complex of $\cO_{\prism}$-crystals on $(X/A)_{\prism}$ with tor-amplitude in degree  $[0,2n]$ in the  following sense:
	\begin{itemize}
		\item  for an object $(\Spf(B)\leftarrow \Spf(B/IB)\ra X)$ in $(X/A)_{\prism}$,  its evaluation $Rf_{\prism,*}(\calF)_{B}$ is a perfect complex of $B$-module with tor-amplitude in degree $[0,2n]$;
		\item  for any morphism $\alpha: (\Spf(C)\leftarrow \Spf(C/IC)\ra X)\to (\Spf(B)\leftarrow \Spf(B/IB)\ra X)$ in $(X/A)_{\prism}$, the natural base change map 
		\[
		(Rf_{\prism,*}(\calF)_{B}\otimes^L_BC)^{\wedge}\xra{\sim} Rf_{\prism,*}(\calF)_{C}
		\]
		is an isomorphism. 
	\end{itemize}
	
	\end{theorem}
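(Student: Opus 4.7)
The plan is to evaluate $Rf_{\prism,*}(\calF)$ on objects of $(X/A)_{\prism}$ and reduce the statement to Theorem~\ref{T:finiteness-crystal} in a relative setting. Since $f_{\sharp}\colon (Y/A)_{\prism}\to (X/A)_{\prism}$ is cocontinuous, the formula~\eqref{E:formula-relative} identifies the evaluation at any object $(\Spf(B)\leftarrow \Spf(B/IB)\to X)$ with an absolute prismatic cohomology:
\[
Rf_{\prism,*}(\calF)_{B} \;=\; R\Gamma((Y_B/B)_{\prism},\,\calF|_{Y_B}),
\]
where $Y_B:=Y\times_{X}\Spf(B/IB)$. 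Flat base change makes $Y_B$ a proper smooth formal scheme over $\Spf(B/IB)$ of relative dimension $n$, and the pullback functor of Subsection~\ref{S:functoriality} sends $\calF$ to an $\cO_{\prism}$-crystal $\calF|_{Y_B}$ on $(Y_B/B)_{\prism}$ that remains locally free of finite rank. Theorem~\ref{T:finiteness-crystal}, applied with $(A,I)$ replaced by the bounded prism $(B,IB)$ and $X$ by $Y_B$, then yields the first bullet: $Rf_{\prism,*}(\calF)_{B}$ is a perfect complex of $B$-modules with tor-amplitude in $[0,2n]$.

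For the base change (crystal) property, a morphism $\alpha\colon (\Spf(C)\leftarrow \Spf(C/IC)\to X)\to (\Spf(B)\leftarrow \Spf(B/IB)\to X)$ in $(X/A)_{\prism}$ corresponds to a $\delta$-$A$-algebra map $B\to C$, and produces a cartesian square
\[
\xymatrix{Y_C\ar[r]\ar[d] & Y_B\ar[d] \\ \Spf(C/IC)\ar[r] & \Spf(B/IB)}
\]
of proper smooth morphisms. The base change statement in Theorem~\ref{T:finiteness-crystal}, applied to the morphism of bounded prisms $(B,IB)\to (C,IC)$ and the crystal $\calF|_{Y_B}$, then supplies the required isomorphism
\[
(Rf_{\prism,*}(\calF)_{B}\otimes^{L}_{B}C)^{\wedge}\xra{\sim}Rf_{\prism,*}(\calF)_{C},
\]
which is the second bullet.

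The main obstacle is that Theorem~\ref{T:finiteness-crystal} formulates base change only for morphisms of $p$-torsion free bounded prisms, whereas the crystal condition demands it for every morphism in $(X/A)_{\prism}$. To remove this hypothesis, one should revisit the proof of Theorem~\ref{T:finiteness-crystal}: the local Higgs-module description of Section~4 and the derived-complete fpqc descent machinery of Section~1 work over any pair of prismatic type, so the restriction to $p$-torsion free prisms is inessential. Alternatively, given a general map $(B,IB)\to (C,IC)$, one may choose a $(p,I)$-completely faithfully flat cover $C\to C'$ with $C'$ being $p$-torsion free, apply the $p$-torsion free case of Theorem~\ref{T:finiteness-crystal} to $(B,IB)\to (C',IC')$, and descend the base change isomorphism for $\alpha$ via Lemma~\ref{L:flat-local}(2) together with Proposition~\ref{P:flat-descent}.
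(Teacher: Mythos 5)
Your proposal follows the same route as the paper: evaluate $Rf_{\prism,*}(\calF)$ on an object $(\Spf(B)\leftarrow\Spf(B/IB)\to X)$ via the formula \eqref{E:formula-relative}, identify the result with $R\Gamma((Y_B/B)_{\prism},\calF|_{Y_B})$ for the fiber $Y_B=Y\times_X\Spf(B/IB)$, and invoke Theorem~\ref{T:finiteness-crystal} over the bounded prism $(B,IB)$ to obtain perfectness, tor-amplitude and the base change (crystal) property. You go a bit beyond the paper in flagging that the base change clause of Theorem~\ref{T:finiteness-crystal} is phrased only for morphisms of \emph{$p$-torsion free} bounded prisms, whereas the crystal condition asks for every morphism of $(X/A)_{\prism}$; this is a fair point the paper does not comment on. Your first remedy is the right one: the proof of that base change clause (derived Nakayama, projection formula, Theorem~\ref{T:Obar-crystal}(2)) never in fact invokes $p$-torsion freeness, so the hypothesis can simply be dropped. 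Your alternative remedy, however, is circular as stated: replacing $C$ by a $(p,I)$-completely faithfully flat, $p$-torsion free $C'$ and applying the base change clause to $(B,IB)\to(C',IC')$ still requires $B$ itself to be $p$-torsion free, which is not given; one would equally need to replace $B$, and the resulting square of prisms cannot be arranged without again confronting a map out of a possibly non-$p$-torsion-free prism.
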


\begin{proof}
	For an object $(\Spf(B)\leftarrow \Spf(B/IB)\ra X)$, we put $Y_B=Y\times_{X}\Spf(B/IB)$. Then it follows from \eqref{E:formula-relative} that $Rf_{\prism,*}(\calF)_{B}\cong R\Gamma((Y_B/B)_{\prism}, \calF|_{(Y_B/B)_{\prism}})$.  This theorem follows immediately from Theorem~\ref{T:finiteness-crystal}.
	\end{proof}

	\section{Local description of prismatic crystals}\label{S:local-crystal}
	In this section, 
	we fix   a bounded prism $(A,I)$. Let  $X=\Spf(R)$ be an affine smooth $p$-adic formal scheme over $\Spf(A/I)$ of relative dimension $n\geqslant 0$.
	We will make the following assumption: 
	\begin{assumption}\label{A:local-lift}
		The $A/I$-algebra $R$ admits  a lift to a  derived $(p, I)$-complete  $\delta$-$A$-algebra $\widetilde R$ that is formally smooth over $A$. 
	\end{assumption}
	We fix such a lift $\widetilde R$, and let $\widetilde X:=(\widetilde R\to \widetilde R/I\widetilde R\cong R)$ denote the corresponding object in $(X/A)_{\prism}$.

	\begin{lemma}\label{L:final-cover}
		Under Assumption~\ref{A:local-lift}, for any object $(B\to B/IB\leftarrow R)$ in $(X/A)_{\prism}$, the product of $(B\to B/IB\leftarrow R)$ and $\widetilde X$ in  $(X/A)_{\prism}$ exists. Moreover, if we  denote this product by $(\widetilde B\to  \widetilde B/ I\widetilde B\leftarrow R)$, then  
		$\widetilde B$ is  $(p,I)$-completely faithfully flat over $B$. 
		In particular, $\widetilde X$  is a cover of the final object of  $ (X/A)^{\sim}_{\prism}$.
	\end{lemma}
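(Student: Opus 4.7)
The strategy is to construct $\widetilde{B}$ as a derived prismatic envelope of the pushout $B_0 := B\,\widehat{\otimes}_A\,\widetilde{R}$ in $\delta$-$A$-algebras. The algebra $B_0$ carries two $A/I$-algebra maps $\alpha,\beta\colon R \rightrightarrows B_0/IB_0$: the ``left'' one $\alpha$ comes from $R \to B/IB$, the ``right'' one $\beta$ from the isomorphism $R \cong \widetilde{R}/I\widetilde{R}$. For $B_0$ (or a modification of it) to underlie an object of $(X/A)_{\prism}$, these two maps must be forced to coincide, which is the role of the envelope.

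Working \'etale-locally on $X$, Assumption~\ref{A:local-lift} allows one to suppose that $\widetilde{R}$ is $(p,I)$-completely \'etale over $A\langle T_1,\ldots,T_n\rangle^{\wedge}_{(p,I)}$. Choose lifts $\widetilde{b}_i \in B$ of the elements $b_i := \iota(T_i \bmod I) \in B/IB$, and set $u_i := T_i - \widetilde{b}_i \in B_0$. Then $\alpha$ and $\beta$ agree on the generators $T_i$ of $R$ precisely when $u_i \in IB_0$ for every $i$. One defines
\[
\widetilde{B} := B_0\bigl\{u_1/I, \ldots, u_n/I\bigr\}^{\wedge}_{(p,I)},
\]
i.e.\ the Bhatt--Scholze derived prismatic envelope (\cite{BS}*{Prop.~3.13}): the universal derived $(p,I)$-complete $\delta$-$A$-algebra over $B_0$ in which each $u_i$ becomes a multiple of a local generator of $I$. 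Because the $T_i$ are coordinates (by formal smoothness of $\widetilde{R}/A$), the sequence $(u_1,\ldots,u_n)$ is $(p,I)$-completely regular in $B_0$, so \cite{BS}*{Prop.~3.13} ensures $(\widetilde{B}, I\widetilde{B})$ is a bounded prism and $\widetilde{B}$ is $(p,I)$-completely flat over $B_0$; locally, writing $I=(d)$ and $v_i := u_i/d$, one identifies $\widetilde{B}$ with a $(p,I)$-completed formal $\delta$-extension of $B$ in the generators $v_i$, subject to $T_i = \widetilde{b}_i + d v_i$.

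For the universal property, a morphism from any $(C, IC) \in (X/A)_{\prism}$ to the prospective product amounts to a pair of $\delta$-$A$-algebra maps $f\colon B \to C$ and $g\colon \widetilde{R} \to C$ that are morphisms in $(X/A)_{\prism}$. Such a pair assembles into a $\delta$-$A$-algebra map $B_0 \to C$, and compatibility of $f$ and $g$ with the structural map $R \to C/IC$ forces $g(T_i) - f(\widetilde{b}_i) \in IC$, i.e.\ $u_i$ lands in $IC$. The universal property of the envelope then yields a unique factorization through $\widetilde{B}$. This identifies $\widetilde{B}$ with the product in $(X/A)_{\prism}$, shows its independence of the choices of lifts $\widetilde{b}_i$, and guarantees that the local construction glues to a global $\widetilde{B}$. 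Composing the $(p,I)$-completely flat map $B_0 \to \widetilde{B}$ with the $(p,I)$-completely faithfully flat map $B \to B_0$ (itself a consequence of formal smoothness of $\widetilde{R}/A$, which makes $B_0$ locally $B\langle T_1,\ldots,T_n\rangle^{\wedge}$), one concludes that $\widetilde{B}$ is $(p,I)$-completely faithfully flat over $B$. The last assertion, that $\widetilde{X}$ covers the terminal object of $(X/A)_{\prism}^{\sim}$, is then immediate: for every $(B, IB)$ the projection $(B, IB) \times \widetilde{X} \to (B, IB)$ in the site corresponds to the $(p,I)$-completely faithfully flat map $B \to \widetilde{B}$ of prisms.

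The main obstacle is the invocation of the derived prismatic envelope together with its flatness and boundedness, resting on the $(p,I)$-complete regularity of $(u_1,\ldots,u_n)$ in $B_0$; this is exactly where formal smoothness of $\widetilde{R}$ intervenes, via the choice of local coordinates.
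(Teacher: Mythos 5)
Your proof follows the paper's strategy in essence: both proceed by forming the completed tensor product $C := B\widehat{\otimes}_A\widetilde R$ (with its natural $\delta$-structure via \cite{BS}*{Lemma 2.17}) and then taking the prismatic envelope of \cite{BS}*{Lemma 3.13} with respect to the kernel of the two maps from $R$, and both verify the product property from the universal property of the envelope. The difference is in execution. The paper stays coordinate-free: it takes $J := \ker\bigl(C\to B/IB\otimes_{A/I}R\to B/IB\bigr)$, observes (from formal smoothness of $R$ over $A/I$) that $J$ is \emph{locally} generated by $I$ and a regular sequence, and applies \cite{BS}*{Lemma 3.13} to $(C,J)$ directly --- that result is already stated for ideals only locally of that shape, so no gluing step or choice of coordinates is required, and the construction works verbatim under Assumption~\ref{A:local-lift} without passing to Situation~\ref{A:local-coordinate}. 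Your version reduces to the case of explicit coordinates $T_i$, chooses lifts $\widetilde b_i$, and then asserts but does not prove that the local envelopes glue; this is fixable (the universal property gives uniqueness), but it inserts a localization that the paper's phrasing deliberately avoids. A second point to repair: you deduce faithful flatness of $B\to\widetilde B$ by composing $B\to C$ (faithfully flat) with $C\to\widetilde B$, which you only claim to be flat --- a composition of a faithfully flat map with a merely flat map is not automatically faithfully flat. The paper instead reads off that $\widetilde B$ is $(p,I)$-completely faithfully flat over $B$ directly from \cite{BS}*{Lemma 3.13} applied with $(B,IB)$ as the base prism; equivalently, one can note that $\widetilde B/(p,I)\widetilde B$ is a free (PD-polynomial) module over $B/(p,I)$, so faithfulness holds.
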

	\begin{proof}
		Let $C:=(B\otimes_A^L\widetilde R)^{\wedge}$ be the derived $(p,I)$-completion of $B\otimes_A^L\widetilde R$. Then $C$ is $(p,I)$-completely faithfully flat over $B$. Hence by  Proposition~\ref{P:completely-flat}(2) it is   concentrated in degree $0$ and coincides with the classical $(p,I)$-adic completion of $B\otimes_A\widetilde R$. By \cite[Lemma 2.17]{BS}, there is a unique $\delta$-structure on $C$ compatible with the natural product $\delta$-structure  on $B\otimes_A\widetilde R$. Consider the surjection $C\to B/IB\otimes_{A/I} R\to B/IB$, and denote its kernel by $J$. 
		As $ R$ is  formally smooth over $A/I$ of relative dimension $n$,  $J$ is  locally generated by $I$ and a regular sequence of length $n$ relative to $B$. Applying \cite[Lemma~3.13]{BS}, we get  the prismatic envelope $\widetilde B:=C\{\frac{J}{I}\}^{\wedge}$ which is $(p,I)$-completely faithfully flat over $B$, and commutes with base change in $(B,IB)$. The fact that $(\widetilde B\twoheadrightarrow \widetilde B/I\widetilde B\leftarrow R)$ is the product of $(B\twoheadrightarrow B/IB\leftarrow R)$ and $\widetilde X$ in  $(X/A)_{\prism}$ follows easily from the universal property of the prismatic envelope.

		The second part of the Lemma follows immediately from the following general fact: if $\calC$ is a topos, an object $U\in \calC$  is a cover of the final object of $\calC$ if and only if for any object $V\in \calC$ there exists a cover $W\to V$ such that $W$ admits a morphism to $U$.
	\end{proof}

	\subsection{Simplicial object}\label{S:cosimplicial object}
	For each integer $m\geqslant 0$, let $\widetilde{X}(m)$ be the $(m+1)$-fold self-product of the object $\widetilde X$ in $(X/A)_{\prism}$. By the proof of Lemma~\ref{L:final-cover}, $\widetilde X(m)$ is explicitly given  by as follows.  Let $\widetilde J(m)$ be the kernel of the 
	canonical surjection 
	\[
	\widetilde R^{\otimes (m+1)}:=\underbrace{\widetilde R\widehat{\otimes}_A\dots \widehat{\otimes}_A\widetilde R}_{(m+1) \text{ copies}}\to R.
	\] 
	Since $R$ is formally smooth over $A/I$,  $\widetilde J(m)$ is locally generated by $I$ together with a $(p,I)$-completely regular sequence relative to $A$.
	We let  
	$$\widetilde{R}(m): =\widetilde R^{\otimes (m+1)}\{\frac{\widetilde{J}(m)}{I}\}^{\wedge}$$ 
	be  the prismatic envelope by the construction of \cite[Lemma 3.13]{BS}, and put $R(m):=\widetilde R(m)/I\widetilde R(m)$
	Then we have $\widetilde{X}(m)=(\widetilde R(m)\to \widetilde R(m)/I\widetilde R\leftarrow R )$. 
	%
	The $(\widetilde R(m), I\widetilde R(m))$'s form naturally a cosimplicial object of bounded prisms $(\widetilde R(\bullet), I\widetilde R(\bullet))$ over $(A,I)$. 
	For an integer $i$ with $0\leqslant i\leqslant m$, let 
	\begin{equation}\label{E:simplicial-projection}
		\delta^m_{i}: \widetilde R(m-1)\to \widetilde R(m)\end{equation}
	denote the map of $\delta$-$A$-algebras corresponding to the strictly increasing map of simplices $[m-1]:=\{0, 1,\dots, m-1\}\to[m]$ that skips  $i$, i.e. $\delta^m_{i}$ is  
	induced by the map $\widetilde R^{{\otimes }m}\to \widetilde R^{{\otimes }(m+1)}$ given by 
	\[b_0\otimes \cdots \otimes b_{m-1}\mapsto b_0\otimes \cdots\otimes  b_{i-1}\otimes 1\otimes b_{i}\otimes \cdots \otimes b_{m-1}.\] 
	In particular, we have a diagram of $\delta$-$A$-algebras:
	\[\begin{tikzcd}
		\widetilde R \arrow[r, shift left, "\delta^1_0"]
		\arrow[r, shift right, "\delta^1_1"'] & \widetilde R(1) 
		\arrow[r, shift left=2, "\delta^2_0"]
		\arrow[r,"\delta^2_1" description]
		\arrow[r, shift right=2, "\delta^2_2"']
		& \widetilde R(2).
	\end{tikzcd}
	\]
	Let $\mu: \widetilde R(1)\to \widetilde R$ denote the diagonal surjection  induced by  $\widetilde R\widehat{\otimes}_A \widetilde R\to \widetilde R$ given as $a\otimes b\mapsto ab.$
	
	\begin{remark}\label{R:simplicial product}
		Note that, for each integer $m\geqslant 1$,  one has an isomorphism of objects in $(X/A)_{\prism}$:
		\[
		\widetilde X(m)\cong \underbrace{\widetilde X(1)\times_{\widetilde X}\cdots \times_{\widetilde X}\widetilde X(1)}_{m\text{ copies}},
		\]
		where each $\widetilde X(1)=\widetilde X\times_{(X/A)_{\prism}}\widetilde X$ maps to $\widetilde X$ via the first projection. This implies that 
		\[
		\widetilde R(m)\cong \underbrace{\widetilde R(1)\widehat{\otimes}_{\widetilde R}\widetilde R(1)\widehat{\otimes}_{\widetilde R}\cdots\widehat{\otimes}_{\widetilde R}\widetilde R(1)}_{m \text{ copies}}
		\]
		where each $\widetilde R(1)$ is viewed as an $\widetilde R$-algebra via $\delta^1_1$.
		
	\end{remark}
	
	\begin{definition}\label{D:prismatic stratification}
		Let $M$ be a derived $(p,I)$-complete and $(p,I)$-completely flat $\widetilde R$-module. 
		A stratification of $M$  over  $\widetilde{R}(1)$
		is an isomorphism of $\widetilde R(1)$-modules:
		$$\epsilon : \delta_{0}^{1,*}(M)\xra{\sim} \delta_1^{1,*}(M)$$ 
		where 
		$\delta_i^{1,*}$ is the base change functor \eqref{E:base change functor},
		such that the cocyle condition is satisfied:
		\[\mu^*(\epsilon)=\id_M, \quad \delta_{1}^{2,*}(\epsilon)=\delta_{2}^{2,*}(\epsilon)\circ \delta_{0}^{2,*}(\epsilon).\]
		
		We denote by $\Strat(\widetilde R, \widetilde R(1))$  the category of derived $(p,I)$-complete and $(p,I)$-completely flat $\widetilde R$-modules together with a stratification over $\widetilde R(1)$.
	\end{definition}

	\begin{remark}\label{R:rigidification}
		The condition  $\mu^*(\epsilon)=\id_M$ is actually redundant, but we choose to keep it for convenience.  
		 Indeed, by applying the functor  $(\id\otimes \mu)^*$ to the cocycle condition $\delta_{1}^{2,*}(\epsilon)=\delta_{2}^{2,*}(\epsilon)\circ \delta_{0}^{2,*}(\epsilon)$, one gets
		\[\epsilon=\epsilon\circ \delta_{0}^{1,*}(\mu^*(\epsilon))\]
		and hence $\delta_{0}^{1,*}(\mu^*(\epsilon))=\id_{\delta_{0}^{1,*}(M)}$. As $\delta^{1}_{0}: \tilde R\to \tilde R(1)$ is $(p,I)$-completely faithfully flat, one deduces from Proposition~\ref{P:flat-descent} that $\mu^*(\epsilon)=\id_{M}$.
		
	\end{remark}
	
	We have also a variant of Definition~\ref{D:prismatic stratification} and Proposition~\ref{P:crystal-stratification} for $\overline{\cO}_{\prism}$-crystals. In general, if $f$ is morphism of objects over $A$ (e.g. $A$-modules, $A$-algebras or $A$-formal schemes, ...), we denote by $\bar f: B/IB\to C/IC$ its reduction modulo $I$.
	
	\begin{definition}\label{D:Obar-stratification}
		Let $M$ be a derived $p$-complete and $p$-completely flat $R$-module. 
		A stratification of $M$ over $R(1)=\widetilde R(1)/I\widetilde R(1)$ is an isomorphism of $ R(1)$-modules $$\epsilon:\bar \delta_0^{1,*}(M)\xra{\sim}\bar\delta_1^{1,*}(M)$$ 	
		such that $\mu^*(\epsilon)=\id_M$ and  $\bar \delta^{2,*}_1(\epsilon)=\bar\delta^{2,*}_2(\epsilon)\circ \bar \delta_0^{2,*}(\epsilon)$.

		We denote by  $\Strat( R,  R(1))$  the category of  derived $p$-complete and $p$-completely flat $R$-modules together with a stratification over $ R(1)$.
	\end{definition}

	Let $\calF$ be an $\cO_{\prism}$-crystal  on $(X/A)_{\prism}$, and $\calF(\widetilde X)$ be its value on $\widetilde X$. The crystal condition gives rise to a canonical  isomorphism 
	\[
	\epsilon_{\calF}: \delta_0^{1,*}(\calF(\widetilde X))\xra{c_{\delta_{0}^1}(\calF)} \calF(\widetilde X(1))\xrightarrow{c_{\delta_1^{1}}(\calF)^{-1}}\delta_1^{1,*}\calF(\widetilde{X})
	\]
	which makes $(\calF(\widetilde X), \epsilon_{\calF})$ an 
	object of $\Strat(\widetilde R(1))$. We get thus an evaluation functor 
	\[\ev_{\widetilde X}\colon \CR((X/A)_{\prism},\cO_{\prism})\to \Strat(R, \widetilde R(1))
	\]
	sending $\calF$ to $(\calF(\widetilde X), \epsilon_{\calF})$. 
	Similarly, we have also an evaluation  functor 
	\[\overline{\ev}_{\widetilde X}\colon\CR((X/A)_{\prism},\overline \cO_{\prism})\to \Strat(R,  R(1))\]
	for $\overline\cO_{\prism}$-crystals. 
	
	\begin{proposition}\label{P:crystal-stratification}
		Under the notation above, 
		the functors $\ev_{\widetilde X}$ and $\overline{\ev}_{\widetilde X}$ are both  equivalences of categories.
	\end{proposition}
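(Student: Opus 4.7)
The plan is to construct an explicit quasi-inverse $\Psi$ to $\ev_{\widetilde X}$, relying on the fact that $\widetilde X$ covers the final object of $(X/A)^{\sim}_{\prism}$ (Lemma~\ref{L:final-cover}) and on the faithfully flat descent of Proposition~\ref{P:flat-descent}. The argument for $\overline{\ev}_{\widetilde X}$ is obtained by reducing modulo $I$ and rerunning the same steps with $R(\bullet) = \widetilde R(\bullet)/I\widetilde R(\bullet)$ and $p$-completely flat modules in place of $\widetilde R(\bullet)$ and $(p,I)$-completely flat modules, so I focus on $\ev_{\widetilde X}$.

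Given $(M,\epsilon) \in \Strat(\widetilde R, \widetilde R(1))$, I build a crystal $\Psi(M,\epsilon)$ as follows. For each object $U = (\Spf B \leftarrow \Spf(B/IB) \to X)$ of $(X/A)_{\prism}$, form the product $V := U \times \widetilde X$ with underlying prism $(\widetilde B, I\widetilde B)$. By Lemma~\ref{L:final-cover}, $B \to \widetilde B$ is $(p,I)$-completely faithfully flat, and the structural map $\phi_U : \widetilde R \to \widetilde B$ gives a $(p,I)$-completely flat $\widetilde B$-module $N_U := \phi_U^*(M) = M \widehat\otimes_{\widetilde R} \widetilde B$. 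The fiber product $V \times_U V$ in $(X/A)_{\prism}$ is canonically identified with $\widetilde X(1) \times U$, and the universal property of the prismatic envelope \cite[Lem.~3.13]{BS} provides a canonical map $\widetilde R(1) \to \widetilde B \widehat\otimes_B \widetilde B$ intertwining the two projections with $\delta^1_0, \delta^1_1$; base-changing $\epsilon$ along this map produces a descent datum on $N_U$ for the cover $B \to \widetilde B$. The cocycle condition follows by identifying $V \times_U V \times_U V$ with $\widetilde X(2) \times U$ and base-changing the cocycle relation for $\epsilon$ on $\widetilde R(2)$. Proposition~\ref{P:flat-descent}(2) then yields a unique $(p,I)$-completely flat, derived $(p,I)$-complete $B$-module $M_B$ with $M_B \widehat\otimes_B \widetilde B \cong N_U$, and uniqueness of the descent guarantees that for a morphism $f : U' \to U$ the induced transition maps $c_f : M_B \widehat\otimes_B B' \xrightarrow{\sim} M_{B'}$ satisfy the cocycle condition. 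By Lemma~\ref{L:crystal-data}, the collection $\{M_B\}$ then defines an $\cO_{\prism}$-crystal $\Psi(M,\epsilon)$.

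For $\ev_{\widetilde X} \circ \Psi \cong \id$, take $U = \widetilde X$, so that $\widetilde B = \widetilde R(1)$; the cover $\widetilde R \to \widetilde R(1)$ admits the retraction $\mu : \widetilde R(1) \to \widetilde R$, so the descent is trivial and a direct unwinding returns $(M, \epsilon)$ exactly. Conversely, given a crystal $\calF$, the crystal condition applied to the projection $V \to \widetilde X$ gives $\calF(V) = \calF(\widetilde X) \widehat\otimes_{\widetilde R} \widetilde B$, and the sheaf property along the cover $V \to U$ combined with Proposition~\ref{P:flat-descent}(1) identifies $\calF(U)$ with the descent of $\calF(V)$ along $B \to \widetilde B$ with descent datum coming from $\calF(V \times_U V) = \calF(\widetilde X(1) \times U)$; this matches the value of $\Psi \circ \ev_{\widetilde X}(\calF)$ on $U$, giving $\Psi \circ \ev_{\widetilde X} \cong \id$. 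The main technical obstacle is the identification of $\widetilde B \widehat\otimes_B \widetilde B$ as an $\widetilde R(1)$-algebra of the required form above $B$, together with the analogous compatibility for the triple self-product; this rests on the universal property of the prismatic envelope and on the uniqueness of the compatible $\delta$-structure from \cite[Lem.~2.17]{BS}.
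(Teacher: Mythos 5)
Your proposal is correct and follows essentially the same strategy as the paper: for each object $U=(B\to B/IB\leftarrow R)$, take the product $\widetilde B$ with $\widetilde X$, pull back $(M,\epsilon)$ along the structural map $\widetilde R\to\widetilde B$ to get a descent datum for the cover $B\to\widetilde B$, apply Proposition~\ref{P:flat-descent}(2) to descend, and package the result via Lemma~\ref{L:crystal-data}. The paper spells out the transition-map cocycle condition a bit more carefully (constructing an intermediate isomorphism over $\widetilde C$ and then descending), but your appeal to the full faithfulness/uniqueness of flat descent covers the same ground.
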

	\begin{proof}
		We will prove only the statement for $\ev_{\widetilde X}$, and the case for $\overline{\ev}_{\widetilde X}$ is similar.
		We shall construct a functor quasi-inverse to $\ev_{\widetilde X}$. Let $(M, \epsilon)$ be an object of $\Strat(R, \widetilde R(1))$. We need to associate  an  $\cO_{\prism}$-crystal $M_{\prism}$  to $(M, \epsilon)$.

		Let $(B\to B/ IB\leftarrow R)$ be an object in $(X/A)_{\prism}$, and  $(\widetilde B\to  \widetilde B/I\tilde B\leftarrow R)$ be the product of $(B\to B/ IB\leftarrow R)$ and $\widetilde X$ given by  Lemma~\ref{L:final-cover}. By the  universal property of  prismatic envelopes, the canonical map of bounded prisms $p_B: (\widetilde R, I\widetilde R)\to (\widetilde B, I\widetilde B)$ induces  a commutative diagram  of 2-truncated cosimplicial $\delta$-$A$-algebras: 
		\[\begin{tikzcd}
			\widetilde R \arrow[r, shift left]
			\arrow[r, shift right]
			\arrow [d, "p_B"] 
			& \widetilde R(1) 
			\arrow[r, shift left=2]
			\arrow[r]
			\arrow[r, shift right=2]
			\arrow[d, "p_{B(1)}"]
			& \widetilde R(2)\arrow[d, "p_{B(2)}"]\\
			\widetilde B \arrow[r, shift left] \arrow[r, shift right] &\widetilde B\otimes_B\widetilde B
			\arrow[r, shift left=2]
			\arrow[r]
			\arrow[r, shift right=2]
			&\widetilde B\otimes_B\widetilde B\otimes_B\widetilde B.
		\end{tikzcd}
		\]
		Applying the functor $p_B^*$ to $(M,\epsilon)$, one gets a descent pair (Def.~\ref{D:descent-data}) $(p_{B}^*M, p_{B(1)}^*\epsilon)$ relative to the $(p,I)$-completely faithfully flat map  $B\to \tilde B$  such that $p_{B}^*M$ is $(p,I)$-completely flat over $\widetilde B$. By Proposition~\ref{P:flat-descent}(2),  there exists  a derived $(p, IB)$-complete and $(p,IB)$-completely flat $B$-module $M_B$ such that $M_B\widehat{\otimes}_B\widetilde B\simeq  p_{B}^*(M)$.

		Let $f: (C\to C/IC\leftarrow R)\to (B\to B/IB\leftarrow R)$ be  a morphism  in $(X/A)_{\prism}$, and  $(\widetilde C\to \widetilde C/I\widetilde C\leftarrow R)$ be the product of $(C\to C/IC\leftarrow R)$ with $\widetilde X$. We need to check that there exists a transition isomorphism \[c_f: f^*(M_B)=M_B\widehat{\otimes}_BC\xra{\sim}M_C\]
		satisfying the natural cocyle condition for a composition of morphisms as in Lemma~\ref{L:crystal-data}. 
		Denote by   $\tilde  f: (\widetilde B, I\widetilde B)\to  (\widetilde C, I\widetilde C)$ the  map of bounded prisms induced by $f$. Then one has $p_C=\tilde f\circ p_B.$   By functoriality, one has  a composition  of  isomorphisms
		\[
		\tilde{c}_f\colon M_B\widehat{\otimes}_BC\widehat{\otimes}_C\widetilde C=M_B\widehat{\otimes}_B{\widetilde B}\widehat{\otimes}_{\widetilde B}\widetilde C\simeq \tilde f^*p_B^*(M)=p_C^{*}(M)\simeq M_C\widehat{\otimes}_C\widetilde C;\]
		moreover,  the pullbacks of $\tilde c_f$ via the two canonical maps $\widetilde C\rightrightarrows \widetilde C\widehat\otimes_C\widetilde C$ coincide. Then the desired transition isomorphism $c_f$ is obtained by descent.
		Given  a composition of morphism 
		\[f\circ g: (D\to D/ID\leftarrow R)\xra{g} (C\to C/IC\leftarrow R)\xra{f} (B\to B/IB\leftarrow R)\] 
		in $(X/A)_{\prism}$, one has  the cocycle condition $c_{f\circ g}=c_g\circ g^*(c_f)$. Indeed, this     can be easily checked after base change to $\widetilde D$ by  functoriality, and we conclude by faithfully flat descent (Proposition~\ref{P:flat-descent}). 
		
		Now by Lemma~\ref{L:crystal-data}, the data $(\{M_B\}, \{c_f\})$ is equivalent to an $\cO_{\prism}$-crystal $M_{\prism}$ on $(X/A)_{\prism}$.
		The construction $(M,\epsilon)\mapsto M_{\prism}$ is clearly functorial, which gives a functor that is easily checked to be a quasi-inverse of   $\ev_{\widetilde X}$.
		
	\end{proof}
	
	\begin{remark}
		When $(A,I)$ is a bounded prism over  $(\ZZ_q[[q-1]], ([p]_q))$ (with $\delta(q)=0$ and $[p]_q=\frac{q^p-1}{q-1}$),  Prop.~\ref{P:crystal-stratification} was obtained by \cite[Cor. 6.7]{GSQ} and \cite[Chap. 3]{MS}.
		An analogue  for $q$-crystalline crystals was proved   in \cite[Thm. 1.3.3]{chatz}.
	\end{remark}
	
	We can use the simplicial object $\widetilde X(\bullet)$ to compute the cohomology of an  $\cO_{\prism}$-crystals or an  $\overline{\cO}_{\prism}$-crystal. For an abelian sheaf $\calF$ on $(X/A)_{\prism}$,  the \v{C}ech--Alexander complex $\CA(\widetilde X(\bullet), \calF)$ is defined as the simple complex associated to the cosimplicial abelian group  $\calF(\widetilde X(\bullet))$, i.e. one has 
	\[
	\CA(\widetilde X(\bullet), \calF)= \bigg(\calF(\widetilde X)\to \cdots \to \calF(\widetilde X(n))\xra{d^n}\calF(\widetilde X(n+1))\to \cdots
	\bigg)
	\]
	with 
	$
	d^n=\sum_{i=0}^{n+1} (-1)^i\pr_{i}^{n+1, *} , 
	$
	where $\pr_{i}^{n+1, *} \calF(\widetilde X(n))\to  \calF(\widetilde X(n+1))$ is the map induced by the projection 
	$\pr_{i}^{n+1}: \widetilde X(n+1)\to \widetilde X(n)$ that corresponds to $\delta_i^{n+1}:\widetilde R(n)\to \widetilde R(n+1)$.   If $\calF$ is an $\cO_{\prism}$-crystal,  the projection $\pr^n_{0}: \widetilde X(n)\to \widetilde X$  to the $0$-th copy  induces an isomorphism 
	\[
	c_{\pr^n_0}: \calF(\widetilde X)\widehat\otimes_{\widetilde R}\widetilde R(n)\xra{\sim} \calF(\widetilde X(n)).
	\]
	Then  $\CA(\widetilde X(\bullet), \calF)$ is isomorphic to 
\begin{equation*}
	\calF(\widetilde X)\to \calF(\widetilde X)\widehat\otimes_{\widetilde R}\widetilde R(1)\to \calF(\widetilde X)\widehat\otimes_{\widetilde R}\widetilde R(2)\to\cdots
\end{equation*}
with the $n$-th differential given by 
$$d^n=\sum_{i=0}^{n+1}(-1)^ic_{\pr_0^{n+1}}^{-1}\circ \pr_i^{n+1,*}\circ c_{\pr_0^n} \colon  \calF(\widetilde X)\widehat \otimes_{\widetilde R}\widetilde R(n)\to \calF(\widetilde X)\widehat \otimes_{\widetilde R}\widetilde R(n+1).
 $$

		\begin{proposition}[\v{C}ech--Alexander complex]\label{P:CA-complex}
		
		Let $\calF$ be an $\cO_{{\prism}}$-crystal or   an $\overline\cO_{\prism}$-crystal on $(X/A)_{\prism}$. Then under Assumption~\ref{A:local-lift},  $R\Gamma((X/A)_{\prism}, \calF)$ is computed by the \v{C}ech--Alexander  complex $\CA(\widetilde X(\bullet), \calF)$.

	\end{proposition}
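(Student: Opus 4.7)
The plan is to apply the standard Čech-to-derived-functor spectral sequence associated with a covering of the final object. By Lemma~\ref{L:final-cover}, the object $\widetilde{X}$ is a covering of the final object of the topos $(X/A)_\prism^\sim$, and by Remark~\ref{R:simplicial product} its Čech nerve coincides with the cosimplicial object $\widetilde{X}(\bullet)$ introduced in Subsection~\ref{S:cosimplicial object}. Consequently, for any abelian sheaf $\calF$ on $(X/A)_\prism$, there is a convergent spectral sequence
\[
E_1^{p,q} = H^q(\widetilde{X}(p), \calF) \Longrightarrow H^{p+q}((X/A)_\prism, \calF),
\]
whose $d_1$-differential is the alternating sum of face maps $\pr_i^{p+1, *}$, namely exactly the differential of $\CA(\widetilde{X}(\bullet), \calF)$ (using the identification furnished by the crystal property, as spelled out right before the Proposition statement). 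Hence it suffices to prove the acyclicity
\[
H^q(\widetilde{X}(p), \calF) = 0 \quad \text{for all } p \geqslant 0 \text{ and } q > 0,
\]
after which the spectral sequence degenerates on the row $q = 0$ and yields the desired identification.

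To establish this acyclicity I would pass to the slice topos $\bigl((X/A)_\prism\bigr)_{/\widetilde{X}(p)}$, in which $\widetilde{X}(p)$ is tautologically the final object. The crucial observation is that $\widetilde{X}(p)$ corresponds to a bounded prism $(\widetilde{R}(p), I\widetilde{R}(p))$ whose underlying $\delta$-$A$-algebra serves as a lift in the sense of Assumption~\ref{A:local-lift} (with base prism $\widetilde{R}(p)$ itself), so that the formalism of Proposition~\ref{P:crystal-stratification} applies on the slice. By the crystal condition reformulated in Lemma~\ref{L:crystal-data}, the restriction of $\calF$ to the slice site is controlled entirely by the single $\widetilde{R}(p)$-module $\calF(\widetilde{X}(p))$ via $(p,I)$-complete base change. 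Thus, for any cover $V \to \widetilde{X}(p)$ in the slice topos, the associated Čech complex with values in $\calF$ is obtained by applying $\calF(\widetilde{X}(p)) \widehat{\otimes}_{\widetilde{R}(p)} (-)$ to the Čech nerve of the underlying $(p,I)$-completely faithfully flat map of bounded prisms. Its acyclicity in positive degrees then follows from the faithfully flat descent statement of Proposition~\ref{P:flat-descent}(1). Combining with the cofinality of such covers in the slice topos yields the required vanishing.

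Finally, the argument for $\overline{\cO}_\prism$-crystals runs along identical lines, with Proposition~\ref{P:flat-descent}(1) now applied to the $p$-complete flat $\widetilde{R}(p)/I\widetilde{R}(p)$-module $\calF(\widetilde{X}(p))$ along the $p$-completely faithfully flat maps between the reduced prisms $\widetilde{R}(\bullet)/I\widetilde{R}(\bullet)$. The main obstacle is the acyclicity step: one must carefully commute the slice construction with the crystal property and invoke Proposition~\ref{P:flat-descent}(1) in the slice topos to obtain the required vanishing without circularity. Once this is in place, both variants of the Proposition follow uniformly from the collapse of the spectral sequence.
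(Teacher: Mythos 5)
Your argument is correct and is essentially the paper's own proof: the paper also forms the Čech spectral sequence for the cover $\widetilde X$ of the final object, reduces to the vanishing $H^j(\widetilde X(i),\calF)=0$ for $j>0$ (this is its Lemma~\ref{L:Vanishing cohomology}, proved via the crystal property plus Proposition~\ref{P:flat-descent}(1) and a cofinality argument from SGA4), and identifies the $q=0$ row with $\CA(\widetilde X(\bullet),\calF)$ by the crystal property. The one superfluous ingredient in your writeup is the appeal to Proposition~\ref{P:crystal-stratification}: the vanishing step needs only the crystal condition (Lemma~\ref{L:crystal-data}) and flat descent, not the stratification equivalence.
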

	\begin{remark}
If $\calF$ is an $\overline \cO_{\prism}$-crystal, its \v{C}ech--Alexander complex $\CA(\widetilde X(\bullet), \calF)$ is isomorphic to 
		\begin{equation}\label{E:CA-complex}
		\calF(\widetilde X)\to \calF(\widetilde X)\widehat\otimes_{  R} R(1)\to \calF(\widetilde X)\widehat\otimes_{  R} R(2)\to\cdots \end{equation}
		where the $n$-th differential is given by a similar formula as above. 
	\end{remark}
	
	For the proof of this proposition, we need the following 
	\begin{lemma}\label{L:Vanishing cohomology}
		Let $\calF$ be an $\cO_{{\prism}}$-crystal or $\overline \cO_{{\prism}}$-crystal on $(X/A)_{\prism}$. Then for any object $U=(B\to B/IB\leftarrow R)$ of $(X/A)_{\prism}$ and any integer $q>0$, we have 
		\[H^q(U, \calF)=0.\]
	\end{lemma}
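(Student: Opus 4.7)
The plan is to deduce the lemma from Proposition~\ref{P:flat-descent}(1) via the standard comparison between sheaf cohomology and \v{C}ech cohomology on a site. By a classical criterion (see e.g.\ \cite[\href{https://stacks.math.columbia.edu/tag/03F9}{Tag 03F9}]{stacks-project}), it suffices to check that for every object $V$ of $(X/A)_{\prism}$ and every covering $V'\to V$, the augmented \v{C}ech complex
\[
\calF(V) \to \calF(V') \to \calF(V' \times_V V') \to \cdots
\]
is exact. Recall that in $(X/A)_{\prism}$ a covering is a single $(p,I)$-completely faithfully flat morphism of bounded prisms, so the \v{C}ech complex already has the elementary shape above.

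Fix such a cover $f\colon U'\to U$, with $U=(\Spf(B)\leftarrow \Spf(B/IB)\to X)$ and $U'=(\Spf(C)\leftarrow \Spf(C/IC)\to X)$, where $B\to C$ is $(p,I)$-completely faithfully flat. Since $f$ is flat, Remark~\ref{R:fiber-product} identifies the iterated fiber products $U'^{\times_U(m+1)}$ in $(X/A)_{\prism}$ with the objects corresponding to the iterated completed tensor products $C^{\widehat{\otimes}_B(m+1)}$, with no additional prismatic envelope correction required. Iteratively applying the crystal transition isomorphisms formalized in Lemma~\ref{L:crystal-data}, one then identifies
\[
\calF\bigl(U'^{\times_U(m+1)}\bigr) \cong \calF(U) \widehat{\otimes}_B C^{\widehat{\otimes}_B(m+1)}.
\]
Under this identification the augmented \v{C}ech complex for $\calF$ becomes the completed tensor product tower
\[
\calF(U) \to \calF(U)\widehat{\otimes}_B C \to \calF(U) \widehat{\otimes}_B (C\widehat{\otimes}_B C) \to \cdots,
\]
which is precisely the object handled by Proposition~\ref{P:flat-descent}(1): the module $M:=\calF(U)$ is $(p,I)$-completely flat over $B$ by the definition of an $\cO_{\prism}$-crystal, and $B\to C$ is $(p,I)$-completely faithfully flat. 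Hence the complex is exact, as required.

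The $\overline{\cO}_{\prism}$-crystal case is strictly parallel after reducing modulo $I$, using that $B/IB \to C/IC$ is $p$-completely faithfully flat and that $\calF(U)$ is $p$-completely flat over $B/IB$. The only genuine technical step is the identification of $\calF$ on iterated fiber products with iterated completed tensor products of $\calF(U)$: this uses the flatness of the cover to bypass any prismatic-envelope correction in the fiber products (Remark~\ref{R:fiber-product}) combined with an iterated application of the crystal property. Once that identification is in place the conclusion is an immediate corollary of the faithfully flat descent statement already established.
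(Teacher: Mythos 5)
Your proof is correct and takes essentially the same approach as the paper: reduce to showing that the augmented \v{C}ech complex of every cover is exact, identify that complex with the completed Amitsur complex $\calF(U)\widehat{\otimes}_B C^{\widehat{\otimes}_B \bullet}$ via Remark~\ref{R:fiber-product} and the crystal property, and conclude from Proposition~\ref{P:flat-descent}(1). The only cosmetic difference is the choice of reference for the \v{C}ech-to-derived criterion (the paper cites SGA4, Expos\'e~V, Cor.~4.3) and that you spell out the identification of iterated fiber products more explicitly than the paper, which simply asserts it.
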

	
	\begin{proof}
		We will  only treat the case when $\calF$ is an  $\cO_{{\prism}}$-crystal, the case of $\overline \cO_{{\prism}}$-crystals being similar. 
		Let $V=(C\to C/IC\leftarrow R)$ be a cover of $U$ in $(X/A)_{\prism}$. Then by the crystal property of $\calF$,  the \v{C}ech complex of $\calF$ for the cover $V\to U$ is identified with the complex associated to the cosimplicial $B$-module $\calF(U)\widehat{\otimes}_{B}C^{\otimes \bullet}$. It follows from Prop.~\ref{P:flat-descent}(1) that its higher cohomology groups vanish. Then we conclude by \cite[Expos\'e V, Cor. 4.3]{SGA4} that $H^q(U, \calF)=0$ for all $q>0$.
	\end{proof}
	
	\begin{proof}[Proof of Prop.~\ref{P:CA-complex}]
		Since $\widetilde X$ is a cover of the final object of the topos $(X/A)_{\prism}^{\sim}$ (Lemma~\ref{L:final-cover}), we have a spectral sequence 
		\[E_1^{i,j}=H^{j}(\widetilde X(i), \calF)\Longrightarrow H^{i+j}((X/A)_{\prism}, \calF).\]
		By Lemma~\ref{L:Vanishing cohomology}, we have $H^j(\widetilde X(i),\calF)=0$ for  $j>0$. Hence $R\Gamma((X/A)_{\prism}, \calF)$ is computed by the \v{C}ech-Alexandre complex 
		$\Gamma(\widetilde X(i),\calF)$, which is isomorphic to \eqref{E:CA-complex} by the crystal property of $\calF$. 
	\end{proof}
	
	\begin{corollary}[weak base change]\label{C:weak base change}
	Let $X$ be a smooth $p$-adic formal scheme over $\Spf(A/I)$ (without assuming Assumption~\ref{A:local-lift}).
	Let $\calF$ be an $\cO_{\prism}$-crystal (resp.   an $\overline \cO_{\prism}$-crystal) on $(X/A)_{\prism}$. 
	Let $(A,I)\to (A',I')$ be a morphism of bounded prisms of finite tor-dimension,  $X'=X\times_{\Spf(A/I)}\Spf(A'/I')$ and $\calF'$ be the pullback of $\calF$ to $(X'/A')_{\prism}$. 
	Then the natural completed base change map 
	\begin{align*}&(R\nu_{X/A,*}(\calF)\otimes^L_AA')^{\wedge}\xra{\sim} R\nu_{X'/A',*}( \calF') \\
		(resp. \quad \quad 
		&(R\nu_{X/A,*}( \calF)\otimes^L_{A/I}A'/I')^{\wedge}\xra{\sim} R\nu_{X'/A',*}(\calF'))
	\end{align*}
	is  an isomorphism.
\end{corollary}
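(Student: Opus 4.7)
The assertion is étale local on $X$, so I would first reduce to the case where $X=\Spf(R)$ is affine and satisfies Assumption~\ref{A:local-lift}. This reduction is legitimate because the formal smoothness of $R$ over $A/I$ together with the existence of $\delta$-structures on $p$-completely étale covers of smooth $\delta$-rings (see \cite[Lemma~2.18]{BS}) guarantees that such a lift $\widetilde R$ exists after étale localization on $X$. Setting $\widetilde R':=(\widetilde R\widehat\otimes_AA')^{\wedge}$, one obtains a formally smooth $\delta$-$A'$-algebra lifting $R':=(R\otimes_{A/I}^LA'/I')^{\wedge}$, which then verifies Assumption~\ref{A:local-lift} for $(X'/A')$.

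Next I would identify the cosimplicial prisms $\widetilde R(\bullet)$ and $\widetilde R'(\bullet)$. By the universal property of the prismatic envelope and the fact, recalled in the proof of Lemma~\ref{L:final-cover}, that the construction $C\{J/I\}^{\wedge}$ commutes with $(p,I)$-completely flat base change on the base prism, one obtains natural isomorphisms $\widetilde R'(m)\simeq(\widetilde R(m)\widehat\otimes_AA')^{\wedge}$ for every $m\geqslant 0$. Combined with the crystal property of $\calF$ and the construction of $\calF'$ in Section~\ref{S:functoriality}, this yields termwise identifications
\[
\calF'(\widetilde X'(m))\simeq \calF(\widetilde X(m))\widehat\otimes_{\widetilde R(m)}\widetilde R'(m)\simeq(\calF(\widetilde X(m))\otimes_A^LA')^{\wedge},
\]
where the last completed tensor product is $(p,I)$-completely flat over $A'$ because $\calF(\widetilde X(m))$ is $(p,I)$-completely flat over $A$ (the prismatic envelopes $\widetilde R(m)$ are so over $A$). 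The analogous assertion in the $\overline\cO_{\prism}$-case, with $A$, $A'$, $\widetilde R(m)$, $\widetilde R'(m)$ replaced by $A/I$, $A'/I'$, $R(m)$, $R'(m)$, follows by reducing modulo $I$.

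At this point the two Čech--Alexander complexes, which by Proposition~\ref{P:CA-complex} compute $R\Gamma((X/A)_{\prism},\calF)$ and $R\Gamma((X'/A')_{\prism},\calF')$ respectively, agree term by term after applying $(\,\cdot\otimes_A^LA')^{\wedge}$. The remaining task, which I expect to be the main technical point, is to exchange this derived completed base change with the totalization of the cosimplicial complex. Here the hypothesis that $(A,I)\to(A',I')$ has finite tor-dimension is crucial: it ensures that $(\,\cdot\otimes_A^LA')^{\wedge}$ has uniformly bounded cohomological amplitude, which allows the interchange with the unbounded totalization via a standard spectral sequence argument (or by truncation and passage to the limit, using derived Nakayama \cite[\href{https://stacks.math.columbia.edu/tag/0G1U}{Tag 0G1U}]{stacks-project} to reduce to the computation modulo $(p,I)^n$, where everything is visibly bounded). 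Combined with the termwise identification of Step~2, this yields the desired base change isomorphism globally on $X_{\et}$ after sheafification.
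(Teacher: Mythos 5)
Your proof follows essentially the same route as the paper's, which after \'etale localization to impose Assumption~\ref{A:local-lift} simply asserts that the \v{C}ech--Alexander complex of Proposition~\ref{P:CA-complex} commutes with the base change $A\to A'$; your elaboration of where the finite tor-dimension hypothesis enters --- to control the interchange of the completed derived tensor with the unbounded cosimplicial totalization via a convergent spectral sequence --- usefully fills in what the paper leaves implicit. One small imprecision worth correcting: to obtain $\widetilde R'(m)\simeq(\widetilde R(m)\widehat\otimes_AA')^{\wedge}$ you invoke commutation of the prismatic envelope with \emph{$(p,I)$-completely flat} base change on the ground prism, but $(A,I)\to(A',I')$ is only assumed to have finite tor-dimension; what is actually needed (and what \cite[Lemma~3.13]{BS} provides, since $\widetilde J(m)$ is generated by $I$ together with a $(p,I)$-completely regular sequence relative to $A$) is commutation with \emph{arbitrary} base change of bounded prisms, so your argument goes through once the stronger statement is substituted.
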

\begin{proof}
	The problem is clearly local for the \'etale topology of $X$. Up to \'etale localization, we may impose thus Assumption~\ref{A:local-lift}. 
	In this case, the statement  follows immediately from Prop.~\ref{P:CA-complex} and the fact that the formation of \v{C}ech--Alexander complex \eqref{E:CA-complex} commutes with the base change  $A\to A'$.
\end{proof}

\begin{remark}
	After establishing Theorem~\ref{T:cohomology-Higgs}, we will see that Corollary~\ref{C:weak base change} holds without the assumption that   $A\to A'$ is of finite tor-dimension (cf. Theorem~\ref{T:Obar-crystal}(2)). 
\end{remark}

	\section{Prismatic Crystals and Higgs Fields }

	In this section, we keep the notation of Section~\ref{S:local-crystal}. 
	We will restrict ourselves to   the following  special case of Assumption~\ref{A:local-lift}.
	\begin{situation}\label{A:local-coordinate}
		We assume that $X=\Spf(R)$  admits an $(p,I)$-completely \'etale map to $ \Spf(A/I\langle \underline{T}_n\rangle)$, where  $A/I\langle \underline{T}_n\rangle:=A/I\langle {T}_1,\dots, T_n\rangle$ denotes the convergent power series ring over $A/I$  in $n$ variables (for the $p$-adic topology).  Then by deformation theory, there exists a unique derived $(p,I)$-complete and  $(p,I)$-completely \'etale $A\langle 
		\underline{T}_n\rangle$-algebra $\widetilde R$ which makes the following diagram cocartesian: 
		\[
		\xymatrix{\widetilde R\ar[r] & R\\
			A\langle \underline{T}_n\rangle\ar[u]\ar[r] &A/I\langle \underline{T}_n\rangle\ar[u].
		}
		\]
		We  choose a $\delta$-structure on $A\langle \underline{T}_n\rangle$ extending that on $A$.  Then by \cite[Lemma~2.18]{BS}, it   extends uniquely to a $\delta$-structure on $\widetilde R$. In particular,  Assumption~\ref{A:local-lift} is satisfied.  For technical reasons, we  suppose that  one of the following assumptions is satisfied:
		
		\begin{enumerate}
			\item $(A,I)$ is a crystalline prism, i.e. $I=(p)$;
			
			\item there exists a map of bounded prisms $(A_0,I_0)\to (A,I)$ such that 
			\begin{itemize}
				\item the Frobenius map on $A_0/p$ is  flat,
				
				\item $A_0/I_0$ is $p$-torsion free;
				
				\item the $\delta$-structure on $A\langle \underline{T}_n\rangle$ descends to a $\delta$-structure on $A_0\langle \underline{T}_n\rangle$.
			\end{itemize} 
		\end{enumerate} 
		For instance, if $I=(d)$ is principal and we take the $\delta$-structure with $\delta(T_i)=0$, then assumption (2) is  satisfied with  $(A_0,I_0)=(\ZZ_p\{d, \delta(d)^{-1}\}^{\wedge}, (d))$,  the $(d,p)$-completed universal oriented prism. 
		
	\end{situation}
	We will  give an explicit description of  $\overline \cO_{\prism}$-crystals on $(X/A)_{\prism}$ in terms of Higgs fields, and compute  the cohomology of $\overline \cO_{\prism}$-crystals via the de Rham cohomology of its associated Higgs field. Recall the  cosimplicial object $\widetilde R(\bullet)$ defined in \S\ref{S:cosimplicial object}. 
	We give first a more transparent description of   $\widetilde {R}(1)$. 
	Consider the canonical diagonal   surjection 
	\[A\langle \underline{T}_n\rangle \widehat{\otimes}_AA\langle \underline{T}_n\rangle \to A/I \langle \underline{T}_n\rangle,\] and denote its kernel by $J_n$. Note that we have an isomorphism 
	\[
	A\langle \underline{T}_n\rangle \widehat{\otimes}_AA\langle \underline{T}_n\rangle\simeq A\langle \underline{T}_n\rangle\langle \xi_1, \dots, \xi_n\rangle, 
	\]
	given by $T_i\otimes 1\mapsto T_i$ and $1\otimes T_i\mapsto T_i+\xi_i$ so that $\xi_i$ corresponds to $1\otimes T_i-T_i\otimes 1$ for $1\leqslant i\leqslant n$. 
	Via this isomorphism, the ideal $J_n$ corresponds to $(I, \xi_1, \dots, \xi_n)$. 
	Recall that we have chosen a $\delta$-structure on $A\langle \underline T_n\rangle $ compatible with that on $A$. We equip $A\langle \underline{T}_n\rangle\langle \xi_1, \dots, \xi_n\rangle$ with the $\delta$-structure that corresponds to the canonical induced  tensor $\delta$-structure on $A\langle \underline{T}_n\rangle \widehat{\otimes}_AA\langle \underline{T}_n\rangle$:
	explicitly, if $\delta(T_i)=f_i(\underline{T})\in A\langle \underline T_n\rangle$, we have 
	\begin{align}\label{E:delta-xi}
		\delta(\xi_i)=\sum_{j=1}^{p-1}\frac{1}{p}\binom{p}{j} \xi_i^jT_i^{p-j}+f_i(\underline{T}+\underline{\xi})-f_i(\underline{T})\in (\xi_1,\dots, \xi_n)
	\end{align}
	Applying the construction of \cite[Lemma 3.13]{BS}, we get the prismatic envelope 
	\begin{equation}\label{E:basic case}
		\big(A\langle \underline{T}_n\rangle \widehat{\otimes}_AA\langle \underline{T}_n\rangle\big)\{\frac{J_n}{I}\}^{\wedge}\simeq A\langle \underline {T}_n\rangle\{ \frac{\xi_1}{I}, \dots, \frac{\xi_n}{I}\}^{\wedge},
	\end{equation}
	which is $(p,I)$-completely faithfully flat over $A\langle \underline{T}_n\rangle $. The  formation of $A\langle \underline {T}_n\rangle\{ \frac{\xi_1}{I}, \dots, \frac{\xi_n}{I}\}^{\wedge}$ commutes with base change in $(A\langle \underline{T}_n\rangle, I A\langle \underline{T}_n\rangle)$ in the following sense:  
	If $(C,IC)$ is  a bounded prism  over  $(A\langle \underline{T}_n\rangle, I A\langle \underline{T}_n\rangle)$ and we extend the $\delta$-structure on $C$ to $C\langle \xi_1, \dots, \xi_n\rangle$ with  $\delta(\xi_i)$ given by the image of  \eqref{E:delta-xi} in $C\langle \xi_1, \dots, \xi_n\rangle$, then 
	\[C\{\frac{\xi_1}{I}, \dots, \frac{\xi_n}{I}\}^{\wedge}:=
	C\widehat{\otimes}_{A\langle \underline{T}_n\rangle} A\langle \underline{T}_n\rangle\{\frac{\xi_1}{I}, \dots, \frac{\xi_n}{I}\}^{\wedge}\]
	is nothing but the prismatic envelope of  $C\langle \xi_1, \dots, \xi_n\rangle$  with respect to the ideal $(I, \xi_1, \dots, \xi_n)$. 
	Note that the  surjection $C\langle \xi_1, \cdots, \xi_n\rangle \to C$ sending all $\xi_i$ to $0$ induces a canonical surjection  of bounded prisms:  
	\begin{equation}\label{E:augmentation map}
	C\{\frac{\xi_1}{I}, \dots, \frac{\xi_n}{I}\}^{\wedge} \to C.
	\end{equation}

	By \eqref{E:basic case}  and the functoriality of prismatic envelope, there exists a canonical map of $\delta$-$A$-algebras
	\begin{equation}\label{E:formula-1}
		A\langle \underline{T}_n\rangle\{\frac{\xi_1}{I}, \dots, \frac{\xi_n}{I}\}^{\wedge}\to \widetilde {R}(1).\end{equation}
	Recall the morphism $\delta_1^{1}: \widetilde R\to \widetilde R(1)$ induced by $\widetilde R\to \widetilde R\widehat\otimes_A\widetilde R\colon b\mapsto b\otimes 1$ (see \eqref{E:simplicial-projection}). 
	Taking the tensor product  of  $\delta^1_1$ with \eqref{E:formula-1},  one gets a map
	\[\eta: \widetilde R\{\frac{\xi_1}{I}, \dots, \frac{\xi_n}{I}\}^{\wedge}=\widetilde R\widehat{\otimes}_{A\langle \underline T_n\rangle}A\langle \underline T_n\rangle\{\frac{\xi_1}{I}, \dots, \frac{\xi_n}{I}\}^{\wedge} \to \widetilde R(1) \]
	which induces a map of prisms over $(A,I)$.

	\begin{lemma}\label{L:R-one-structure}
		The morphism $\eta$ induces an isomorphisms of prisms 
		\[(\widetilde R\{\frac{\xi_1}{I}, \dots, \frac{\xi_n}{I}\}^{\wedge}, I\widetilde R\{\frac{\xi_1}{I}, \dots, \frac{\xi_n}{I}\}^{\wedge})\xra{\sim} (\widetilde R(1), I\widetilde R(1)).\]
	\end{lemma}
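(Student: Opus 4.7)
The plan is to show that $D := \widetilde R\{\xi_1/I, \ldots, \xi_n/I\}^\wedge$ and $\widetilde R(1)$ both corepresent the same functor on $(X/A)_{\prism}$, namely $U \mapsto \Hom_{(X/A)_{\prism}}(U, \widetilde X)^{2}$, and then to check that the canonical isomorphism thus produced agrees with $\eta$.

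First I would verify that $D$ itself is an object of $(X/A)_{\prism}$. Applying \cite[Lemma 3.13]{BS} to the $(p,I)$-completely regular sequence $(I,\xi_1,\ldots,\xi_n)$ in $\widetilde R\langle \xi_1,\ldots,\xi_n\rangle$ (whose common vanishing locus is $\widetilde R/I = R$), one obtains that $D$ is a $(p,I)$-completely faithfully flat $\delta$-$\widetilde R$-algebra, that $(D, ID)$ is a bounded prism, and that $D/ID \simeq R$ via the obvious augmentation; thus $D$ defines an object $\widetilde Y$ of $(X/A)_{\prism}$.

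The main step is to construct, for each object $U = (\Spf(C) \leftarrow \Spf(C/IC) \to X)$ of $(X/A)_{\prism}$, a natural bijection between $\Hom_{(X/A)_{\prism}}(U, \widetilde Y)$ and ordered pairs in $\Hom_{(X/A)_{\prism}}(U, \widetilde X)^{2}$. By the universal property of the prismatic envelope, a morphism $\widetilde Y \to U$ amounts to the data of a $\delta$-homomorphism $\phi: \widetilde R \to C$ whose reduction modulo $I$ equals the given structure map $R \to C/IC$, together with a tuple $(a_i) \in (IC)^n$ (the images of the $\xi_i$) satisfying the $\delta$-compatibility obtained by applying $\phi$ to formula \eqref{E:delta-xi}. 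Starting from a pair $(\phi_1, \phi_2)$, set $\phi = \phi_1$ and $a_i = \phi_2(T_i) - \phi_1(T_i) \in IC$. Conversely, given $(\phi, (a_i))$, the assignment $T_i \mapsto \phi(T_i) + a_i$ defines a $\delta$-homomorphism $A\langle \underline T_n\rangle \to C$ --- and this is precisely the content of the $\delta$-compatibility imposed on the $a_i$ --- which by the $(p,I)$-completely \'etale lifting property \cite[Lemma 2.18]{BS} extends uniquely to a $\delta$-homomorphism $\phi_2: \widetilde R \to C$. The two constructions are mutually inverse.

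Finally, since $\widetilde R(1)$ tautologically corepresents the same functor, there is a unique isomorphism of prisms $D \xra{\sim} \widetilde R(1)$ matching the two representing data, and a direct inspection on generators $T_i \in \widetilde R$ and $\xi_i$ shows that this canonical isomorphism sends $T_i$ to $T_i\otimes 1$ and $\xi_i$ to $1\otimes T_i - T_i \otimes 1$, i.e.\ coincides with $\eta$. The main obstacle will be checking the equivalence of the two $\delta$-compatibility conditions in the bijection above: this reduces to the addition formula $\delta(x+y) = \delta(x) + \delta(y) + \tfrac{1}{p}\bigl(x^p + y^p - (x+y)^p\bigr)$ applied with $x = a_i$ and $y = \phi(T_i)$, which recovers precisely the image of \eqref{E:delta-xi} term by term.
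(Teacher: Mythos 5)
Your proposal is correct and takes a genuinely different route from the paper. The paper proceeds by constructing an explicit inverse map $\eta'\colon \widetilde R(1)\to \widetilde R\{\xi_1/I,\dots,\xi_n/I\}^{\wedge}$: it produces a second $\delta$-inclusion $i_2\colon \widetilde R\to \widetilde R\{\xi_\bullet/I\}^{\wedge}$ by $(p,I)$-completely \'etale lifting, tensors it with the tautological inclusion to get a map out of $\widetilde R\widehat\otimes_A\widetilde R$, invokes the universal property of the prismatic envelope to descend this to $\eta'$, and then verifies $\eta'\circ\eta=\id$ and $\eta\circ\eta'=\id$ by two separate universal-property arguments. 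You instead show that $\widetilde R\{\xi_\bullet/I\}^{\wedge}$ and $\widetilde R(1)$ represent the same presheaf $U\mapsto \Hom_{(X/A)_{\prism}}(U,\widetilde X)^{2}$ on $(X/A)_{\prism}$ --- the latter tautologically as the self-product $\widetilde X\times_{(X/A)_{\prism}}\widetilde X$, and the former by unwinding the universal property of the prismatic envelope --- so the isomorphism (and its being an isomorphism) comes for free from Yoneda, leaving only the check that it agrees with $\eta$ on generators. The essential ingredients are the same (the prismatic envelope's universal property and the \'etale lifting argument that produces the ``second section''), but your packaging eliminates the paper's two-sided verification, and it makes explicit the computation relating the $\delta$-compatibility condition on $(a_i)$ to the requirement that $T_i\mapsto \phi(T_i)+a_i$ be a $\delta$-map via the addition formula for $\delta$, which the paper leaves implicit. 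Two minor slips: you write ``corepresent'' where ``represent'' is meant (you are identifying $\Hom(-,\widetilde Y)$, a contravariant functor, not $\Hom(\widetilde Y,-)$), and since $(X/A)_{\prism}$ is the opposite of the category of prisms, a ring map $\widetilde R\{\xi_\bullet/I\}^{\wedge}\to C$ is a morphism $U\to\widetilde Y$ in $(X/A)_{\prism}$, not $\widetilde Y\to U$ as written; neither affects the substance.
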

	
	\begin{proof}
		We need to construct an inverse to $\eta$. Consider the following  diagram 
		\begin{equation}\label{E:commutative-diag}\xymatrix{A\langle \underline T_n\rangle \ar[r]\ar[d]_{T_i\mapsto T_i+\xi_i} & \widetilde R\ar[d]\ar@{-->}[ld]_{i_2}\\
				\widetilde R\{\frac{\xi_1}{I}, \dots, \frac{\xi_n}{I}\}^{\wedge}\ar[r] & \widetilde R\{\frac{\xi_1}{I}, \dots, \frac{\xi_n}{I}\}^{\wedge}/I \widetilde R\{\frac{\xi_1}{I}, \dots, \frac{\xi_n}{I}\}^{\wedge},
		}\end{equation}
		where the right vertical map is the composed  canonical map:
		\[\widetilde R\to \widetilde R/I\widetilde R \to \widetilde R\{\frac{\xi_1}{I}, \dots, \frac{\xi_n}{I}\}^{\wedge}/I \widetilde R\{\frac{\xi_1}{I}, \dots, \frac{\xi_n}{I}\}^{\wedge}.\]
		It is clear that the square of the above diagram is commutative. Since $A\langle \underline T_n\rangle\to \widetilde R$ is $(p, I)$-completely \'etale and $\widetilde R\{\frac{\xi_1}{I}, \dots, \frac{\xi_n}{I}\}^{\wedge}$ is $I$-adically complete, there exists a unique map $i_2: \widetilde R\to \widetilde R\{\frac{\xi_1}{I}, \dots, \frac{\xi_n}{I}\}^{\wedge}$ as the dotted arrow that makes all triangles in the diagram commute. Moreover, since the left vertical arrow  is a map of $\delta$-$A$-algebras and the $\delta$-structure on $\widetilde R$ is uniquely determined by its restriction to $A\langle \underline T_n\rangle$ (cf. \cite[Lemma 2.18]{BS}), it follows that $i_2$ is a map of $\delta$-$A$-algebras.
		Consider the morphism 
		\[f: \widetilde R\widehat{\otimes}_A\widetilde R\to  \widetilde R\{\frac{\xi_1}{I}, \dots, \frac{\xi_n}{I}\}^{\wedge}\]
		given by the tensor product of the natural inclusion $i_1: \widetilde R\to \widetilde R\{\frac{\xi_1}{I}, \dots, \frac{\xi_n}{I}\}^{\wedge} $ and  $i_2$. By the commutativity of the diagram \eqref{E:commutative-diag}, $i_1$ and $i_2$ agree after post-composition  with the natural surjection 
		\[
		\widetilde R\{\frac{\xi_1}{I}, \dots, \frac{\xi_n}{I}\}^{\wedge} \to \widetilde R\{\frac{\xi_1}{I}, \dots, \frac{\xi_n}{I}\}^{\wedge}/I \widetilde R\{\frac{\xi_1}{I}, \dots, \frac{\xi_n}{I}\}^{\wedge}.
		\]
		Therefore,  the composed map 
		\[
		\widetilde R\widehat{\otimes}_A\widetilde R\xra{f}  \widetilde R\{\frac{\xi_1}{I}, \dots, \frac{\xi_n}{I}\}^{\wedge}\to  \widetilde R\{\frac{\xi_1}{I}, \dots, \frac{\xi_n}{I}\}^{\wedge}/I  \widetilde R\{\frac{\xi_1}{I}, \dots, \frac{\xi_n}{I}\}^{\wedge}
		\]
		factors though $ \widetilde R\widehat{\otimes}_A\widetilde R\to \widetilde R$, i.e. $f$ sends $\widetilde J(1)\subset \widetilde R\widehat{\otimes}_A\widetilde R$ to $I \widetilde R\{\frac{\xi_1}{I}, \dots, \frac{\xi_n}{I}\}^{\wedge}$. 
		By the universal property of the prismatic envelope, it induces  a morphism of prisms over $(A,I)$: 
		\[\eta': (\widetilde R(1), I\widetilde R(1))\to (\widetilde R\{\frac{\xi_1}{I}, \dots, \frac{\xi_n}{I}\}^{\wedge}, I\widetilde R\{\frac{\xi_1}{I}, \dots, \frac{\xi_n}{I}\}^{\wedge}).\]
		It remains  to check that  $\eta$ and $\eta'$ are inverse of each other. If we regard   $\widetilde R\{\frac{\xi_1}{I}, \dots, \frac{\xi_n}{I}\}^{\wedge}$  and $\widetilde R(1)$ as $\widetilde R$-algebras via $i_1$ and $\delta^1_1$ respectively, then both $\eta$ and $\eta'$ are  maps of $\widetilde R$-algebras. In order to show that $\eta'\circ \eta=\id$, it suffices to see that $\eta'\circ\eta(\xi_i)=\xi_i$. For this, one can reduce to the case $\widetilde R=A\langle \underline{T}_n\rangle $, which follows from \eqref{E:basic case}.  
		To check $\eta\circ \eta'=\id$, it is enough, by the universal property of $(\widetilde R(1), I\widetilde R(1))$,  to show that $\eta\circ\eta'\circ\delta^1_i=\delta^1_i$ for $i=0,1$ where  $\delta^1_0, \delta^1_1: \widetilde R\to \widetilde R(1)$ are the two natural maps defined in  \eqref{E:simplicial-projection}. For $i=1$, this is evident. For $i=0$, by noting  that $i_2=\eta'\circ \delta^1_0$ by the definition  of $\eta'$, we are reduced to proving $\eta\circ i_2=\delta_0^1$. By the commutative diagram \eqref{E:commutative-diag}, both sides agree after restricting to $A\langle \underline T_n\rangle$ or after reducing modulo $I$. We conclude by the fact that $A\langle \underline T_n \rangle \to \widetilde R$ is $(p,I)$-completely \'etale. 
	\end{proof}

From now on, we will use Lemma~\ref{L:R-one-structure} to identify $\widetilde R(1)$ with $\widetilde R\{\frac{\xi_1}{d},\cdots, \frac{\xi_n}{I}\}$. 
Let $\widetilde \calK\subset \widetilde R(1)$ be the kernel of the surjection map 
\[\widetilde R(1)=\widetilde R\{\frac{\xi_1}{I},\dots, \frac{\xi_n}{I}\}^{\wedge}\to \widetilde R,\] 
defined in \eqref{E:augmentation map}.  The following technical Lemma will play an important role in our later discussion. 

\begin{lemma}\label{L:delta-phi}
	Let $\phi$ denote the Frobenius map on $\widetilde R(1)$. Then for any $x\in \widetilde \calK$, we have $\phi(x)\in I\widetilde R(1)$. 
\end{lemma}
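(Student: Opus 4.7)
The plan is to prove the lemma by identifying, modulo $d$, the Frobenius $\bar\phi\colon \widetilde R(1)\to \widetilde R(1)/d$ with the composition $\psi\circ \mu$, where $\mu\colon \widetilde R(1)\to \widetilde R$ is the augmentation and $\psi\colon \widetilde R \to \widetilde R(1)/d$ is Frobenius on $\widetilde R$ followed by the canonical map $\widetilde R\to \widetilde R(1)/d$. Once this identification is established, the lemma follows immediately, since $\bar\phi|_{\widetilde\calK}=(\psi\circ\mu)|_{\widetilde\calK}=\psi(0)=0$, which is exactly the statement that $\phi(\widetilde\calK)\subset I\widetilde R(1)$.

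First I would reduce to the principal case $I=(d)$ by Remark~\ref{R:prismatic-reduction} and invoke Lemma~\ref{L:R-one-structure} to identify $\widetilde R(1) \cong \widetilde R\{y_1,\ldots,y_n\}^{\wedge}$, where $y_i=\xi_i/d$ and $\mu(y_i)=0$. Both $\bar\phi$ and $\psi\circ\mu$ are $\delta$-ring homomorphisms $\widetilde R(1)\to \widetilde R(1)/d$, and by the universal property of the prismatic envelope (\cite[Lemma 3.13]{BS}), such a $\delta$-map is determined by its restriction to $\widetilde R\widehat\otimes_A\widetilde R$ together with the images of the generators $y_i$. A short computation shows that both maps restrict to the same homomorphism on $\widetilde R\widehat\otimes_A\widetilde R$, namely $a\otimes b \mapsto \phi(ab) \bmod d$; the key observation is that each $\xi_i=1\otimes T_i - T_i\otimes 1$ lies in $d\widetilde R(1)$, so modulo $d$ the two embeddings $\widetilde R\rightrightarrows\widetilde R\widehat\otimes_A\widetilde R$ are identified. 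The lemma therefore reduces to verifying that $\bar\phi(y_i) = (\psi\circ\mu)(y_i)=0$, i.e., $\phi(y_i)\in d\widetilde R(1)$.

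For the crucial verification $\phi(y_i)\in d\widetilde R(1)$, I would use the relation $\phi(d)\phi(y_i)=\phi(\xi_i)=\xi_i^p+p\delta(\xi_i)$ combined with the explicit formula \eqref{E:delta-xi}, which shows $\delta(\xi_i)\in (\xi_1,\ldots,\xi_n)\subset d\widetilde R(1)$; hence $\phi(\xi_i)$ lies in both $d\widetilde R(1)$ and $\phi(d)\widetilde R(1)$. Under Situation~\ref{A:local-coordinate} I would then argue that $(d,\phi(d))$ forms a regular sequence in $\widetilde R(1)$: the element $d$ is a non-zero-divisor by boundedness of the prism, and $\phi(d)\equiv p\delta(d)\pmod d$ is a non-zero-divisor modulo $d$ because $\delta(d)$ is a unit modulo $(p,d)$ and $p$ is a non-zero-divisor in $\widetilde R(1)/d$ by the $p$-torsion-freeness hypotheses. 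Consequently $d\widetilde R(1)\cap \phi(d)\widetilde R(1)=d\phi(d)\widetilde R(1)$, so $\phi(\xi_i)\in d\phi(d)\widetilde R(1)$, and dividing by $\phi(d)$ yields $\phi(y_i)\in d\widetilde R(1)$. In the crystalline case $d=p$, where $(d,\phi(d))=(p)$ is not a regular sequence, I would instead argue directly, noting that $p^p y_i^p\in (p^2)$ for $p\geq 2$ and $p\delta(\xi_i)\in p\cdot d\widetilde R(1) = (p^2)$, hence $\phi(\xi_i)\in (p^2)=(d\phi(d))$. The main obstacle I anticipate is precisely this regular-sequence verification in $\widetilde R(1)$, for which the technical hypotheses of Situation~\ref{A:local-coordinate} on $(A,I)$ (either the crystalline assumption, or the existence of $(A_0,I_0)$ with $A_0/I_0$ being $p$-torsion-free) are essential to control the $p^\infty$-torsion in $\widetilde R(1)/d$.
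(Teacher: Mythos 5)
The core idea of showing that $\bar\phi$ factors through the augmentation $\mu$ is a reasonable reformulation of the lemma, but your reduction to checking only the generators $y_i$ does not go through, and it is precisely at this point that the paper has to do real work.

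The universal-property step is not valid: $\widetilde R(1)/d$ is \emph{not} a $\delta$-ring, because $(d)$ is not a $\delta$-ideal (since $d$ is distinguished, $\delta(d)$ is a unit, so $\delta(da)=d^p\delta(a)+\phi(a)\delta(d)\notin (d)$ in general). Consequently $\bar\phi$ and $\psi\circ\mu$ are merely ring homomorphisms, not $\delta$-ring homomorphisms, and \cite[Lemma 3.13]{BS} cannot be invoked with this target. The ideal $\widetilde\calK$ is topologically generated \emph{as an ordinary ideal} by $\delta^r(y_i)$ for all $r\geqslant 0$; as $\delta$ does not preserve the ideal $(d)$, knowing $\phi(y_i)\in d\widetilde R(1)$ gives no control over $\phi(\delta^r(y_i))=\delta^r(\phi(y_i))$ for $r\geqslant 1$. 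What you establish is exactly the base case $r=0$ of the induction that is actually required; the paper's proof carries this through all $r$ by establishing the refined congruence $\phi(\delta^r(x_i))\equiv b_{i,r+1}\,d^{p^{r+1}}\delta^{r+1}(x_i)\ (\mathrm{mod}\ \calJ_r)$ and propagating it inductively, which is where the real content of the lemma lies.

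A secondary problem is the regular-sequence step. You assert that $p$ is a non-zero-divisor in $R(1)=\widetilde R(1)/d$, citing the ``$p$-torsion-freeness hypotheses,'' but Situation~\ref{A:local-coordinate}(2) only requires $A_0/I_0$ to be $p$-torsion-free; it does not assert anything about $A/I$ or $R(1)$, and boundedness of the prism only gives bounded $p^\infty$-torsion. In fact the paper's proof of this lemma uses none of the extra hypotheses in Situation~\ref{A:local-coordinate} and avoids the regularity issue entirely by applying the $\delta$-product rule $\delta(\xi_i)=\delta(dx_i)=d^p\delta(x_i)+\phi(x_i)\delta(d)$ and using the invertibility of $\delta(d)$ to solve for $\phi(x_i)$, which is both cleaner and more robust than dividing by $\phi(d)$ after invoking a regular-sequence claim.
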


\begin{proof}
	Let $ A'$ be a $(p,I)$-completely faithfully flat map of $\delta$-$A$-algebra such that $IA'$ is principal.  Put $\widetilde R'=\widetilde R\widehat{\otimes}_AA'$. Then  one has $\widetilde R(1)\widehat{\otimes}_AA'\cong \widetilde R'(1)$.  As 
	$\widetilde R(1)/I\widetilde R(1)\to \widetilde R'(1)/I\widetilde R'(1)$  is injective, it suffices to show that $\phi(x)\in I\widetilde R'(1)$ for all $x$ in the kernel of the canonical surjection map $\widetilde R'(1)\to \widetilde R'$. Therefore, up to a base change in $(A,I)$, we may  assume that $I$ is generated by  a distinguished element $d\in A$. Then  we have
	\[\widetilde R(1)\cong \widetilde R \{\frac{\xi_1}{d}, \dots, \frac{\xi_n}{d}\}^{\wedge}= \bigg(\widetilde R\langle \xi_1,\dots, \xi_n\rangle \{X_1,\cdots, X_n\}/(\xi_i-dX_i:1\leqslant i\leqslant n)_{\delta}\bigg)^{\wedge}
	\] 
	where $\widetilde R\langle \xi_1,\dots, \xi_n\rangle \{X_1,\cdots, X_n\}$ is the free $\delta$-algebra over $\widetilde{R}\langle \xi_1,\dots, \xi_n\rangle $ in $n$-variables, and $(\xi_i-dX_i:1\leqslant i\leqslant n)_{\delta}$ is the  ideal generated by $\delta^r(\xi_i-dX_i)$  for all $r\geqslant 0$ and  $1\leqslant i\leqslant n$.
	Let  $x_i$ denote the image of $X_i$ in $\widetilde R(1)$. Then  $\widetilde \calK\subset \widetilde R(1)$ is 
	the closure of the ideal generated by $\delta^{r}(x_i)$ with $r\geqslant 0 $ and $1\leqslant i\leqslant n$.  Therefore, in order to prove the Lemma, it suffices to see $\phi(\delta^r(x_i))\in I\widetilde R(1)$ for all $r\geqslant 0$ and $1\leqslant i\leqslant n$.

	 Let $\calJ_r\subseteq \widetilde R(1)$ denote the closed ideal generated by $d^{p^j}\delta^j(x_k)$ with $0\leqslant j\leqslant r$ and $1\leqslant k\leqslant n$.
	We claim that   for every  $r\geqslant 0$ and $1\leqslant i\leqslant n$, there exists a $b_{i,r+1}\in \widetilde R(1)^{\times}$ such that 
	\begin{equation*}\label{E:delta-phi}
		\phi(\delta^r(x_i))=\delta^r(\phi(x_i))=\delta^r(x_i)^p+p\delta^{r+1}(x_i)\equiv  b_{i, r+1}d^{p^{r+1}}\delta^{r+1}(x_i)\mod \calJ_r.
	\end{equation*}
This claim  implies immediately $\phi(\delta^r(x_i))\in I\widetilde R(1)$. To show the claim,   we proceed  by induction on $r\geqslant0$. For $1\leqslant i\leqslant n$, we have
\[\delta(\xi_i)=\delta(d x_i)=d^p\delta(x_i)+\phi(x_i)\delta(d).\]
In view of \eqref{E:delta-xi} and $\xi_j=dx_j$,  we  have $\delta(\xi_i)\in \calJ_0$.  As $d$ is distinguished, we have $\delta(d)\in A^{\times}$ and 
\[
\phi(x_i)\equiv -\frac{1}{\delta(d)}d^p\delta(x_i)\mod \calJ_0.
\]
This proves the claim  for $r=0$. Suppose now  that the statement  holds for all integers $\leqslant r$ so that    there exist $b_{i,j}^{r,k}\in \widetilde R(1)$ and $b_{i,r+1}\in \widetilde R(1)^{\times}$ such that
\[
\delta^r(\phi(x_i))=\sum_{j=0}^r\sum_{k=1}^nb^{r,k}_{i,j}d^{p^j}\delta^j(x_k)+b_{i,r+1} d^{p^{r+1}}\delta^{r+1}(x_i).
\]
Applying $\delta$, we get 
\begin{align*}
	\delta^{r+1}(\phi(x_i))&=\delta\bigg(\sum_{j=0}^r\sum_{k=1}^nb^{r,k}_{i,j}d^{p^j}\delta^j(x_k)+b_{i,r+1} d^{p^{r+1}}\delta^{r+1}(x_i)\bigg)\\
	&\equiv \sum_{j=0}^r\sum_{k=1}^n\delta(b_{i,j}^{r,k}d^{p^j}\delta^j(x_k))+\delta(b_{i,r+1}d^{p^{r+1}}\delta^{r+1}(x_i)) \mod \calJ_{r+1}.
\end{align*}
Note that   $$\delta(b_{i,j}^{r,k}d^{p^j}\delta^j(x_k))=\delta(b_{i,j}^{r,k}d^{p^j})\phi(\delta^j(x_k))+(b_{i,j}^{r,k})^p d^{p^{j+1}}\delta^{j+1}(x_k),$$ and  $\phi(\delta^j(x_k))=\delta^j(\phi(x_k))\in \calJ_j\subset \calJ_{r+1}$ by the induction hypothesis. We have  also 
\[\delta(b_{i,r+1}d^{p^{r+1}}\delta^{r+1}(x_k))=\delta(b_{i,r+1}d^{p^{r+1}}) \phi(\delta^{r+1}(x_i))+ b_{i,r+1}^p d^{p^{r+2}}\delta^{r+2}(x_i). \]
It follows that 
\[(1-\delta(b_{i,r+1}d^{p^{r+1}}))\delta^{r+1}(\phi(x_i))\equiv b_{i,r+1}^p d^{p^{r+2}}\delta^{r+2}(x_i)\mod \calJ_{r+1}.\]
Now by induction on $m\geqslant 1$, it is easy to see  that $\delta(bd^{m})\in (d^p,p)^{m-1}$ for any $b\in \widetilde R(1)$. It follows that $\delta(b_{i,r+1}d^{p^{r+1}})$ is topologically nilpotent  and $1-\delta(b_{i,r+1}d^{p^{r+1}})$ is invertible as $\widetilde R(1)$ is $(p,d)$-complete. We conclude that 
\[\delta^{r+1}(\phi(x_i))\equiv b_{i,r+2}d^{p^{p+2}}\delta^{r+2}(x_i)\mod \calJ_{r+1}\]
with $b_{i,r+2}=(1-\delta(b_{i,r+1}d^{p^{r+1}}))^{-1}b_{i,r+1}^p\in \widetilde R(1)^{\times}$. This finishes the induction process and hence the proof of Lemma~\ref{L:delta-phi} is finished.
\end{proof}

	\subsection{A divided power algebra} Recall that $\widetilde J(1)$ is the kernel of the canonical diagonal surjection $\widetilde R\widehat \otimes_{A}\widetilde R\to \widetilde R$. Since $\widetilde R$ is $(p, I)$-completely \'etale over $A\langle \underline T_n\rangle $,  the module of continuous differential $1$-forms of $\widetilde R$ relative to $A$ 
\[\Omega^1_{\widetilde R/A}:=\widetilde J(1)/\widetilde J(1)^2\] 
is free of rank $n$ over $\widetilde R$ with basis $(\rd T_i: 1\leqslant i\leqslant n)$.  
Let $\Bl_{\calJ}(\widetilde R\widehat \otimes_{A}\widetilde R)$ denote the  blow-up of $\Spec(\widetilde R\widehat \otimes_{A}\widetilde R)$ along the ideal  $\calJ:=(I, \tilde J(1))$.
Let $\widetilde U$ be the $(p, I)$-adic completion of the affine open subset of $\Bl_{\calJ}(\widetilde R\widehat \otimes_{A}\widetilde R)$, where the inverse image of  $I$ generates that  of $\calJ$, and put $U=\widetilde U\otimes_{A}A/I$. 
Note that  $\calJ\widetilde R(1) $ is generated by $I$.  
The universal property of  blow-up implies that there exists a  morphism of formal schemes $ \Spf(\widetilde R(1))\to \widetilde U$  such that the following diagram is commutative: 
\begin{equation}\label{E:blow-up-diag}
\xymatrix{\Spf(R(1))\ar@{^(->}[r]\ar[d] & \Spf(\widetilde R(1))\ar[d]\\
	U\ar@{^(->}[r]\ar[d] & \widetilde U\ar[d]\\
	\Spf(R)\ar@{^(->}[r] & \Spf(\widetilde R\widehat\otimes \widetilde R).}
\end{equation}
Here, the  bottom horizontal  arrow is the diagonal closed embedding defined by the ideal $\calJ$, and the upper square is cartesian. 

	We can give  a more transparent description of $U$.  Put $\Omega^1_{R}:=\Omega^1_{ \widetilde R/A}\otimes_A A/I$. In general, for an $A/I$-module $M$ and an integer $i\geqslant 0$, we put $M\{i\}=M\otimes_{A/I}(I/I^2)^{\otimes i}$ and $M\{-i\}=M\otimes_{A/I}(I/I^2)^{\vee, \otimes i}$. 

\begin{lemma}\label{L:description-U}
	Let $R\langle \Omega^1_R\{-1\}\rangle  $ denote  the $p$-adic completion of the symmetric power algebra of the  locally free $R$-module $\Omega^1_R\{-1\}$. 
	Then one has 
	\[U\cong \Spf\big(R\langle \Omega^1_R\{-1\}\rangle  \big)
	\]
\end{lemma}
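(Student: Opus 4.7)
The plan is to reduce to the case $I=(d)$ principal with $d$ distinguished, present $\widetilde U$ via its universal property as a blowup chart, and then pass modulo $d$. Since the construction of $\widetilde U$ commutes with $(p,I)$-completely faithfully flat base change in $(A,I)$, we may, as in the proof of Lemma~\ref{L:delta-phi}, reduce to the case $I=(d)$ with $d$ distinguished. The dual $d^{-1}$ then trivializes $(I/I^2)^{\vee}$, and the task becomes producing a natural isomorphism $U\cong \Spf(R\langle y_1,\dots,y_n\rangle^{\wedge p})$ with $y_i\leftrightarrow \rd T_i\otimes d^{-1}\in \Omega^1_R\{-1\}$. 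By the $(p,I)$-completed \'etaleness of $A\langle \underline T_n\rangle\to \widetilde R$, the ideal $\widetilde J(1)\subset \widetilde R\widehat\otimes_A\widetilde R$ is topologically generated by $\xi_i:=1\otimes T_i-T_i\otimes 1$, so $\calJ=(d,\xi_1,\dots,\xi_n)$ is a $(p,I)$-completely regular sequence. Consequently $\widetilde U$ is the universal $(p,d)$-complete $\widetilde R\widehat\otimes_A\widetilde R$-algebra $C$ on which $d$ is a nonzerodivisor and each $\xi_i\in dC$, i.e.\ on which there exist unique $y_1,\dots,y_n\in C$ with $\xi_i=dy_i$; the regularity ensures that the presented ring $(\widetilde R\widehat\otimes_A\widetilde R)[y_i]/(\xi_i-dy_i)$ is already $d$-torsion free, so no saturation is needed.

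The heart of the argument is the identification $\widetilde U\cong \Spf(\widetilde R\langle y_1,\dots,y_n\rangle^{\wedge(p,d)})$ via the first-factor inclusion $\widetilde R\hookrightarrow \widetilde R\widehat\otimes_A\widetilde R$. Given $C$ as above, the relation $1\otimes T_i=T_i\otimes 1+\xi_i=T_i+dy_i$ forces the second-factor embedding to send $T_i\mapsto T_i+dy_i$; since this prescription agrees modulo $d$ with the first-factor inclusion, the $(p,I)$-completed \'etaleness of $A\langle \underline T_n\rangle\to \widetilde R$ (exactly as in the proof of Lemma~\ref{L:R-one-structure}) extends it uniquely to a map $\widetilde R\to C$. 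Hence the universal problem is equivalent to choosing $(y_1,\dots,y_n)\in C^n$, which is represented precisely by $\widetilde R\langle y_1,\dots,y_n\rangle^{\wedge(p,d)}$.

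Reducing modulo $d$ then yields $U\cong \Spf(R\langle y_1,\dots,y_n\rangle^{\wedge p})$, and the identification $y_i\leftrightarrow \rd T_i\otimes d^{-1}$ produces the desired isomorphism $U\cong \Spf(R\langle \Omega^1_R\{-1\}\rangle)$. This is canonical: rescaling $d$ by a unit $u\in A^{\times}$ multiplies both $y_i=\xi_i/d$ and $d^{-1}\in (I/I^2)^{\vee}$ by $u^{-1}$, so the correspondence $y_i\leftrightarrow \rd T_i\otimes d^{-1}$ is unchanged. The main obstacle is the \'etaleness-lifting step that identifies the blowup chart with a free completed polynomial ring over $\widetilde R$; the regularity and $d$-torsion-freeness inputs are standard properties of blowups along a regular ideal.
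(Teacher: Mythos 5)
Your proof is correct, but it takes a genuinely different route from the paper's. The paper stays canonical: it uses that $\calJ$ is locally generated by a regular sequence so that the exceptional divisor of $\Bl_{\calJ}(\widetilde R\widehat\otimes_A\widetilde R)$ is the projective bundle $\bfP(\calJ/\calJ^2)=\mathrm{Proj}(R[\calJ/\calJ^2])$ over $\Spec R$, identifies $U$ with the $p$-adic completion of the affine chart where the line $IR\subset\calJ/\calJ^2$ is non-vanishing, and then quotes the standard description of such a chart as $\Spec\big(\Sym_R\Hom_R(IR,\, \widetilde J(1)/\widetilde J(1)^2\otimes_A A/I)\big)$, which is directly $\Sym_R(\Omega^1_R\{-1\})$ — no reduction to principal $I$ and no descent needed. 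You instead first reduce to $I=(d)$ principal by $(p,I)$-completely faithfully flat base change, work with the explicit presentation $(\widetilde R\widehat\otimes_A\widetilde R)[y_i]/(\xi_i-dy_i)$ of the $d$-chart (using regularity for $d$-torsion-freeness), and then identify $\widetilde U$ itself with $\Spf(\widetilde R\langle y_1,\dots,y_n\rangle^{\wedge(p,d)})$ by a universal-property argument whose key step is the same $(p,I)$-completely-\'etale lifting trick used in the proof of Lemma~\ref{L:R-one-structure}; $U$ is then obtained by reducing modulo $d$. This buys you an explicit description of $\widetilde U$ as well, but at the cost of a descent step: to pass from the principal case back to general $I$ you must check not only that the isomorphism is unchanged under rescaling $d$ by a unit (which you do) but also that the isomorphisms obtained after base change to a faithfully flat cover agree over the double overlap; this follows from your unit-rescaling observation but deserves a sentence. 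The paper's proof sidesteps all of this by never leaving $R$ and never choosing a generator.
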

\begin{proof}
	Since $\calJ=(I, \widetilde J(1))$ is locally generated by a regular sequence, $\calJ/\calJ^2$ is  a $R$-module locally free of rank $n+1$, and the exceptional divisor of  $\Bl_{\calJ}(\widetilde R\widehat{\otimes}\widetilde R)$  is isomoprhic to the projective space  $$\bfP(\calJ/\calJ^2):=\mathrm{Proj} 
	\big(R[\calJ/\calJ^2]\big)
	$$ 
	over $\Spec(R)$,
	where $R [\calJ/\calJ^2]$ denotes the symmetric power algebra of $\calJ/\calJ^2$ over $R$. Then $U$ is identified with the  $p$-adic completion of the affine open  subscheme of  $\bfP(\calJ/\calJ^2)$ where the linear function given by  $IR \subset \calJ/\calJ^2$ is invertible. Therefore, the coordinate ring of $U$ is canonically isomorphic to the $p$-adic completion of the symmetric power algebra of the $R$-module 
	\[
	\Hom_R(IR, \widetilde J(1)/\widetilde J(1)^2\otimes_{A}A/I)\cong \Omega^1_R\{-1\}. 
	\]
\end{proof}
Let $\calI^{+}:=\Omega^1_R\{-1\}R\langle \Omega^1_R\{-1\}\rangle $ be the kernel of the canonical surjection $R\langle \Omega^1_R\{-1\}\rangle   \to R$. 
Let $(R^{PD}(1), \calK)$ denote the $p$-adic completion of  the  PD-envelope with respect to $\calI^+$, and  $\calK^{[2]}\subset R^{PD}(1)$ be the closed ideal generated by elements $x^{[n]}$ for $x\in \calK$ and $n\geqslant 2$. Then the   canonical map $\Omega^1_R\{-1\}\subset \calI^+ \to \calK$ induces an isomorphism 
\begin{equation}\label{E:differential-isomorphism}
	\Omega^1_R\{-1\} \cong   \calK/\calK^{[2]}.
\end{equation}
As $\rd T_i\in \Omega^1_R$ is nothing but the image of $\xi_i=1\otimes T_i-T_i\otimes 1$, it is convenient to write 
\begin{equation}\label{E:R-PD-one}
R\langle \Omega^1_R\{-1\}\rangle =R\langle \frac{\xi_1}{I}, \cdots, \frac{\xi_n}{I}\rangle,\quad R^{PD}(1)= R\{\frac{\xi_1}{I}, \dots, \frac{\xi_n}{I}\}^{PD,\wedge}.
\end{equation}
This notation makes sense, because if $I=(d)$ is principal,  then $R\langle \Omega^1_R\{-1\}\rangle $ and $R^{PD}(1)$  are respectively the convergent power series ring and  the $p$-completed divided power polynomial ring in $n$-variables $\{\frac{\xi_i}{d}: 1\leqslant  i\leqslant n\}$ over $R$. 

\if false
	
	We now compare $R(1)=\widetilde R(1)/I\widetilde R(1)$ with some divided power polynomial algebra. 
	Let $ \Omega^1_{ R}$ be the module of continuous differential $1$-forms of $ R$ relative to $A/I$. Since $ R$ is $p$-completely \'etale over $(A/I)\langle \underline T_n\rangle $, $ \Omega^1_{ R}$ is a free $ R$-module of rank $n$ with basis $(\rd T_i: 1\leqslant  i\leqslant n)$. 
	 In general, for an $A/I$-module $N$ and an integer $i\geqslant 0$, we put $N\{i\}=N\otimes_{A/I}(I/I^2)^{\otimes i}$ and $N\{-i\}=N\otimes_R (I/I^2)^{\vee,\otimes i}$. 
	Let $\Sym_{ R}(\Omega^1_{ R}\{-1\} )$ be the symmetric algebra of the locally free $ R$-module $\Omega^1_{R}\{-1\}$. As $\rd T_i\in \Omega_{\widetilde R/A}^1$ is nothing but  the image of $\xi_i=1\otimes T_i-T_i\otimes 1$, it is convenient to write   
	$  R[ \frac{\xi_1}{I}, \dots, \frac{\xi_n}{I}]$  for  $\Sym_{ R}(\Omega^1_{ R}\{-1\})$. 
	This notation makes sense, since if $I=(d)$ is principal, then $\Omega^1_R\{-1\}$ is free of rank $n$ over $R$ with basis $(\rd T_i\otimes d^{-1}, 1\leqslant i\leqslant d)$,  and $\Sym_{R}(\Omega^1_R\{-1\})$ is isomorphic to the polynomial ring  over $R$  in $n$-variables $\frac{\xi_1}{d} ,\cdots ,\frac{\xi_n}{d}$. 
	
	Let $\calI^+ $ be the kernel of the canonical surjection 
	$R[ \frac{\xi_1}{I}, \dots, \frac{\xi_n}{I}]\to R
	$
	sending all positive symmetric powers of $\Omega^1_R\{-1\} $ to $0$. 
	 Let $(R^{PD}(1), \calK)$ denote the $p$-adic completion of the divided power envelope of   $R[ \frac{\xi_1}{I}, \dots, \frac{\xi_n}{I}]$  with respect to the ideal $\calI^+$. By definition,  there exists a divided power structure on $\calK$, and one has $R^{PD}(1)/\calK\cong R$. 
	  Let $\calK^{[2]}\subset R^{PD}(1)$ be the closed ideal generated by $x^{[m]}$ for all $x\in \calK$ and $m\geqslant 2$. Then the  canonical inclusion $\Omega^1_R\{-1\}\subset \calK$ induces an isomorphism 
	  \begin{equation}\label{E:differential-isomorphism}
	 \Omega^1_R\{-1\} \cong   \calK/\calK^{[2]}.
	  \end{equation}
  If $I=(d)$ is principal, then $R^{PD}(1)=R\{\frac{\xi_1}{d}, \cdots, \frac{\xi_n}{d}\}^{PD,\wedge}$ is the $p$-adic completion of the free divided power polynomial ring in the $n$ variables $\frac{\xi_1}{d} ,\cdots ,\frac{\xi_n}{d}$.
  
  \fi
  
\subsection{} We now compare $R(1)=\widetilde R(1)/I\widetilde R(1)$ with  $R^{PD}(1)$. Recall that $\widetilde \calK\subset \widetilde R(1)$ is the canonical projection $\widetilde R(1)\cong \widetilde R\{\frac{\xi_1}{I}, \cdots, \frac{\xi_n}{I}\}^\wedge\to \widetilde R$. 
	 For  $x\in\widetilde  \calK$, we put $\gamma_p(x):=-\frac{\delta(x)}{(p-1)!}\in \widetilde \calK$. We claim  that  
	\begin{enumerate}
		\item $p!\gamma_p(x)\equiv x^p\mod I\widetilde R(1)$ for $x\in \widetilde \calK$;
		\item $\gamma_p(ax)\equiv a^p\gamma_p(x) \mod I\widetilde R(1)$ for any $a\in \widetilde R(1)$ and $x\in \widetilde \calK$;
		
		\item $\gamma_p(x+y)\equiv\gamma_p(x)+\gamma_p(y)+\sum_{i=1}^{p-1}\frac{1}{i!(p-i)!}x^iy^{p-i}\mod I\widetilde R(1)$ for $x,y\in \widetilde \calK$.
x	\end{enumerate}
Indeed, Lemma~\ref{L:delta-phi} says that 
\[\phi(x)=x^p+p\delta(x)\in I\widetilde R(1)\]
for all $x\in \widetilde \calK$, which implies immediately statement (1). Statement (2) follows from the computation:
\[\gamma_p(ax)=a^p\gamma_p(x)+\gamma_p(a)\phi(x)\equiv a^p\gamma_p(x)\mod I\widetilde R(1),
\]
and  (3) is a direct consequence of the additive property of $\delta$. 

	Let $ {\calK'}$ denote the image of $\widetilde\calK$ in $R(1)=\widetilde R(1)/I\widetilde R(1)$. For $\bar x\in {\calK'}$, we denote by  $\gamma_p(\bar x)\in {\calK'} $  the image of $\gamma_p(x)$ for an lift $x\in \widetilde\calK$ of $\bar x$. As $\widetilde\calK\cap I\widetilde R(1)=I\widetilde\calK$, it is easy to see that  $\gamma_p(\bar x)$ does not depend on the choice of $x$.  By  the properties of $\gamma_p$ and \cite[\href{https://stacks.math.columbia.edu/tag/07GS}{Tag 07GS}]{stacks-project}, there exists a unique PD-structure on the ideal $ {\calK'}\subset  R(1)$ such that the $p$-th divided power function is given by $\gamma_p:{\calK'}\to  {\calK'}$. 
	Note that the morphism $\Spf(R(1))\to U$ in \eqref{E:blow-up-diag} gives a map of $R$-algebras
\[
R\langle \Omega^1_{R}\{-1\}\rangle \to R(1)
\]	
that sends the ideal $\calI^+$ to $\calK'$. 
	By the universal property of the PD-envelope, such a map extends uniquely to  a  morphism of  PD-algebras over $R$
\[\psi: R^{PD}(1)\to  R(1)\]
sending $\calK'$ to $\calK$.

	\begin{proposition}\label{P:PD-delta}
		Suppose that we are in Situation~\ref{A:local-coordinate}. Then $\psi$ is an isomorphism of PD-algebras over $R$ that identifies $\calK'$ with $\calK$. 
		Moreover, via this isomorphism, the two maps $\bar \delta_{i}^1: R\to  R(1)$ with $i=0,1 $, which are reduction mod $I$ of \eqref{E:simplicial-projection},  are both identified with the natural inclusion $R\to R^{PD}(1)$.
	\end{proposition}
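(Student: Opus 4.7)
My plan is to dispatch the ``moreover'' statement by direct computation, and then prove the main isomorphism via two successive reductions followed by an explicit base-case analysis.

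For the moreover part, under the identification $\widetilde R(1)\cong \widetilde R\{\xi_1/I,\ldots,\xi_n/I\}^\wedge$ of Lemma~\ref{L:R-one-structure}, the map $\delta_1^1$ is the natural inclusion while $\delta_0^1$ sends $T_i\mapsto T_i+\xi_i$. Each $\xi_i$ lies in $I\widetilde R(1)$ (clearly so after reducing to the principal case $I=(d)$, where $\xi_i=d\cdot(\xi_i/d)$, and then in general by base change), so both $\bar\delta_0^1$ and $\bar\delta_1^1$ reduce modulo $I$ to the natural inclusion $R\to R(1)$, which corresponds under $\psi$ to the structural inclusion $R\hookrightarrow R^{PD}(1)$.

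For the isomorphism, I would first reduce to $I=(d)$ principal. This is automatic in the crystalline case (Assumption~(1) of Situation~\ref{A:local-coordinate}); under Assumption~(2) it is achieved by a $(p,I)$-completely faithfully flat base change $(A,I)\to(A',(d))$ along $A_0\to\ZZ_p\{d,\delta(d)^{-1}\}^\wedge$. Both $\widetilde R(1)$ (from \cite[Lem.~3.13]{BS}) and $R^{PD}(1)$ commute with this base change, and Proposition~\ref{P:flat-descent}(1) lets me descend the claim. An analogous argument along the $(p,I)$-completely étale map $A\langle\underline T_n\rangle\to\widetilde R$ further reduces to the base case $\widetilde R=A\langle\underline T_n\rangle$ with $I=(d)$.

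In this base case, setting $y_i=\xi_i/d$, Lemma~\ref{L:delta-phi} gives $\phi(\delta^r(y_i))=\delta^r(y_i)^p+p\delta^{r+1}(y_i)\in d\widetilde R(1)$ for every $r\geqslant 0$. Reducing modulo $d$ and using the PD-identity $z^p=p!\,z^{[p]}$, this becomes $\delta^{r+1}(y_i)\equiv -(p-1)!\,(\delta^r(y_i))^{[p]}\bmod d$, which recursively expresses every $\delta^r(y_i)\bmod d$ as an integer polynomial in the divided-power monomials $\prod_j y_j^{[\alpha_j]}$. Since $\widetilde R(1)$ is topologically generated over $\widetilde R$ by $\{\delta^r(y_i)\}_{r,i}$, this yields surjectivity of $\psi$. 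The main obstacle will be injectivity in the base case, i.e., ruling out relations on $R(1)$ not already present in $R^{PD}(1)$. My plan here is to filter both sides (by PD-powers of $\calK$ on the right, and by $\calK'$-powers on the left), verify that $\psi$ respects these filtrations, and check that the induced map on associated graded is an isomorphism onto the PD-symmetric algebra on $\Omega^1_R\{-1\}$ over $R$; thereby both sides are seen to be free $p$-complete $R$-modules on the PD-monomial basis $\{\prod_j y_j^{[\alpha_j]}\}_\alpha$, and the surjection $\psi$ must be an isomorphism.
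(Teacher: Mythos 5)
Your reductions to $I=(d)$ principal and to $\widetilde R = A\langle\underline T_n\rangle$, the "moreover" part, and the surjectivity argument via Lemma~\ref{L:delta-phi} (which recovers $\delta^{r+1}(y_i)\equiv -(p-1)!\gamma_p(\delta^r(y_i))$ mod $I$ from the very definition of the PD-structure $\gamma_p = -\delta/(p-1)!$) all match the paper and are sound.

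The injectivity step, however, has a genuine gap. Filtering $R^{PD}(1)$ by $\calK^{[m]}$ and $R(1)$ by $\calK'^{[m]}$, and verifying that $\psi$ respects these filtrations, is fine; but the assertion "check that the induced map on associated graded is an isomorphism onto the PD-symmetric algebra" is precisely where all the content of the proposition lies, and you give no mechanism to establish it. The map $\gr^m(\psi)\colon \Gamma^m_R(\Omega^1_R\{-1\}) \to \calK'^{[m]}/\calK'^{[m+1]}$ is surjective for free, but the target is a priori a quotient of unknown size — $\widetilde R(1)$ is only constructed as a $(p,I)$-completely flat prismatic envelope via \cite[Lemma~3.13]{BS}, which gives no explicit basis over $\widetilde R$ and no control on possible extra relations after reduction mod $I$. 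Declaring that "thereby both sides are seen to be free $p$-complete $R$-modules on the PD-monomial basis" is circular: the freeness of $R(1)$ on that basis \emph{is} the conclusion. This is why the hypotheses in Situation~\ref{A:local-coordinate} go beyond mere existence of a lift — your outline never uses the flatness of Frobenius on $A_0/p$ nor the $p$-torsion-freeness of $A_0/I_0$, which is a sign something essential is missing.

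For comparison, the paper handles injectivity in two genuinely non-formal steps. First, in the crystalline case $I=(p)$, it does not filter at all: it constructs an explicit Frobenius lift on the $p$-completed divided-power polynomial ring $\widetilde R\{\xi_i/p\}^{PD,\wedge}$ (using that $\delta(\xi_i)$ is divisible by $p$ there, so $\phi(\xi_i/p)\in p\widetilde R\{\xi_i/p\}^{PD,\wedge}$, which lets one define $\phi$ on divided powers), and then invokes the universal property of the prismatic envelope to produce a $\delta$-map $\widetilde R(1)\to\widetilde R\{\xi_i/p\}^{PD,\wedge}$ that reduces mod $p$ to an explicit inverse of $\psi$. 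Second, for general $d$, it reduces to the crystalline case via the map $\alpha\colon A\to B:=A\{\phi(d)/p\}^\wedge$ (the $p$-completed PD-envelope composed with Frobenius), proving that the reduction $\bar\beta\colon A/(p,I)\to B/p$ is faithfully flat; here the flatness of Frobenius on $A_0/p$ and the $p$-torsion-freeness of $A_0/I_0$ from Situation~\ref{A:local-coordinate}(2) are used crucially to get the faithful flatness, and the final step deduces $\ker\psi = 0$ from $\ker\bar\psi = 0$ using $p$-adic separatedness. Neither of these ideas appears in your proposal, and I don't see how to complete your $\gr$-comparison without reinventing them.
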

	\begin{proof}

		We have to show that $\psi$ is an isomorphism. It suffices to do this after a $(p,I)$-completely faithfully flat base change in $(A,I)$. As the formation of $\psi$ commutes with such a base change, up to changing notation, we may assume that $I=(d)$ is principal. 
		In this case, we have 
		\[\widetilde R(1)\cong \widetilde R \{\frac{\xi_1}{d}, \dots, \frac{\xi_n}{d}\}^{\wedge},\]
		and $R^{PD}(1)$ is the $p$-completed  free divided power polynomial ring in the variables $\{ \frac{\xi_i}{d}: 1\leqslant i\leqslant n\}$. Since $R(1)=\widetilde R(1)/I \widetilde R(1)$ is topologically generated by the image of $\frac{\xi_i}{d}$ and the iterations of their $\gamma_p$'s, we see that $\psi$ is surjective.  It remains to see that $\psi$ is injective.  
		
		We consider first the special case   $(d)=(p)$, i.e. $(A,I)=(A,(p))$ is a crystalline prism. Let $\widetilde R\langle \frac{\xi_1}{p}, \dots, \frac{\xi_n}{p}\rangle$ be the convergent power series ring over $\widetilde R$ in the variables $\{\frac{\xi_i}{d}:1\leqslant i\leqslant n\}$, and  $\widetilde R\{ \frac{\xi_1}{p}, \dots, \frac{\xi_n}{p}\}^{PD,\wedge}$ be the corresponding $p$-completed divided power polynomial ring. 
		One considers the following Frobenius structure on $\widetilde R\langle \frac{\xi_1}{p}, \dots, \frac{\xi_n}{p}\rangle$ given by 
		\[\phi(\frac{\xi_i}{p})=\frac{\phi(\xi_i)}{p}=\frac{\xi_i^p}{p}+\delta(\xi_i)=p^{p-1}\big(\frac{\xi_i}{p}\big)^{p}+\delta(\xi_i),\]
		with $\delta(\xi_i)$ given by \eqref{E:delta-xi}. 
		Note that $\delta(\xi_i)$ is divisible by $p$ in $\widetilde R\langle \frac{\xi_1}{p}, \dots, \frac{\xi_n}{p}\rangle$ so that  $\phi(\frac{\xi_i}{p})\in p\widetilde R\langle \frac{\xi_1}{p}, \dots, \frac{\xi_n}{p}\rangle$. It follows that such a lift of Frobenius, or equivalently such a $\delta$-structure, extends uniquely to $\widetilde R\{ \frac{\xi_1}{p}, \dots, \frac{\xi_n}{p}\}^{PD,\wedge}$: Indeed, if $\phi(\frac{\xi_i}{p})=py_i$, then we put 
		\[
		\phi((\frac{\xi_i}{p})^{[n]})=\frac{p^n}{n!}y_i^{n},
		\]
		which defines a unique Frobenius structure on $\widetilde R\{ \frac{\xi_1}{p}, \dots, \frac{\xi_n}{p}\}^{PD,\wedge}$. Here,  $(\frac{\xi_i}{p})^{[n]}$ denotes the $n$-th divided power of $\frac{\xi_i}{p}$. By the universal property of the prismatic envelope $\widetilde R(1)\cong \widetilde R\{ \frac{\xi_1}{p},\cdots,\frac{\xi_n}{p}\}^{\wedge}$, there exists a unique map of $\delta$-$\widetilde R$-algebras $\widetilde R(1)\to \widetilde R\{\frac{\xi_1}{p},\dots, \frac{\xi_n}{p}\}^{PD,\wedge}.$ Reducing modulo $p$, one gets a map of divided power $R$-algebras
		\[R(1)=\widetilde R(1)/p\widetilde R(1)\to R^{PD}(1)=R\{\frac{\xi_1}{d},\dots, \frac{\xi_n}{d}\}^{PD,\wedge}\]
		which is easily seen to be the inverse of $\psi$.

		We consider now the general case of $d$.  First, we  note that the formation  of  $\psi$ commutes with the \'etale localization in $R$. Therefore, one can reduce to the case  $R=A\langle \underline{T}_n\rangle$.
		By our assumptions  in  Situation~\ref{A:local-coordinate}, there exists a map of bounded prisms $(A_0,I_0)\to (A,I)$ such that the  $\delta$-structure on $A\langle \underline{T}_n\rangle$ descends to $A_0\langle \underline{T}_n\rangle$ and 
		\begin{enumerate}
			\item[(a)] the Frobenius map on $A_0/p$ is flat;
			\item [(b)] $A_0/I_0$ is $p$-torsion free. 
		\end{enumerate}
		Since the formation of $\psi$ commutes with base change in $(A_0,I_0)$, it suffices to prove the statement for $(A_0,I_0)$. 
		Up to  changing notation, we may assume that $(A,I)$ satisfies conditions (a) and (b). 
		
		Consider the ring $B=A\{\frac{\phi(d)}{p}\}^{\wedge}$, the $p$-completed $\delta$-$A$-algebra obtained  by freely adjoining $\frac{\phi(d)}{p}$ to $A$. By \cite[Cor. 2.38]{BS}, $B$ is identified with the $p$-completed PD-envelope of $A$ with respect to $(d)$. Let $\alpha: A\to B$ denote the composite of the canonical map $A\to B$ with the Frobenius $\phi:A\to A$. Then $\alpha$ induces a map of bounded prisms $(A,I)\to (B,(p))$ since $\alpha(I)\subset (p)$.
		Reducing mod $p$, we get a factorization 
		\[\bar\alpha\colon A/p\to A/(p,I)\xra{\bar\phi} A/(p,I^p)\xra{\iota} B/p.\] 
		where the first arrow is the canonical reduction, $\bar{\phi}$ is the Frobenius, and  $\iota$ is induced by the canonical map $A\to B$. Note that $\bar\phi$ is  faithfully flat by  condition (a). We claim that $\iota$ is also faithfully flat as well. Note that  $B/p$ is the PD-envelope of $A/(p)$ with respect to the ideal $(d)$. Since condition (b) implies that $d\in A/(p)$ is nonzero divisior, it follows from   \cite[\href{https://stacks.math.columbia.edu/tag/07HC}{Tag 07HC}]{stacks-project}  that 
		\[
		B/p=A/p\{x\}^{PD}/(x-d),
		\]
		where $A/p\{x\}^{PD}$ is the free divided power ring in one variable over $A/p$. We have an ismomorphism $$A/p\{x\}^{PD}\cong A/p[x_0, x_1, \cdots]/(x_i^p: i\geqslant 0)$$
		where $x_i$ is the image $x^{[p^i]}$. Hence, we get $$B/p\cong A/(p, d^p)[x_1,x_2, \cdots]/(x^p_i:i\geqslant 1)$$
		which is clearly faithfully flat over $A/(p,d^p)$. 
		
		Put $\bar{\beta}=\iota\circ\bar\phi: A/(p,I)\to B/p$.  Then $\bar \beta$ is faithfully flat,  because so are  both $\iota$ and $\bar\phi$. 
		Let  $\bar{\psi}$ be  the reduction of $\psi $ modulo $p$, and $\psi_B =\psi\otimes 1:  R^{PD}(1)\widehat\otimes_{A}B\to R(1)\otimes_{A}B$ be the base change of $\psi$ via $\alpha: A\to B$. Then one has a commutative diagram: 
		\[\xymatrix{
			R/p\{\frac{\xi_1}{d}, \dots, \frac{\xi_n}{d}\} \ar[rr]^{\bar{\psi}}\ar[d] && \widetilde R(1)/(p,I)\widetilde R(1)\ar[d]\\
			(R/p\otimes_{A/(p,I),\bar{\beta}}B/p)\{\frac{\xi_1}{d}, \dots, \frac{\xi_n}{d}\} \ar[rr]^{\bar\psi_B=\bar\psi\otimes 1} && \big(\widetilde R(1)/(p,I)\widetilde R(1)\big){\otimes}_{A/(p,I), \bar{\beta}}B/p,
		}\]
		where the vertical maps are natural inclusions induced by $\bar \beta$, and the bottom map $\bar{\psi}_B$ is reduction of  $\psi_B$.  As the formation of $\psi$ commutes with base change in $(A,I)$, the previous discussion in the case of $(d)=(p)$ implies that $\psi_B$ is injective (hence an isomorphism), it follows that $\bar{\psi}$ is also injective. Let $M$ denote the kernel of $\psi$. 
		Since both the source and the target of $\psi$ are flat over $A/I$, it follows that  $M/pM=\ker(\bar{\psi})=(0)$. As $M$ is separate for the $p$-adic topology, it follows that $M=(0)$. This finishes the proof  of  Proposition~\ref{P:PD-delta}.

	\end{proof}

	Recall that one has a map of cosimplicial objects $\widetilde{R}^{\otimes (\bullet+1)}\to \widetilde R(\bullet)$ (Subsection~\ref{S:cosimplicial object}).
	For  integers $m\geqslant 1$, $1\leqslant i\leqslant n$ and $0\leqslant j\leqslant m$, let  
	$$T_{i,j}:=1\otimes \cdots \otimes1\otimes  T_i\otimes1\otimes \cdots\otimes 1\in \widetilde R^{\otimes (m+1)}$$ with $T_i$ sitting at $j$-th place, and $\xi_{i,j}$ be the image of $T_{i,j}-T_{i,j-1}$ in $\widetilde R(m)$ for $j\geqslant1$. 
In view of Lemma~\ref{L:R-one-structure} and Remark~\ref{R:simplicial product}, if we view $\widetilde R(m)$ as an $\widetilde R$-algebra via ${\delta}^m_m\circ \cdots\circ {\delta}^1_1$ (which corresponds to the projection $\widetilde X(m)\to \widetilde X$ to the $0$-th copy of $\widetilde X$),  one has  an isomorphism  of $\widetilde R$-algebras:
\[
\widetilde R(m)\cong \widetilde R\{\frac{\xi_{i,j}}{I}: 1\leqslant i\leqslant n, 1\leqslant j\leqslant m\}^{ \wedge}
\]
	We put 
	\[
	R^{PD}(m):=\underbrace{R^{PD}(1)\otimes_{R} R^{PD}(1)\otimes_R\cdots \otimes_RR^{PD}(1)}_{m \text{ copies}}.
	\] 
	Similarly as \eqref{E:R-PD-one}, we  write  
	\[R^{PD}(m)= R\{\frac{\xi_{i,j}}{I}: 1\leqslant i\leqslant n, 1\leqslant j\leqslant m\}^{PD, \wedge}\]
	in the coordinates $\xi_{i,j}$.
By Remark~\ref{R:simplicial product},  the isomorphism $\psi: R^{PD}(1)\cong R(1)$ induces isomorphisms 
	\begin{equation}\label{E:isom-psi}
	\psi_m: R^{PD}(m)\xra{\sim}R(m)\cong \underbrace{ R(1)\widehat{\otimes}_{ R}\cdots \widehat{\otimes}_{ R}{R}(1)}_{m \text{ copies}}
	\end{equation}
	for all integers $m\geqslant 1$. We will always use  $\psi_m$ to  identify $R^{PD}(m)$  with $R(m)$. 
	 If $I=(d)$ is principal, then 
	the cosimplicial map $\bar{\delta}^m_{k}:  R(m-1)\to  R(m)$ with $0\leqslant k\leqslant m$ (which is the reduction mod $I$ of \eqref{E:simplicial-projection}) is compatible with the PD-structure and  determined by  
	\begin{equation}\label{C:R-bullet}
	\bar{\delta}_{k}^m\colon \frac{\xi_{i,j}}{d}\mapsto 
	\begin{cases}
		\frac{\xi_{i,j+1}}{d} &\text{if } k<j,\\
		\frac{\xi_{i,j}}{d}+\frac{\xi_{i,j+1}}{d} &\text{if }k=j,\\
		\frac{\xi_{i,j}}{d}&\text{if }k>j,
	\end{cases}
	\end{equation}


	\subsection{Higgs modules}
		By a Higgs module over $R$, we mean a $p$-completely flat and derived  $p$-complete $R$-module $M$ together with a $R$-linear map 
		\[\theta: M\to M\otimes_R \Omega^1_R\{-1\}\]
		such that the induced map $\theta\wedge\theta: M\to M\otimes \Omega^2_R\{-2\}$ vanishes.  
 Denote by $\rT R:=\Hom_R(\Omega^{1}_R, R)$ the tangent bundle of $R$. Then  $\theta$ induces a map of $R$-modules 
	\[
	\varphi_{\theta}:  \rT R\{1\}=\rT R\otimes_{A/I}I/I^2\to \End_R(M). 
	\]
	The condition $\theta\wedge \theta=0$ is equivalent to saying that two endomorphisms in the image of $\varphi_{\theta}$ commute with each other. 
	Indeed, this claim can be checked after a $(p,I)$-completely faithfully flat base change in  $(A,I)$ so that we may assume that  $I=(d)$ is principal. 
	Then we have    $\Omega^1_R\{-1\}=\oplus_{i=1}^nR \frac{\rd T_i}{d}$, and we can write  $\theta=\sum_{i=1}^n\theta_i \frac{\rd T_i}{d}$ with $\theta_i\in \End_R(M)$.  Then the condition  $\theta\wedge \theta=0$ is equivalent to $\theta_i \theta_j=\theta_j \theta_i$  for all $i,j$. It is clear that  $\{\theta_i, 1\leqslant i\leqslant n\}$ generate the image of $\varphi_{\theta}$, and the claim follows. 
	
	Let $\Sym(\rT R\{1\}):=\bigoplus_{m\geqslant 0}\Sym^m(\rT R\{1\})$ be the symmetric algebra of $\rT R\{1\}$ over $R$. Then $\varphi_{\theta}$ extends to a homomorphism of $R$-algebras:
	\[
	\varphi_{\theta}^\bullet\colon	\Sym( \rT R\{1\})\to \End_R(M).
	\]
	
	\begin{definition}
		A Higgs module $(M,\theta)$ is \emph{topologically quasi-nilpotent} if for each $x\in M$, the submodule $\varphi_{\theta}^\bullet (\Sym^m (\rT\{1\})\cdot x$  of $M$ tends to $0$ as $m\to +\infty$, i.e. for any integer $c>0$ there exists $N>0$ such that $\varphi_{\theta}^\bullet (\Sym^m (\rT\{1\})\cdot x\subset p^c M$ for all $m\geqslant N$.
		Let $\Higgs^{\wedge}(R)$ denote  the category of topologically quasi-nilpotent Higgs modules over $R$.
	\end{definition}
	If $I=(d)$ is principal and $\theta=\sum_{i=1}^n\theta_i \frac{\rd T_i}{d}$ as above, then $(M,\theta)$ is topologically quasi-nilpotent if and only if $\theta^{\mbar}(x)$  tends to $0$ as $|\mbar|:=\sum_{i=1}^nm_i\to +\infty$ for all $x\in M$ and $\mbar\in \NN^n$, where we put
	\[\theta^{\mbar}=\prod_{i=1}^n\theta_i^{m_i}\in \End_R(M),\]
	with  $\theta^{\mbar}=\id$ for $\mbar =(0,\cdots, 0)$.

	\subsection{Stratification and Higgs modules}
	Let $(M,\epsilon)$ be an object of $\Strat( R, R(1))$ (Definition~\ref{D:Obar-stratification}). 
	Note that we have an isomorphism:
	\begin{align*} \Hom_{ R(1)}(M\widehat{\otimes}_R  R(1), M\widehat{\otimes}_R R(1))&\cong \Hom_{R}(M, M\widehat{\otimes}_R R(1))\\
		&\cong \Hom_R(M, M\widehat{\otimes}_R R^{PD}(1))
	\end{align*}
	where the last isomorphism is Proposition~\ref{P:PD-delta}.
	When no confusions arise, we will still  denote  by  $\epsilon$ the image of $\epsilon $ in $\Hom_R(M, M\widehat{\otimes}_R R^{PD}(1))$. 
	Let $$\iota: M\to M\widehat{\otimes}_RR^{PD}(1)$$ 
	be the natural inclusion induced by the structural map $R\to R^{PD}(1)$.
	By the definition of stratification, we have $(\epsilon-\iota)(M)\subset M\widehat\otimes_{R}\calK$. We denote by 
	\[\theta_{\epsilon}: M\to M{\otimes}_R \calK/\calK^{[2]}\cong M{\otimes}_R\Omega_{R}^1\{-1\}\]
	the reduction of $\epsilon-\iota$ modulo $\calK^{[2]}$, where the second isomorphism is \eqref{E:differential-isomorphism}.
	
	\begin{proposition}\label{P:Higgs-stratification}
		The functor  
		$(M,\epsilon)\mapsto (M,\theta_{\epsilon})$ establishes an equivalence of categories 
		\[\Strat(R, {R}(1))\xra{\sim} \Higgs^{\wedge}(R).\]
	\end{proposition}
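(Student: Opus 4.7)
The plan is to reduce to the case where $I=(d)$ is principal, where explicit PD-coordinates make the correspondence transparent, and then to invoke faithfully flat descent for general $I$. Concretely, pick a $(p,I)$-completely faithfully flat map $(A,I)\to (A',I')$ with $I'=IA'$ principal, set $\widetilde R'=\widetilde R\widehat\otimes_A A'$ and $R'=\widetilde R'/I\widetilde R'$. By Remark~\ref{R:fiber-product} one has $\widetilde R'(1)\cong \widetilde R(1)\widehat\otimes_A A'$, so the base-change functor sends $\Strat(R, R(1))$ to $\Strat(R', R'(1))$ and, since $\Omega^1_{R'}\{-1\}=\Omega^1_R\{-1\}\otimes_R R'$, also  $\Higgs^{\wedge}(R)$ to $\Higgs^\wedge(R')$, compatibly with the functor $(M,\epsilon)\mapsto (M,\theta_\epsilon)$ (the identification \eqref{E:differential-isomorphism} is preserved). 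By Proposition~\ref{P:flat-descent}(2) both sides satisfy descent along this cover, so it suffices to treat the principal case.

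Assume now $I=(d)$. Then $R^{PD}(1)= R\{\xi_1/d,\dots,\xi_n/d\}^{PD,\wedge}$ has a topological $R$-basis formed by the divided-power monomials $(\xi/d)^{[\mbar]}:=\prod_i(\xi_i/d)^{[m_i]}$ for $\mbar\in\NN^n$, so any $\epsilon\in\Strat(R,R(1))$, regarded via Proposition~\ref{P:PD-delta} as an $R$-linear map $\epsilon:M\to M\widehat\otimes_R R^{PD}(1)$ that is the identity modulo $\calK$, admits a unique expansion
\[
\epsilon(x)=\sum_{\mbar\in\NN^n}\epsilon_{\mbar}(x)\cdot(\xi/d)^{[\mbar]}
\]
with $R$-linear operators $\epsilon_\mbar:M\to M$ such that for every $x\in M$ and every $c\geqslant 1$, $\epsilon_\mbar(x)\in p^cM$ for all but finitely many $\mbar$. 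The rigidification $\mu^*\epsilon=\id_M$ gives $\epsilon_0=\id_M$, and the definition of $\theta_\epsilon$ via \eqref{E:differential-isomorphism} reads $\theta_\epsilon=\sum_i\epsilon_{e_i}\otimes(\rd T_i/d)$. Setting $\theta_i:=\epsilon_{e_i}$ and expanding the cocycle $\bar\delta_1^{2,*}(\epsilon)=\bar\delta_2^{2,*}(\epsilon)\circ\bar\delta_0^{2,*}(\epsilon)$ in the PD-variables $\xi_{i,1}/d,\xi_{i,2}/d$ of $R^{PD}(2)$, using the divided-power binomial formula
$(\xi_1/d+\xi_2/d)^{[\mbar]}=\sum_{\mbar'+\mbar''=\mbar}(\xi_1/d)^{[\mbar']}(\xi_2/d)^{[\mbar'']}$
together with formula \eqref{C:R-bullet}, and identifying coefficients yields the multiplicativity
\[
\epsilon_{\mbar'+\mbar''}=\epsilon_{\mbar'}\circ\epsilon_{\mbar''}\quad\text{for all }\mbar',\mbar''\in\NN^n.
\]
This forces the $\theta_i$ to commute pairwise and $\epsilon_\mbar=\prod_i\theta_i^{m_i}$, which is exactly the condition $\theta_\epsilon\wedge\theta_\epsilon=0$; the $p$-adic decay of $(\epsilon_\mbar(x))_\mbar$ is precisely topological quasi-nilpotence of the Higgs field. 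Hence the functor is well-defined and faithful.

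For the essential surjectivity, given a topologically quasi-nilpotent Higgs module $(M,\theta)$ with commuting components $\theta_i\in\End_R(M)$, define $\epsilon(x):=\sum_{\mbar}\theta^{\mbar}(x)\cdot(\xi/d)^{[\mbar]}$, which converges in $M\widehat\otimes_R R^{PD}(1)$ by topological quasi-nilpotence; extended $R(1)$-linearly this gives a morphism $M\widehat\otimes_R R(1)\to M\widehat\otimes_R R(1)$. Using the divided-power binomial formula exactly as above shows that $\epsilon$ satisfies the cocycle identity and that $\mu^*\epsilon=\id_M$ (where $\mu$ kills $\xi/d$), while bijectivity of $\epsilon$ follows by constructing its inverse from $(-\theta)$ via the same formula; the two constructions are inverse by inspection of the expansions. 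The main obstacle I anticipate is book-keeping in the descent step, namely verifying that a stratification $\epsilon'$ on $M'=M\widehat\otimes_RR'$ satisfying the cocycle condition descends to $M$ in a way compatible with the Higgs field (this is formal from Proposition~\ref{P:flat-descent}(2), but requires checking that both the coboundary $\epsilon-\iota$ and its reduction modulo $\calK^{[2]}$ descend, which follows from the $(p,I)$-complete faithful flatness of $A\to A'$ and the functoriality of the isomorphism $\psi$).
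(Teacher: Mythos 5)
Your proposal is correct and follows essentially the same route as the paper: reduce to the principal case $I=(d)$ by $(p,I)$-completely faithfully flat base change, expand $\epsilon$ in the divided-power basis $(\xi/d)^{[\mbar]}$ of $R^{PD}(1)$, derive the multiplicativity $\epsilon_{\mbar'+\mbar''}=\epsilon_{\mbar'}\circ\epsilon_{\mbar''}$ from the cocycle identity, and build the quasi-inverse by the formula $\epsilon(x)=\sum_{\mbar}\theta^{\mbar}(x)(\xi/d)^{[\mbar]}$ with inverse coming from $(-\theta)$. The only stylistic difference is that you frame the reduction as a categorical descent statement (both sides are stacks along $A\to A'$), whereas the paper invokes faithfully flat base change more locally, to check the Higgs conditions and to descend the constructed $\epsilon$; both amount to the same use of Proposition~\ref{P:flat-descent}.
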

	\begin{proof}
		We prove first that, for any object $(M, \epsilon)$ of $\Strat( R, R(1))$, the attached object $(M, \theta_{\epsilon})$ is indeed a topologically quasi-nilpotent Higgs module over $R$. Up to base change to a $(p,I)$-completely faithfully flat $\delta$-$A$-algebra, we may assume that $I=(d)$ is principal for a distinguished element $d$.
		We write as usual that 
		\[\theta_{\epsilon}=\sum_{i=1}^n\theta_{\epsilon, i}\,\frac{\rd T_i}{d}.\]
		By Proposition~\ref{P:PD-delta}, we have  
		\[
		M\widehat{\otimes}_RR(1)\cong M\widehat{\otimes}_RR^{PD}(1)=\bigg\{\sum_{\mbar \in \NN^n} x_{\mbar} \bigg(\frac{\xi_{\bullet}}{d}\bigg)^{[\mbar]}: x_{\mbar}\in M, x_{\mbar}\to 0 \text{ as $|\mbar|\to \infty$}\bigg\}
		\]
		where we put \[\bigg(\frac{\xi_{\bullet}}{d}\bigg)^{[\mbar]}=\prod_{i=1}^n\bigg(\frac{\xi_i}{d}\bigg)^{[m_i]}.\]
		For $x\in M$, we write 
		\[
		\epsilon(x)=\sum_{\mbar\in \NN^n}\Theta_{\mbar}(x) \bigg(\frac{\xi_{\bullet}}{d}\bigg)^{[\mbar]}\in M\widehat{\otimes}_RR^{PD}(1)
		\]
		for some $\Theta_{\mbar}\in \End_R(M)$ with $\Theta_{\mbar}=\id_M$ if $\mbar=(0,\dots, 0)$. By the definition of $\theta_{\epsilon}$, it is clear that $\theta_{\epsilon,i}=\Theta_{\underline e_i}$ with $\underline{e}_i\in \NN^n$ the element with the $i$-th component equal to $1$ and others components equal to $0$.
		To see that $(M, \theta_{\epsilon})$ is an object of $\Higgs^{\wedge}(R)$, it suffices to show that 
		\begin{equation}\label{E:commutativity-theta}
			\theta_{\epsilon,i}\theta_{\epsilon,j}=\theta_{\epsilon, j}\theta_{\epsilon,i},\quad 
			\Theta_{\mbar}=\prod_{i=1}^n\theta_{\epsilon,i}^{m_i}=\theta_{\epsilon}^{\mbar}.\end{equation}
		According to \eqref{E:isom-psi} and \eqref{C:R-bullet}, we have  $ R(2)\cong R^{PD}(2)=R\{\frac{\xi_{i,j}}{d}: 1\leqslant i\leqslant n, 1\leqslant j\leqslant 2\}$
		and the maps $\bar\delta^{2}_{j}:  R(1)\to  R(2)$ with $j=0,1,2$ are given by 
		\begin{align*}
			&\bar\delta^2_0(\frac{\xi_i}{d})=\frac{\xi_{i,2}}{d},
			&\bar\delta^2_{1}(\frac{\xi_i}{d})&=\frac{\xi_{i,1}}{d}+\frac{\xi_{i,2}}{d},
			&\bar\delta^2_{2}(\frac{\xi_i}{d})&=\frac{\xi_{i,1}}{d}.
		\end{align*}
		for all $1\leqslant i\leqslant n$. Then for all $x\in M$, we have  
		\begin{align}\label{E:calcul-cocycle}
			\bar \delta^{2,*}_{1}(\epsilon(x))&=\sum_{\mbar \in \NN^n}\Theta_{\mbar}(x)\bigg(\frac{\xi_{\bullet, 1}}{d}+\frac{\xi_{\bullet,2}}{d}\bigg)^{[\mbar]}\\
			&=\sum_{\mbar_1\in \NN^n}\sum_{\mbar_2\in \NN^n}\Theta_{\mbar_1+\mbar_2}(x) \bigg(\frac{\xi_{\bullet, 1}}{d}\bigg)^{[\mbar_1]}\bigg(\frac{\xi_{\bullet, 2}}{d}\bigg)^{[\mbar_2]},\nonumber \\
			\bar \delta^{2,*}_{2}(\epsilon)\big(\bar{\delta}_{0}^{2,*}(\epsilon(x))\big)&=\sum_{\mbar_1\in \NN^n}\sum_{\mbar_2\in \NN^n} \Theta_{\mbar_1}(\Theta_{\mbar_2}(x)) \bigg(\frac{\xi_{\bullet,1}}{d}\bigg)^{[\mbar_1]}\bigg(\frac{\xi_{\bullet,2}}{d}\bigg)^{[\mbar_2]}.\nonumber 
		\end{align}
		Then the cocycle condition $\bar \delta^{2,*}_1(\epsilon)=\bar\delta^{2,*}_2(\epsilon)\circ \bar \delta_0^{2,*}(\epsilon)$ is equivalent to saying that 
		\[\Theta_{\mbar_1+\mbar_2}=\Theta_{\mbar_1}\circ \Theta_{\mbar_2}, \]
		for all $\mbar_1,\mbar_2\in \NN^n$, from which \eqref{E:commutativity-theta} follows immediately.
		
		To finish the proof of the Proposition, it suffices to construct a functor quasi-inverse to $(M,\epsilon )\mapsto (M,\theta_{\epsilon})$.   Let $(M,\theta)$ be an object of $\Higgs^{\wedge}(R)$. We need to construct a stratification $\epsilon$ on $M$ over $ R(1)$. By the usual descent argument, we may reduce the problem to the case when $I=(d)$ is principal.
		For any $x\in M$, we put 
		\begin{equation}\label{E:formula-esp}
			\epsilon(x):=\sum_{\mbar\in \NN^n}\theta^{\mbar}(x)\bigg(\frac{\xi_{\bullet}}{d}\bigg)^{[\mbar]}\in M\widehat{\otimes}_RR^{PD}(1)\cong M\widehat{\otimes}_R R(1),\end{equation}
		which is well defined by the topological quasi-nilpotence of $(M,\theta)$.
		This defines an element 
		\[\epsilon\in  \Hom_R(M,M\widehat\otimes_RR(1))\cong \End_{R(1)}\big(M\widehat\otimes_RR(1)\big).\]
		We prove first  that $\epsilon$ is an isomorphism of $M\widehat\otimes_R R(1)$. 
		Let  $\epsilon'$ be the endomorphism of $M\widehat\otimes_RR(1) $ defined by 
		\[\epsilon'(x)=\sum_{\mbar\in \NN^n}(-1)^{|\mbar|}\theta^{\mbar}(x)\bigg(\frac{\xi_{\bullet}}{d}\bigg)^{[\mbar]}\quad \forall x\in M.\]
		We claim that $\epsilon'$ is the inverse of $\epsilon.$. 
		Indeed, we have 
		\begin{align}\label{E:epsilon-formula}
			\epsilon'(\epsilon(x))&=\epsilon'\bigg(\sum_{\mbar \in \NN^n}\theta^{\mbar}(x)\bigg(\frac{\xi_{\bullet}}{d}\bigg)^{[\mbar]} \bigg)\\
			&=\sum_{\mbar\in \NN^n}\sum_{\mbar'\in \NN^n}(-1)^{|\mbar'|}\theta^{\mbar'}(\theta^{\mbar}(x))\bigg(\frac{\xi_{\bullet}}{d}\bigg)^{[\mbar']}\bigg(\frac{\xi_{\bullet}}{d}\bigg)^{[\mbar]}\nonumber \\
			&=\sum_{\underline{s}\in \NN^n}\theta^{\underline{s}}(x)\sum_{\mbar'\in \NN^n, \mbar' \leqslant \underline{s}} (-1)^{|\mbar'|} \bigg(\frac{\xi_{\bullet}}{d}\bigg)^{[\mbar']}\bigg(\frac{\xi_{\bullet}}{d}\bigg)^{[\underline{s}-\mbar']}.\nonumber
		\end{align}
		Here, the notation $\mbar'\leqslant \underline{s}$  in the last equality means $m'_i\leqslant s_i$ for all $1\leqslant i\leqslant n$. Note that if $s_i>0$ for some $i$, then  we have 
		\[	\sum_{m'_i=0}^{s_i} (-1)^{m'_i} \bigg(\frac{\xi_{i}}{d}\bigg)^{[m'_i]}\bigg(\frac{\xi_{i}}{d}\bigg)^{[s_i-m'_i]}=\sum_{m'_i=0}^{s_i}\bigg(\frac{\xi_i}{d}-\frac{\xi_i}{d}\bigg)^{s_i}=0.
		\]
		Therefore, the contribution of the terms with $|\underline s|>0$ to \eqref{E:epsilon-formula} is zero, and we deduce that $\epsilon'(\epsilon(x))=x$, i.e. $\epsilon'\circ\epsilon=\id$. A similar computation shows   as well that $\epsilon\circ\epsilon'=\id$. 
		It remains to check that $\epsilon $ verifies the cocycle condition $\bar \delta^{2,*}_1(\epsilon)=\bar\delta^{2,*}_2(\epsilon)\circ \bar \delta_0^{2,*}(\epsilon)$.  By the computation \eqref{E:calcul-cocycle}, this   follows immediately from  the obvious fact that  $\theta^{\mbar_1+\mbar_2}=\theta^{\mbar_1}\circ\theta^{\mbar_2}$. This construction gives a functor $ \Higgs^{\wedge}(R)\to \Strat(R, R(1))$, which is easily seen to be  a quasi-inverse to $(M, \epsilon)\mapsto (M, \theta_{\epsilon})$.
	\end{proof}

	Combining now Propositions~\ref{P:crystal-stratification} and \ref{P:Higgs-stratification}, we obtain the following
	\begin{theorem}\label{T:crystals-Higgs}
		Suppose that we are in Situation~\ref{A:local-coordinate}. Then we have a sequence of equivalence of categories
		\[\CR((X/A)_{\prism}, \overline{\cO}_{\prism})\xra{\overline{\ev}_{\widetilde X}}\Strat(R,R(1)) \xra{\mathrm{Prop.}\; \ref{P:Higgs-stratification}}\Higgs^{\wedge}(R).\] 
	\end{theorem}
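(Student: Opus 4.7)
The theorem is essentially a formal consequence of two equivalences already established in the paper, and I would prove it simply by composing them. Let me outline the plan.

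The first arrow $\overline{\ev}_{\widetilde X}$ is the content of Proposition~\ref{P:crystal-stratification}, whose proof is given under the weaker hypothesis of Assumption~\ref{A:local-lift}. Since Situation~\ref{A:local-coordinate} is a special case of Assumption~\ref{A:local-lift} (the $(p,I)$-completely \'etale map to $\Spf(A/I\langle\underline T_n\rangle)$ produces the required $\delta$-lift $\widetilde R$), this equivalence applies directly. I would begin by invoking this fact, noting that nothing about the coordinate structure is needed at this stage --- only the existence of a lift making $\widetilde X$ a final-object cover of $(X/A)_\prism^\sim$.

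The second arrow $\Strat(R, R(1)) \xrightarrow{\sim} \Higgs^{\wedge}(R)$ is exactly the statement of Proposition~\ref{P:Higgs-stratification}, which was formulated and proved under Situation~\ref{A:local-coordinate}. The key input it uses is the identification $\psi_m : R^{PD}(m)\xrightarrow{\sim} R(m)$ of Proposition~\ref{P:PD-delta} (together with \eqref{E:isom-psi}), and this identification genuinely requires the coordinate hypotheses, because the formulas \eqref{E:formula-esp} for the stratification associated to a Higgs field are expressed in the divided-power variables $\frac{\xi_i}{d}$ and match the PD-polynomial description of $R(1)$ supplied by Proposition~\ref{P:PD-delta}.

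Therefore the plan reduces to the observation that both hypotheses are in force in Situation~\ref{A:local-coordinate}, and the composite of the two equivalences gives the claimed chain. There is no hard step: the nontrivial content --- the identification of $R(1)$ with the PD-envelope $R^{PD}(1)$ via $\psi$, and the descent of stratifications to prismatic crystals via faithfully flat descent (Proposition~\ref{P:flat-descent}) --- has already been carried out. I would close the proof with a single sentence: \emph{The first equivalence is Proposition~\ref{P:crystal-stratification} applied to the lift $\widetilde R$ of Situation~\ref{A:local-coordinate}, and the second is Proposition~\ref{P:Higgs-stratification}; their composition yields the theorem.}
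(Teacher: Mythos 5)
Your proposal is correct and matches the paper's own argument: the theorem is obtained by composing the equivalence $\overline{\ev}_{\widetilde X}$ of Proposition~\ref{P:crystal-stratification} (valid since Situation~\ref{A:local-coordinate} implies Assumption~\ref{A:local-lift}) with the equivalence of Proposition~\ref{P:Higgs-stratification}. Nothing further is needed.
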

	
	\begin{remark}
		(1)	It is natural to expect that there still exists an equivalence of categories between $\CR((X/A)_{\prism}, \overline{\cO}_{\prism})$ and $\Higgs^{\wedge}(R)$ under the more general assumption~\ref{A:local-lift}.

		(2) Gros--Le Strum--Quir\'os \cite[Cor. 6.6]{GSQ} and Morrow--Tsuji  \cite[Thm.~3.2]{MS} give  similar descriptions of ${\cO}_{\prism}$-crystals when $(A,I)$ is a bounded prism over $(\ZZ_p[[q-1]], ([p]_q))$ with $[p]_q=\frac{q^p-1}{q-1}$. In these special cases, Theorem~\ref{T:crystals-Higgs} is compatible with their results after modulo $I$.
	\end{remark}

	We now describe  the cohomology of an $\overline{\cO}_{{\prism}}$-crystal in terms of  the de Rham complex of its associated Higgs module. 
	
	\begin{theorem}\label{T:cohomology-Higgs}
		Suppose that we are in Situation~\ref{A:local-coordinate}.
		Let $\calE$ be an $\overline{\cO}_{\prism}$-crystal, and $(\calE_{\widetilde X},\theta)$ be its associated object of $\Higgs^{\wedge}(R)$ via Theorem~\ref{T:crystals-Higgs}. Then $R\nu_{X/A,*}(\calE)$ is computed by the de Rham complex of $(\calE_{\tilde X}, \theta)$:
		\[ \DR(\calE_{\widetilde X},\theta)\colon =(\calE_{\widetilde X}\xra{\theta}\calE_{\widetilde X}\otimes_R\Omega^1_R\{-1\}\xra{\theta}\cdots \xra{\theta} \calE_{\widetilde X}\otimes_{R}\Omega^n_R\{-n\})
		\]
		
	\end{theorem}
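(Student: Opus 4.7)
The plan is to combine the \v{C}ech--Alexander description of $R\Gamma((X/A)_\prism, \calE)$ from Proposition~\ref{P:CA-complex} with an explicit divided-power Poincar\'e-lemma comparison to the Higgs de Rham complex. Since $X = \Spf(R)$ is affine and the Higgs module $(\calE_{\widetilde X}, \theta)$ together with the comparison map constructed below are compatible with \'etale localization on $R$, it suffices to establish the quasi-isomorphism of global complexes; the statement on $X_{\et}$ then follows by sheafification. I first reduce by a $(p,I)$-completely faithfully flat base change in $(A,I)$ (using Lemma~\ref{L:flat-local}) to the case $I = (d)$, so that $R^{PD}(m) \cong R\{\xi_{i,j}/d\}^{PD,\wedge}$ is a genuine $p$-completed PD-polynomial algebra in the $nm$ variables $\xi_{i,j}/d$. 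Using the identifications $\psi_m$ of \eqref{E:isom-psi} together with the crystal property of $\calE$, Proposition~\ref{P:CA-complex} expresses $R\Gamma((X/A)_\prism, \calE)$ as
\[
\calE_{\widetilde X} \xrightarrow{d^0} \calE_{\widetilde X} \widehat\otimes_R R^{PD}(1) \xrightarrow{d^1} \calE_{\widetilde X} \widehat\otimes_R R^{PD}(2) \xrightarrow{d^2} \cdots,
\]
whose differentials are alternating sums of the cosimplicial face maps \eqref{C:R-bullet}; by Proposition~\ref{P:Higgs-stratification} (see \eqref{E:formula-esp}), the degree-zero differential is $d^0(x) = \sum_{\mbar \in \NN^n \setminus \{0\}} \theta^{\mbar}(x)\,(\xi_\bullet/d)^{[\mbar]}$.

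Second, I would define a morphism of complexes
\[
\pi^\bullet \colon \CA(\widetilde X(\bullet), \calE) \longrightarrow \DR(\calE_{\widetilde X}, \theta)
\]
by setting $\pi^0 = \mathrm{id}$ and, for $m \ge 1$, declaring $\pi^m$ to annihilate every PD-monomial in the $\xi_{i,j}/d$ except those of the form $\prod_{j=1}^{m} (\xi_{i_j,j}/d)$, which is sent to $\rd T_{i_1}/d \wedge \cdots \wedge \rd T_{i_m}/d$ in $\Omega^m_R\{-m\}$. That $\pi^\bullet$ intertwines differentials is a direct check using \eqref{E:formula-esp} and \eqref{C:R-bullet}: in the lowest degree $\pi^1 \circ d^0 = \theta$ by inspection, and in higher degrees, the ``inner'' face operators $\bar\delta^{m+1}_k$ with $0 < k \le m$ applied to a multilinear input either produce a PD-factor of order $\ge 2$ (killed by $\pi^{m+1}$) or, on the remaining linear contributions, cancel pairwise in the alternating sum, so that only $\bar\delta^{m+1}_0$ survives and reproduces the de Rham differential $\theta \wedge (\cdot)$.

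The main and most delicate step is to prove that $\pi^\bullet$ is a quasi-isomorphism, which amounts to a divided-power Poincar\'e lemma with coefficients in the Higgs module. I would filter $\calE_{\widetilde X} \widehat\otimes_R R^{PD}(\bullet)$ by the total PD-weight $w = \sum_{i,j} a_{i,j}$ of monomials $\prod (\xi_{i,j}/d)^{[a_{i,j}]}$ and study the induced map on the associated graded. The weight-$w$ piece decouples into a PD-Koszul-type complex on $nm$ variables whose cohomology, by a direct combinatorial computation, is concentrated in degree $w$ and equal to $\calE_{\widetilde X} \otimes_R \Omega^w_R\{-w\}$, precisely matching the de Rham side. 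Equivalently, one constructs an explicit contracting homotopy on the kernel of $\pi^\bullet$ that reduces either the number of nonlinear PD-factors or the number of repeated column indices, with topological quasi-nilpotence of $(\calE_{\widetilde X}, \theta)$ guaranteeing convergence in the $p$-adic topology. The main obstacle is precisely this combinatorial Poincar\'e lemma in many divided-power variables, together with the careful tracking of how the alternating differentials interact with the weight filtration and the Higgs action on $\calE_{\widetilde X}$. Once this step is complete, the case of general (non-principal) $I$ follows by faithfully flat descent via Proposition~\ref{P:flat-descent}, since every object and every arrow in the argument commutes with base change in $(A,I)$.
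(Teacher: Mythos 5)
Your route is genuinely different from the paper's, which follows the Bhatt--de Jong strategy: there one introduces a na\"ive double complex $\scrC^{i,j}=\calE_{\widetilde X}\otimes_R\Omega^j_{R(i)}$ whose two edge complexes are exactly the \v{C}ech--Alexander complex ($j=0$) and $\DR(\calE_{\widetilde X},\theta)$ ($i=0$), and the identification is obtained \emph{indirectly} by contracting the double complex in two ways: each row $\scrC^{i,\bullet}$ is shown to retract onto $\DR(\calE_{\widetilde X},\theta)$ via the classical relative divided-power Poincar\'e lemma (Lemma~\ref{L:poincare-lemma}), and each column $\scrC^{\bullet,j}$ with $j\geqslant 1$ is killed by the cosimplicial contracting homotopy of Lemma~\ref{L:trivial-simplicial}. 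No explicit chain map between the two edge complexes is ever written down. You instead construct an explicit projection $\pi^\bullet$ from $\CA(\widetilde X(\bullet),\calE)$ to $\DR(\calE_{\widetilde X},\theta)$ by retaining only the multilinear PD-monomials, and then try to prove directly that $\pi^\bullet$ is a quasi-isomorphism. That construction of $\pi^\bullet$ is essentially correct, though your description of why the inner faces die is slightly off: applied to a multilinear degree-$m$ monomial, $\bar\delta^{m+1}_k$ for $k\geqslant 1$ (and for $k=m+1$) produces terms of total PD-degree $m$, which are annihilated by $\pi^{m+1}$ simply for a degree/column-count reason rather than by ``pairwise cancellation''; the pairwise cancellations and ``order $\geqslant 2$'' mechanisms only come in for non-multilinear inputs, where they are indeed needed and indeed occur.

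The real issue is that the central step -- proving $\pi^\bullet$ is a quasi-isomorphism -- is only described, not carried out. Both alternatives you sketch (filtering by PD-weight and identifying the graded complex, or building an explicit contracting homotopy on $\ker\pi^\bullet$) are exactly where the work lies. The weight-graded complex is the $\theta=0$ version, and computing its cohomology amounts to showing that the bar-type complex of $R^{PD}(\bullet)$ has cohomology $\bigoplus_m\Omega^m_R\{-m\}[-m]$; this is a nontrivial multi-variable divided-power combinatorics statement (essentially the Hodge--Tate comparison for the trivial crystal, which the paper instead deduces \emph{from} this theorem in Remark~4.28, so you cannot cite it). You have reduced the theorem to this lemma but left it as ``a direct combinatorial computation.'' So while the overall plan is plausible and genuinely different from the paper's, the proposal as written has a real gap: the key step is not proven, and unlike the paper's argument -- which sidesteps explicit homotopies entirely by the two-way degeneration of the double complex -- your route requires doing that computation head-on.
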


\if false 

Before proving this Theorem, we give an application to the base change properties of cohomology of $\cO_{\prism}$-crystals.

	\begin{theorem}[base change]\label{C: base change}
	Let $X$ be a smooth $p$-adic formal scheme over $\Spf(A/I)$.
	Let $\calF$ be an $\cO_{\prism}$-crystal on $(X/A)_{\prism}$. 
	Let $(A,I)\to (A',I')$ be a morphism of bounded prisms of finite tor-dimension,  $X'=X\times_{\Spf(A/I)}\Spf(A'/I')$ and $f:X'\to X$  be the natural projection.
	 Consider the following commutative diagram of morphisms of topoi:
	\[
	\xymatrix{
		(X'/A')^{\sim}_{\prism}\ar[r]^{f_{\prism}}\ar[d]_{\nu'} & (X/A)^{\sim}_{\prism}\ar[d]^{\nu}\\
		X_{\Et}^{'\sim}\ar[r]^{f_{\Et}} &X^{\sim}_{\Et}}
	\]
	Let $\calF'=f_{\prism}^*\calF$ be the pullback of $\calF$ in the sense of $\cO_{\prism}$-crystals. Then there exists a canonical base change  isomorphism 
	\begin{equation*}f_{\Et}^{-1}R\nu_{*}(\calF)\otimes^L_AA')^{\wedge}\xra{\sim} R\nu'_{*}( \calF') 
	\end{equation*}
	is  an isomorphism.
\end{theorem}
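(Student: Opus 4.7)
The plan is to reduce to the local setting of Situation~\ref{A:local-coordinate} and verify the base change isomorphism term-by-term on the Čech--Alexander complex, in the spirit of the proof of Corollary~\ref{C:weak base change}. Since both $R\nu_{*}(\calF)$ and $R\nu'_{*}(\calF')$ localize compatibly on $X_{\Et}$, the question is étale-local on $X$, and we may assume $X = \Spf(R)$ with a lift $\widetilde R$ over $A$ as in Situation~\ref{A:local-coordinate}. Setting $\widetilde R' := \widetilde R \widehat\otimes_A A'$, the base-change compatibility of the prismatic envelope construction \cite[Lemma~3.13]{BS} gives canonical isomorphisms $\widetilde R'(m) \simeq \widetilde R(m) \widehat\otimes_A A'$ for all $m\geqslant 0$, and the crystal property of $\calF$ identifies $\calF'(\widetilde X'(m)) \simeq \calF(\widetilde X) \widehat\otimes_{\widetilde R} \widetilde R(m) \widehat\otimes_A A'$.

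By Proposition~\ref{P:CA-complex}, $R\nu_{X/A,*}(\calF)$ and $R\nu'_{X'/A',*}(\calF')$ are computed by the respective Čech--Alexander complexes $\CA(\widetilde X(\bullet),\calF)$ and $\CA(\widetilde X'(\bullet),\calF')$. The base change map is therefore induced term-wise by the canonical arrow
\[
\bigl(\calF(\widetilde X)\widehat\otimes_{\widetilde R}\widetilde R(m) \otimes^L_A A'\bigr)^{\wedge} \longrightarrow \calF(\widetilde X)\widehat\otimes_{\widetilde R}\widetilde R(m) \widehat\otimes_A A',
\]
and it suffices to show this is a quasi-isomorphism for every $m \geqslant 0$. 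Combining the $(p,I)$-complete flatness of $\calF(\widetilde X)$ over $\widetilde R$ (the crystal axiom) with that of $\widetilde R(m)$ over $A$ (intrinsic to the prismatic envelope) via Lemma~\ref{L:stability flatness}(1), we see that $\calF(\widetilde X)\widehat\otimes_{\widetilde R}\widetilde R(m)$ is $(p,I)$-completely flat over $A$. The finite tor-dimension of $A \to A'$ then ensures that $\otimes^L_A A'$ applied to such a module is concentrated in degree zero, so its derived $(p,I')$-completion coincides with the classical $(p,I')$-adic completion by Proposition~\ref{P:completely-flat}(2); by another application of Lemma~\ref{L:stability flatness}(1), this completion is exactly $\calF(\widetilde X)\widehat\otimes_{\widetilde R}\widetilde R(m) \widehat\otimes_A A'$. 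Passing to the associated simple complexes yields the desired isomorphism of complexes of $\cO_{X'}$-modules.

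The main obstacle is controlling the interplay between derived completion and derived tensor product: this is where the $(p,I)$-complete flatness of the Čech--Alexander terms over $A$ combines with the finite tor-dimension hypothesis on $A \to A'$. Without the latter, $\otimes^L_A A'$ applied to a $(p,I)$-completely flat module need not be concentrated in degree zero, and the term-wise identification of Čech--Alexander complexes would break down. A secondary technical point is checking that the Čech--Alexander computation of Proposition~\ref{P:CA-complex} functorially yields a sheaf-theoretic computation of $R\nu_{*}$ on $X_{\Et}$ compatible with étale localization, so that the local isomorphisms glue to the claimed global isomorphism on $X'_{\Et}$.
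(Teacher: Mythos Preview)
Your approach is essentially the same as the paper's: reduce to the affine setting satisfying Assumption~\ref{A:local-lift} by \'etale localization, identify both sides with \v{C}ech--Alexander complexes via Proposition~\ref{P:CA-complex}, and check term-wise that the formation of $\CA(\widetilde X(\bullet),\calF)$ commutes with the completed base change $A\to A'$. The paper's own proof is terser (it simply asserts that the \v{C}ech--Alexander complex commutes with base change), so your added detail about why each term behaves well is welcome.

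One point deserves correction: your sentence ``The finite tor-dimension of $A \to A'$ then ensures that $\otimes^L_A A'$ applied to such a module is concentrated in degree zero'' misidentifies the role of the hypothesis. Finite tor-dimension of $A'$ over $A$ does \emph{not} by itself force $N\otimes^L_A A'$ to sit in degree~$0$ for a $(p,I)$-completely flat $N$. What actually makes the term-wise identification work is the $(p,I)$-complete flatness of $N=\calF(\widetilde X)\widehat\otimes_{\widetilde R}\widetilde R(m)$: by Proposition~\ref{P:completely-flat}(2) one has $N\otimes^L_A A/J^k\simeq N/J^kN$ flat over $A/J^k$ for all $k$, whence $N\otimes^L_A A'/J'^k\simeq (N/J^kN)\otimes_{A/J^k}A'/J'^k$ is already in degree~$0$; taking $R\lim_k$ (with surjective transition maps) gives $(N\otimes^L_A A')^{\wedge}\simeq N\widehat\otimes_A A'$ concentrated in degree~$0$. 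The finite tor-dimension hypothesis is rather a safeguard ensuring that the derived tensor product and derived completion interact well with the \emph{unbounded-above} \v{C}ech--Alexander complex as a whole; indeed, the paper notes (Remark after Corollary~\ref{C:weak base change}) that this hypothesis can eventually be dropped once one has the bounded de~Rham--Higgs model of Theorem~\ref{T:cohomology-Higgs}.
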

\begin{proof}
	We construct first a canonical  morphism $(f_{\Et}^{-1}R\nu_{*}(\calF)\otimes^L_AA')^{\wedge}\to R\nu'_{*}( \calF') $. By adjunction, it suffices to product a morphism $R\nu_*$ of sheaves on $X_{\Et}$
	
	The problem is clearly local for the \'etale topology of $X$. Up to \'etale localization, we may impose thus Assumption~\ref{A:local-lift}. 
	In this case, the statement  follows immediately from Prop.~\ref{P:CA-complex} and the fact that the formation of \v{C}ech--Alexander complex \eqref{E:CA-complex} commutes with the base change  $A\to A'$.
\end{proof}

\begin{remark}
	After establishing Theorem~\ref{T:cohomology-Higgs}, we will see that Corollary~\ref{C:weak base change} holds without the assumption that   $A\to A'$ is of finite tor-dimension. 
\end{remark}

\fi

We  prove Theorem~\ref{T:cohomology-Higgs} by following the strategy of \cite{BD}.
For an integer $m\geqslant 0$,   we have an    isomorphism  \eqref{E:isom-psi}   
	$$R(m)=\widetilde R(m)/I\widetilde R(m)\cong R^{PD}(m)=R\{\frac{\xi_{i,j}}{I}: 1\leqslant i\leqslant n, 1\leqslant j\leqslant m\}^{PD, \wedge},$$
	 where   $R^{PD}(m)$ is a certain twisted divided power polynomial ring over $R$ in $mn$-variables. The kernel  $\calK(m)$ of the canonical surjection  $R(m)\to R$  has a divided power structure. 
	Let  $\Omega^1_{R(m)/R}$ denote the module of continuous differential $1$-forms relative to  $R$ compatible with the divided power structure, i.e. $\rd x^{[r]}=\rd x^{[r-1]}$ 
	for all integer $r\geqslant 1$  and $x\in \calK(m)$.   If $I=(d)$ is principal, this  is a locally free $R(m)$-module with basis $\{\frac{\rd\xi_{k,l}}{d}:1\leqslant k\leqslant n, 1\leqslant l\leqslant m\}$. Here, $\frac{\rd \xi_{k,l}}{d}$ should be rather understood as the differential of the variable $\frac{\xi_{k,l}}{d}\in R(m)$, because the natural image of $\xi_{k,l}=T_{k,l}-T_{k,l-1}\in \widetilde R(m)$ in $R(m)$ is zero. 
	For an integer $j\geqslant 1$, we put $\Omega^j_{R(m)/R}=\wedge^j \Omega^1_{R(m)/R}$, where the wedge product is over $R(m)$. 
	
	We put 
	\[\Omega^{1}_{R(m)}\colon =\Omega^1_{R}\{-1\}\otimes_RR(m)\bigoplus \Omega^1_{R(m)/R}\]
	which is noncanonically isomorphic to the module of  continuous differential $1$-forms of $R(m)$ with respect to $A/I$ compatible with the divided power structure. For each map of simplices $f:[m]\to [m']$, we will construct  a map 
	\[f_*\colon \Omega^1_{R(m)}\to \Omega^1_{R(m')} 
	\]
	such that  $(g\circ f)_*=g_*\circ f_*$ for any two composable maps of simplices. We assume first that $I=(d)$ is principal.  Then $\Omega^1_{R(m)}$ is a free $R(m)$-module with a basis given by 
	 $$\{\frac{\rd T_k}{d}=\rd T_k\otimes d^{-1}: 1\leqslant k\leqslant n\}\cup \{\frac{\rd \xi_{k, l}}{d}: 1\leqslant k\leqslant n, 1\leqslant l\leqslant m\}$$
	By the discussion after the proof of Proposition~\ref{P:PD-delta},   $\xi_{k,l}$ should be understood as the image of $T_{k,l}-T_{k,l-1}\in \widetilde R^{\otimes(n+1)}$ and $T_k$ is identified the image of $T_{k,0}$.
If we sets
	\[
	\frac{\rd T_{k,0}}{d}:=\frac{\rd T_{k}}{d}, \quad \frac{\rd T_{k,l}}{d}:=\frac{\rd T_k}{d}+\sum_{s=1}^{l}\frac{\rd \xi_{k,s}}{d} \quad \quad \text{for }  1\leqslant l \leqslant m,
	\] 
	then $\{\frac{\rd T_{k,l}}{d}: 1\leqslant k \leqslant n, 0\leqslant l\leqslant m\}$ 
	gives  another basis of $\Omega^1_{R(m)}$ over $R(m)$. 
	For each map of simplices $f:[m]\to [m']$, we define
		\[f_*\colon \Omega^1_{R(m)}\to \Omega^1_{R(m')},\quad \frac{\rd T_{k,l}}{d}\mapsto \frac{\rd T_{k,f(l)}}{d}.
	\]
Such a definition is independent of the choice of the generator $d$ of $I$. If $I$ is not necessarily principal, we take a $(p,I)$-completely faithfully flat $\delta$-$A$-algebra $A'$ such that $I'=IA$ is principal, and make the construction of $f_*$ after base change to $A'$. An usual descent argument with  Proposition~\ref{P:flat-descent} allows us to descend the morphism $f_*$ to  $A$. It is also direct to see that $(g\circ f)_*=g_*\circ f_*$ for two composable maps of simplices.  
	This defines a cosimplicial  module $\Omega^1_{R(\bullet)}$ over the cosimplicial ring $R(\bullet)$. Taking wedges, we get a cosimplicial $R(\bullet)$-module $\Omega^j_{R(\bullet)}$ for each integer $j\geqslant 1$.
	
	\begin{lemma}\label{L:trivial-simplicial}
		For each integer $j\geqslant1$, the cosimplicial object $\Omega^j_{R(\bullet)}$ is homotopic to zero.
	\end{lemma}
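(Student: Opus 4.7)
The plan is to exhibit an explicit contracting chain homotopy on the cochain complex associated to $\Omega^j_{R(\bullet)}$. Via faithfully flat descent using Proposition~\ref{P:flat-descent}, and since the formation of $\Omega^j_{R(\bullet)}$ commutes with base change in $A$, I reduce to the case $I = (d)$ principal. In this setting $\Omega^j_{R(m)}$ is a free $R(m)$-module on the ordered $j$-subsets of $\{\rd T_{k,l}/d : 1 \le k \le n,\ 0 \le l \le m\}$, and the cosimplicial map $f_*$ induced by $f: [m]\to[m']$ shifts basis indices $l_i \mapsto f(l_i)$ (collapsing to zero if two images coincide), acting on coefficients via the cosimplicial ring structure $R(m)\to R(m')$.

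This yields a canonical identification $\Omega^j_{R(\bullet)} \cong R(\bullet)\otimes_{\ZZ} \wedge^j_{\ZZ} V^\bullet$ of cosimplicial $A$-modules with diagonal cosimplicial structure, where $V^\bullet$ is the free cosimplicial abelian group on the tautological cosimplicial set $\{1,\dots,n\}\times [\bullet]$, so $f_*(v_{k,l}) = v_{k,f(l)}$. The combinatorial heart of the argument is that $V^\bullet$ carries a canonical extra codegeneracy $s^{-1}_m : V^m \to V^{m-1}$, namely the shift $v_{k,l} \mapsto v_{k,l-1}$ for $l \ge 1$ and $v_{k,0} \mapsto 0$. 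A direct check of the face maps $\delta^m_i$ verifies the extra-degeneracy identities $s^{-1}_m \circ d_0^m = \id$ and $s^{-1}_m \circ d^m_i = d^{m-1}_{i-1}\circ s^{-1}_{m-1}$ for $i \ge 1$, from which the standard computation yields $d^{m-1}h^m + h^{m+1}d^m = \id_{V^m}$ with $h^m := s^{-1}_m$. Morally, this reflects the deformation retraction of the cosimplicial set $[\bullet]$ onto its initial vertex.

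To transfer the null-homotopy to $\Omega^j_{R(\bullet)}$, one extends the shift to wedges and to $R(\bullet)$-coefficients via
\[h^m\bigl(r\cdot v_{k_1,l_1}\wedge\cdots\wedge v_{k_j,l_j}\bigr) = s^m_0(r)\cdot v_{k_1,l_1-1}\wedge\cdots\wedge v_{k_j,l_j-1}\]
when all $l_i \ge 1$, and zero otherwise, where $s^m_0 : R(m)\to R(m-1)$ is the codegeneracy of the cosimplicial ring $R(\bullet)$. The verification of $dh+hd=\id_{\Omega^j_{R(m)}}$ then reduces to a case analysis on which indices $l_i$ vanish, using the cosimplicial identities of $R(\bullet)$; alternatively, one applies the Alexander--Whitney/Eilenberg--Zilber theorem to transfer the tensor null-homotopy $h\otimes\id$ on $C^\bullet(R(\bullet))\otimes_{\ZZ} C^\bullet(\wedge^j V^\bullet)$ to $C^\bullet(\Omega^j_{R(\bullet)})$, invoking that tensoring a null-homotopic complex with any complex remains null-homotopic via the explicit formula. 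The main obstacle will be the sign bookkeeping for the exterior powers $\wedge^j V^\bullet$ with $j\ge 2$; this is essentially a Koszul-rule verification that reduces via functoriality to the $j=1$ case already established by the extra-degeneracy computation.
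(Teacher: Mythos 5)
Your structural reduction is on the right track and does capture the combinatorial heart of the lemma: after reducing to $I=(d)$ principal, $\Omega^j_{R(\bullet)}\cong R(\bullet)\otimes_{\ZZ}\wedge^j_{\ZZ}V^\bullet$ where $V^\bullet$ is the free cosimplicial abelian group on $\{1,\dots,n\}\times[\bullet]$, and the shift $v_{k,l}\mapsto v_{k,l-1}$ really is an extra codegeneracy on $V^\bullet$ (and, since the extra-codegeneracy identities are preserved by any functor applied termwise, also on $\wedge^j_{\ZZ}V^\bullet$). Up to here your reasoning is sound.

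However, the explicit transferred homotopy you propose, $h^m(r\cdot\omega)=s^m_0(r)\cdot(\text{shift of }\omega)$, is wrong already for $j=1$. A direct check on a monomial $r\,v_{k,l}$ with $l\geqslant 1$ shows that the standard telescoping fails at the ring level: both $s_0\delta_0=\id$ \emph{and} $s_0\delta_1=\id$, so the $i=0$ and $i=1$ terms of $h\circ d$ cancel each other instead of leaving behind the identity, and one is left with
\[
(dh+hd)(r\,v_{k,l})=\delta_0 s_0(r)\,v_{k,l},
\]
which is the idempotent $\delta_0 s_0$ (killing $\frac{\xi_{\bullet,1}}{d}$), not the identity. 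Equivalently, your claimed identity $h\circ\delta_1=\delta_0\circ h$ fails, precisely because the cosimplicial ring $R(\bullet)$ has no extra codegeneracy. So the case analysis you outline cannot close, and the difficulty with the exterior powers is not merely ``sign bookkeeping''; the $j=1$ base case of your explicit contraction is already false. The ``alternatively'' route you mention, namely passing to the normalized complexes and invoking the cosimplicial Eilenberg--Zilber/Alexander--Whitney homotopy equivalence $C^\bullet(R(\bullet)\otimes\wedge^jV^\bullet)\simeq\mathrm{Tot}\bigl(C^\bullet(R(\bullet))\otimes C^\bullet(\wedge^jV^\bullet)\bigr)$ together with the contraction $\id\otimes(\wedge^j h_V)$ of the right-hand side, is a correct fix, but as written it is presented as a secondary afterthought and is not carried out; the resulting contraction of $\Omega^j_{R(\bullet)}$ is the (sign-corrected) composite $\nabla\circ(\id\otimes\wedge^jh_V)\circ\mathrm{AW}$ plus a correction term coming from the chain homotopy $\nabla\circ\mathrm{AW}\simeq\id$, not the simple semilinear shift you wrote down. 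For comparison, the paper disposes of the lemma by citing \cite{BD}*{Lemma 2.15, Example 2.16} for the $j=1$ case ($\Omega^1_{R(\bullet)}$ is a direct sum of $n$ copies of the standard contractible cosimplicial $R(\bullet)$-module) and then observes that the resulting extra-codegeneracy structure is preserved by applying $\wedge^j$ termwise; you have essentially reconstructed the underlying combinatorics but stumbled on the precise form of the contracting homotopy.
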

	\begin{proof}
		For $j=1$, the  proof is the same as \cite{BD}*{Lemma 2.15}. Indeed, the cosimplicial object $\Omega^1_{R(\bullet)}$ is a direct sum of $n$ copies of cosimplicial objects in  \cite{BD}*{Example 2.16}. Therefore, it is homotopic to zero. The general case follows since being homotopic to zero is stable under the application of  the functor $\wedge^j$ termwise.	
	\end{proof}
	Note that 
	\begin{equation}\label{E:projection-Omega}
		\Omega^j_{R(m)}=\bigoplus_{k=0}^j \Omega^k_R\{-k\}\otimes_R\Omega^{j-k}_{R(m)/R}.
		\end{equation}
	Let $\rd_R: \Omega^j_{R(m)}\to \Omega^{j+1}_{R(m)}$ be the $R$-linear map given by 
	\[\rd_R(\sum_{k=0}^j\omega_k\otimes \eta_{j-k})=\sum_{k=0}^j(-1)^{k}\omega_k\otimes \rd\eta_{j-k}\]
	for $\omega_k\in \Omega^k_{R}\{-k\}$ and $\eta_{j-k}\in \Omega_{R(i)/R}^{j-k}$.
	In other words, $(\Omega^{\bullet}_{R(m)}, \rd_R)$ is the simple complex attached to the tensor product of  
	\[R\xra{0}\Omega^1_{R}\{-1\}\xra{0}\cdots\xra{0} \Omega^{n}_{R}\{-n\}\] with the relative de Rham complex $(\Omega^{\bullet}_{R(m)/R}, \rd)$. One checks easily that $\rd_R: \Omega^j_{R(\bullet)}\to \Omega^{j+1}_{R(\bullet)}$ is a map of cosimplicial objects, i.e. for any map of simplices $f: [m]\to [m']$, $\rd_R$ commutes with $f_*\colon \Omega^j_{R(m)}\to \Omega^{j}_{R(m')} $.
	
	Let $\widetilde X(\bullet)$ be  the simiplicial object  in $(X/A)_{\prism}$ defined in \S\ref{S:cosimplicial object}. For an integer $i\geqslant 0$, let $\calE_{\widetilde X(i)}$ be the evaluation of $\calE$ at $\widetilde X(i)$. As $\calE$ is  an $\overline{\cO}_{\prism}$-crystal, we have an isomorphism
	\begin{equation}\label{E:identify E}c_{\pr_0}(\calE)\colon \calE_{\widetilde X(i)}\xra{\sim}\calE_{\widetilde X}\otimes_R R(i)\end{equation}
	induced by the $0$-th projection $\pr_0: \widetilde X(i)\to \widetilde X$. In the sequels, we will always use $c_{\pr_0}(\calE)$ to identity these two modules. 
	For  integers $i,j\geqslant 0$, we put 
	\[\scrC^{i,j}=\calE_{\widetilde X(i)}\otimes_{ R(i)}\Omega^j_{ R(i)}\cong \calE_{\widetilde X}\otimes_R \Omega^{j}_{R(i)}.\]
	Then for each integer $j\geqslant 0$, $\scrC^{\bullet, j}=\calE_{\widetilde X(\bullet)}\otimes_{R(\bullet)}\Omega^j_{R(\bullet)}$has a natural structure of  cosimplicial $R(\bullet)$-modules. Let  $(\scrC^{\bullet,j}, d_1^{\bullet, j})$ be the chain complex associated to the cosimplicial object $\scrC^{\bullet, j}$, i.e. if $\delta^{i+1}_{k,*}$ is  the unique injective order preserving  map $\delta^{i+1}_{k}: [i]:=\{0,1,\dots, i\}\to [i+1]$ that skips $k$,  we have 
	\begin{equation}\label{E:simplicial-formula}
		d_1^{i,j}=\sum_{k=0}^{i+1} (-1)^k\delta^{i+1}_{k,*}: \scrC^{i,j}\to \scrC^{i+1,j}.\end{equation}
	We also define  
	$d_2^{i,j}\colon \scrC^{i,j}\to \scrC^{i,j+1}$
	by 
	\begin{equation*}\label{E:formula-d2}
	d_2^{i,j}(x\otimes \omega)=\theta(x)\wedge \omega+x\otimes \rd_R\omega
	\end{equation*}
	for  $x\in \calE_{\widetilde X}$ and $\omega\in \Omega_{R(i)}^1$. It is easy to see that, for each $i\geqslant0$,   $(\scrC^{i,\bullet}, d_2^{i,\bullet})$ is the simple complex associated to the tensor product of $\DR(\calE_{\widetilde X(i)}, \theta)$ with the relative de Rham complex $\Omega^{\bullet}_{R(i)/R}$ (see \ref{S:sign-covention} for the convention).
	\begin{lemma}\label{L:double complex}
		For each integer $j\geqslant 0$, $d_{2}^{\bullet,j}\colon \scrC^{\bullet,j}\to \scrC^{\bullet, j+1}$ is a morphism of cosimplicial objects. In particular, $(\scrC^{\bullet, \bullet}, d_1^{\bullet,\bullet}, d_2^{\bullet, \bullet})$ is a na\"ive double complex.
	\end{lemma}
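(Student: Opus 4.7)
The plan is to verify that $d_2^{\bullet, j}$ commutes with every face map (and, analogously, every codegeneracy) of the cosimplicial object $\scrC^{\bullet, j}$. By faithfully flat descent (Proposition~\ref{P:flat-descent}), I may assume $I = (d)$ is generated by a distinguished element. Under the identification $\scrC^{i, j} \cong \calE_{\widetilde X} \otimes_R \Omega^j_{R(i)}$ coming from $c_{\pr^i_0}$, the face map $\delta^{i+1}_{\alpha, *}$ admits an explicit form determined by the behaviour of the $0$-th projection: for $\alpha \ge 1$ the composition $\pr^i_0 \circ \pr^{i+1}_\alpha$ still equals $\pr^{i+1}_0$, and $\delta^{i+1}_{\alpha, *}$ reduces to $\id \otimes (\delta^{i+1}_\alpha)_*$; whereas for $\alpha = 0$ the composition $\pr^i_0 \circ \pr^{i+1}_0$ is the projection onto the $1$-st copy, so the change of identification introduces the stratification, yielding
\[
\delta^{i+1}_{0, *}(x \otimes \omega) \;=\; \epsilon^{(i+1)}(x) \cdot (\delta^{i+1}_0)_*(\omega), \qquad \epsilon^{(i+1)}(x) := \sum_{\mbar \in \NN^n} \theta^{\mbar}(x)\, (\xi_{\bullet, 1}/d)^{[\mbar]}
\]
by formula~\eqref{E:formula-esp}. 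A short calculation shows that $\rd_R$ commutes with each $(\delta^{i+1}_\alpha)_*$: the maps $(\delta^{i+1}_\alpha)_*$ restrict to maps $\Omega^1_{R(i)/R} \to \Omega^1_{R(i+1)/R}$ induced by the $R$-algebra map $\delta^{i+1}_\alpha$, so naturality of the relative de~Rham differential together with the Leibniz rule $\rd_R(f\omega) = \rd f \wedge \omega + f\, \rd_R(\omega)$ for $f \in R(i)$ gives the commutation.

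For $\alpha \ge 1$ the commutation $\delta^{i+1}_{\alpha, *} \circ d_2 = d_2 \circ \delta^{i+1}_{\alpha, *}$ is then immediate, because $\theta(x)$ lies in the pure summand $\calE_{\widetilde X} \otimes_R \Omega^1_R\{-1\}$ which is preserved by $(\delta^{i+1}_\alpha)_*$ for $\alpha \ge 1$. The main case is $\alpha = 0$, where neither of the two terms in $d_2$ commutes with the face map on its own. On the Higgs side, the shift $(\delta^{i+1}_0)_*(\tfrac{\rd T_k}{d}) = \tfrac{\rd T_k}{d} + \tfrac{\rd \xi_{k, 1}}{d}$ produces an excess term involving $\tfrac{\rd \xi_{k, 1}}{d}$. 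On the relative de~Rham side, the divided-power coefficients $(\xi_{\bullet, 1}/d)^{[\mbar]}$ appearing in $\epsilon^{(i+1)}(x)$ have nonzero $\rd_R$, and via the PD-formula $\rd\big((\xi_{\bullet, 1}/d)^{[\mbar]}\big) = \sum_k (\xi_{\bullet, 1}/d)^{[\mbar - e_k]}\, \rd(\xi_{k, 1}/d)$ one produces another excess term.

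The key step is to observe that these two excess terms coincide. Using the commutativity $\theta_k \theta^{\mbar} = \theta^{\mbar + e_k}$ and the reindexing identity
\[
\sum_{\mbar} \theta^{\mbar}(x)\, (\xi_{\bullet, 1}/d)^{[\mbar - e_k]} \;=\; \epsilon^{(i+1)}(\theta_k(x)),
\]
both discrepancies evaluate to $\sum_k \epsilon^{(i+1)}(\theta_k(x)) \cdot \tfrac{\rd \xi_{k, 1}}{d} \wedge (\delta^{i+1}_0)_*(\omega)$, so they cancel in the sum $d_2 = \theta(-) \wedge - \, + \, \rd_R$, establishing the commutation with $\delta^{i+1}_{0, *}$. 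The hard part of the argument is thus this non-formal cancellation in the $\alpha = 0$ case, which relies crucially on the explicit form~\eqref{E:formula-esp} of $\epsilon$ as a divided-power series in the $\theta_k$'s. Compatibility with codegeneracies is similar but strictly simpler: pulling $\epsilon^{(i+1)}$ back along a multiplication $\widetilde R(i+1) \to \widetilde R(i)$ collapses it to the identity by Remark~\ref{R:rigidification}, so no mismatch arises.
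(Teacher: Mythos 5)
Your proposal follows essentially the same strategy as the paper's proof: reduce to $I=(d)$ principal, observe that the stratification enters only through $\delta^{i+1}_{0,*}$ (since $\delta^{i+1}_0$ is the unique face map moving the $0$-th vertex), and then verify the $\alpha=0$ case by comparing the excess term from the shift $\delta^{i+1}_{0,*}(\tfrac{\rd T_k}{d}) = \tfrac{\rd T_k}{d}+\tfrac{\rd\xi_{k,1}}{d}$ against the excess term from the PD-derivative of the divided-power coefficients of $\epsilon$, using $\theta_k\theta^{\mbar}=\theta^{\mbar+e_k}$. This is exactly the computation the paper carries out, with your ``cancellation of discrepancies'' being the paper's observation that the reindexed sums combine to $\sum_{\mbar}\sum_l\theta_l(\theta^{\mbar}(x))\otimes(\xi_{\bullet,1}/d)^{[\mbar]}(\tfrac{\rd T_l}{d}+\tfrac{\rd\xi_{l,1}}{d})\wedge\delta_{0,*}(\omega)$ on both sides.

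One small inaccuracy: your explanation of the codegeneracy case (appealing to Remark~\ref{R:rigidification} to say that the stratification ``collapses to the identity'' under a multiplication) misses the real point. Since every $\sigma_k:[i+1]\to[i]$ sends $0\mapsto 0$, the composite $\pr_0\circ\sigma_{k,*}$ coincides with $\pr_0$ outright, so under the identification $c_{\pr_0}$ no stratification appears at all: $\sigma_{k,*}(y\otimes\eta)=y\otimes\sigma_{k,*}(\eta)$ directly, and commutation with $d_2$ then reduces to $\rd_R$ being a map of cosimplicial objects. Your conclusion (codegeneracies are strictly simpler) is right, but the mechanism is ``no stratification enters'', not ``the stratification collapses''.
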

	\begin{proof}
		For integers  $i,k\geqslant0$  with $ k\leqslant i$, let $\sigma^{i+1}_k:[i+1]\to [i]$ be the surjective order preserving map of simplices with $(\sigma_k^{i+1})^{-1}(k)=\{k,k+1\}$.
		It suffices to prove that $\sigma^{i}_{k,*}d_2^{i+1,j}=d_{2}^{i,j}\sigma^{i}_{k,*}$ and   $d_{2}^{i+1,j}\delta^{i+1}_{k,*}=\delta^{i+1}_{k,*} d_2^{i,j}$. 
		
		For $y\otimes\eta \in \calE_{\widetilde X}\otimes_R\Omega^j_{R(i+1)}=\scrC^{i+1,j}$, we have 
		\[
		\sigma^{i}_{k,*}(y\otimes \eta)=y\otimes \sigma^{i}_{k,*}(\eta).
		\]
		Then the commutation of $d_2$ with $\sigma^{i}_{k,*}$ follows from the fact  that $\rd_R: \Omega^j_{R(\bullet)}\to \Omega^{j+1}_{R(\bullet)}$ is a morphism  of cosimplicial objects.

		It remains to check $d_{2}^{i+1,j}\delta^{i+1}_{k,*}=\delta^{i+1}_{k,*} d_2^{i,j}$.  As usual, it suffices to check this equality after  base change to a $(p,I)$-completely faithfully flat $\delta$-$A$-algebra $A'$ with $IA'$ principal. Therefore, up to changing the notation, we may assume that $I=(d)$ is principal.
		For $x\otimes \omega\in \calE_{\widetilde X}\otimes_R\Omega^j_{R(i)}$, which is identified with $\calE_{\widetilde X(i)}\otimes_{R(i)}\Omega^j_{R(i)}$ via \eqref{E:identify E}, we have 
		\[\delta_{k,*}^{i+1}(x\otimes \omega)=\begin{cases}
			x\otimes \delta^{i+1}_{k,*}(\omega), & \text{if }k\geqslant1;\\
			\pr_{0,1,*}(\epsilon)(x)\otimes \delta^{i+1}_{0,*}(\omega), &\text{if }k=0.
		\end{cases}\]
		Here, $\pr_{0,1,*}: R(1)\to R(i+1)$ is the map induced by the natural inclusion of simplices $[1]\to [i+1]$.
		 By the formula \eqref{E:formula-esp}, we get 
		\[\pr_{0,1,*}(\epsilon)(x)=\sum_{\underline m\in \NN^n}\theta^{\underline m}(x)\otimes \bigg(\frac{\xi_{\bullet,1}}{d}\bigg)^{[\underline m]}
		\]
		Assume first that $k\geqslant1$. Then $d_2^{i+1,j}\delta^{i+1}_{k,*}=\delta^{i+1}_{k,*}d_2^{i,j}$ amounts to the commutation of $\rd_R$ with $\delta_{k,*}^{i+1}$. Consider now the case $k=0$. Then for $x\otimes \omega\in \calE_{\widetilde X}\otimes_R\Omega^j_{R(i)}$, we have 
		\begin{align*}d_2^{i+1,j} (\delta_{0,*}^{i+1}(x\otimes \omega))&=\sum_{\underline m\in \NN^n}d_2^{i+1,j}\bigg(\theta^{\underline m}(x)\otimes\bigg(\frac{\xi_{\bullet,1}}{d}\bigg)^{[\underline m]} \delta^{i+1}_{0,*}(\omega)\bigg)\\
			&=\sum_{\underline m\in \NN^n}\sum_{l=1}^n \theta_l(\theta^{\underline m}(x))\otimes \bigg(\frac{\xi_{\bullet,1}}{d}\bigg)^{[\underline m]}\frac{\rd T_l}{d}\wedge  \delta^{i+1}_{0,*}(\omega)\\
			&+\sum_{\underline m\in \NN^n, |\underline m |\geqslant 1}\sum_{l=1}^n\theta^{\underline m}(x)\otimes \bigg(\frac{\xi_{\bullet,1}}{d}\bigg)^{[\underline m-e_l]} \frac{\rd\xi_{l,1}}{d}\wedge \delta_{0,*}^{i+1}(\omega)+ \pr_{0,1,*}(\epsilon)(x)\otimes \rd_R(\delta_{0,*}^{i+1}\omega)\\
			&=\sum_{\underline m\in \NN^n}\sum_{l=1}^n \theta_l(\theta^{\underline m}(x))\otimes \bigg(\frac{\xi_{\bullet,1}}{d}\bigg)^{[\underline m]}\bigg(\frac{\rd T_l}{d}+\frac{\rd \xi_{l,1}}{d}\bigg)\wedge  \delta^{i+1}_{0,*}(\omega)+ \pr_{0,1,*}(\epsilon)(x)\otimes \rd_R(\delta_{0,*}^{i+1}\omega).
		\end{align*}
		Here, $e_l\in \NN^n$ is the element with $1$ at $l$-th component and $0$ at other components.
		On the other hand, we have 
		\begin{align*}
			\delta^{i+1}_{0,*}(d_2^{i,j}(x\otimes \omega))&=\delta^{i+1}_{0,*}\bigg(\sum_{l=1}^n\theta_l(x)\otimes\frac{\rd T_l}{d}\wedge \omega+ x\otimes \rd_R(\omega)\bigg)\\
			&=\sum_{\underline m\in \NN^n}\sum_{l=1}^n\theta^{\underline m}(\theta_l(x))\otimes \delta^{i+1}_{0,*}(\frac{\rd T_l}{d})\wedge \delta^{i+1}_{0,*}(\omega)+ \pr_{0,1,*}(\epsilon)(x)\otimes \delta^{i+1}_{0,*}(\rd_R\omega).
		\end{align*}
		Now $(\delta_{0,*}^{i+1}(x\otimes \omega))=\delta^{i+1}_{0,*}(d_2^{i,j}(x\otimes \omega))$ follows from the fact that $\delta^{i+1}_{0,*}(\frac{\rd T_{l}}{d})=\frac{\rd T_{l}}{d}+\frac{\rd \xi_{l,1}}{d}$ and that $\delta^{i+1}_{0,*}$ commutes with $\rd_R$.
	\end{proof}

	\begin{lemma}\label{L:poincare-lemma}
		For   an integer $i\geqslant 0$,  any morphism of simplices  $\delta: [0]\to [i]$ induces a  quasi-isomorphism of  complexes 
		$$\DR(\calE_{\widetilde X}, \theta)=(\scrC^{0, \bullet}, d_2^{0,\bullet})\xra{\sim}(\scrC^{i,\bullet}, d_2^{i,\bullet}).$$
		
	\end{lemma}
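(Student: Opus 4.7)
The plan is to reduce to the classical Poincar\'e lemma for the de Rham complex of a (completed) divided-power polynomial ring, and then to exploit the fact that every vertex $\delta\colon[0]\to[i]$ is a section of the diagonal augmentation $R(i)\to R$. First, by a $(p,I)$-completely faithfully flat base change and Proposition~\ref{P:flat-descent}, I may assume $I=(d)$ is principal; Proposition~\ref{P:PD-delta} then identifies $R(i)$ with $R\{\xi_{k,l}/d\}^{PD,\wedge}$, the $p$-adic completion of the free divided-power polynomial ring over $R$ in the $ni$ variables $\xi_{k,l}/d$. Using the decomposition \eqref{E:projection-Omega}, I view $(\scrC^{i,\bullet},d_2^{i,\bullet})$ as the totalisation of the na\"ive double complex with $(k,l)$-term $\calE_{\widetilde X}\otimes_R\Omega^k_R\{-k\}\otimes_R\Omega^l_{R(i)/R}$, whose horizontal differential is the relative PD de Rham differential on the third factor and whose vertical differential is induced by the Higgs field $\theta$.

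Next I show that the augmentation $\mathrm{aug}\colon R(i)\to R$ sending each $\xi_{k,l}/d$ to $0$ extends to a morphism of complexes $\mathrm{aug}_{*}\colon\scrC^{i,\bullet}\to\scrC^{0,\bullet}=\DR(\calE_{\widetilde X},\theta)$ which is a quasi-isomorphism. The classical PD Poincar\'e lemma in one variable $y$ provides an $R$-linear contracting homotopy $y^{[m]}\,dy\mapsto y^{[m+1]}$ for the two-term complex $R\{y\}^{PD,\wedge}\xrightarrow{d}R\{y\}^{PD,\wedge}\cdot dy$; this formula is continuous for the $p$-adic topology and extends by tensor product to the multivariable completed complex. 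Since $\calE_{\widetilde X}\otimes_R\Omega^k_R\{-k\}$ is $p$-completely flat over $R$ for every $k$, I may tensor termwise, preserve the contracting homotopy, and pass to totalisations to conclude that $\mathrm{aug}_{*}$ is a quasi-isomorphism.

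Finally, for any vertex $\delta\colon[0]\to[i]$ with $\delta(0)=k$, the associated ring homomorphism $\widetilde R\to\widetilde R(i)$ is $b\mapsto 1\otimes\cdots\otimes b\otimes\cdots\otimes 1$ (with $b$ in position $k$), and its composition with the diagonal multiplication $\widetilde R^{\otimes(i+1)}\to\widetilde R$ is the identity. Reducing modulo $I$ and applying the cocycle condition on the transition maps of $\calE$, I obtain $\mathrm{aug}_{*}\circ\delta_{*}=\mathrm{id}_{\scrC^{0,\bullet}}$ as maps of complexes; thus $\delta_{*}$ is a section of a quasi-isomorphism and is therefore itself a quasi-isomorphism.

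The only genuinely delicate step is verifying that the classical contracting homotopy descends to the $p$-adic completion and remains a chain homotopy after tensoring with $\calE_{\widetilde X}\otimes_R\Omega^k_R\{-k\}$; once this is secured, the remaining verifications are bookkeeping about the decomposition \eqref{E:projection-Omega} and the total-complex sign conventions of \S\ref{S:sign-covention}.
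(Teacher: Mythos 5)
Your proof is correct and follows essentially the same strategy as the paper: reduce to $I=(d)$ principal, use the PD Poincar\'e lemma to show that the augmentation/projection $\scrC^{i,\bullet}\to\scrC^{0,\bullet}=\DR(\calE_{\widetilde X},\theta)$ arising from the decomposition \eqref{E:projection-Omega} is a quasi-isomorphism, and conclude since $\delta_*$ is a section of it. The only difference is one of exposition: you spell out the contracting homotopy $y^{[m]}\,dy\mapsto y^{[m+1]}$ and the verification that it survives $p$-adic completion and termwise tensoring, whereas the paper treats the quasi-isomorphism $R\to\Omega^{\bullet}_{R(i)/R}$ as standard once $R(i)$ is identified with a completed free divided-power polynomial algebra; your phrase ``composition with the diagonal multiplication $\widetilde R^{\otimes(i+1)}\to\widetilde R$'' should more precisely invoke the augmentation $\widetilde R(i)\to\widetilde R$ induced by it via the universal property of the prismatic envelope, but this is a cosmetic point and the substance is right.
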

	\begin{proof}
			Note first that   $(\scrC^{i,\bullet}, d_2^{i,\bullet})$ is the simple complex attached to the tensor product of $\DR(\calE_{\tilde X}, \theta)$ with the relative de Rham complex $(\Omega^\bullet_{R(i)/R},d)$; in particular, one has $\DR(\calE_{\widetilde X}, \theta)=(\scrC^{0, \bullet}, d_2^{0,\bullet})$.
			 By  Lemma~\ref{L:double complex}, $d_2^{\bullet, j}$ is a morphism of cosimplicial $R(\bullet)$-modules for each fixed  $j\geqslant 0$. If we let $j$ vary,  any morphism of simplices $\delta:[0]\to [i]$ induces a map of complexes
		 of complexes   
		 $\delta_{*}: (\scrC^{0, \bullet}, d_2^{0,\bullet})\to (\scrC^{i, \bullet}, d_2^{i,\bullet})$. 
		 We have to see that this is a quasi-isomorphism. 

		First, we claim that the natural map of complexes $R\xra{\sim} \Omega^{\bullet}_{R(i)/R}$ is quasi-isomorphism. Indeed, this can be checked after a $(p,I)$-completely faithfully flat base change in $(A,I)$. Up to making such a base change, we may assume that $I=(d)$ is principal. Then $R(i)$ is a free divided power polynomial ring over $R$ in $ni$-variables, from which the claim follows. As a consequence,  the natural  projections  $\Omega^j_{R(i)}\to\Omega^j_{R}\{-j\}$  in the decomposition \eqref{E:projection-Omega}  for all $j$  give a  quasi-isomorphism
		\[
		(\scrC^{i,\bullet}, d_2^{i,\bullet})\xra{\sim} 	\DR(\calE_{\widetilde X}, \theta).
		\]
	Since the composition of $\delta_{*}$ with the above contraction map is the identity on $\DR(\calE_{\widetilde X}, \theta)$, the Lemma follows immediately.

	\end{proof}

	\begin{proof}[End of the proof of Theorem~\ref{T:cohomology-Higgs}]
		Note that $\scrC^{\bullet, 0}$ is by definition the \v{C}ech--Alexander complex $\CA(\widetilde X(\bullet), \calE)$ (cf. \eqref{E:CA-complex}), and $\DR(\calE_{\widetilde X}, \theta)=(\scrC^{0, \bullet}, d_2^{0,\bullet})$.   By Proposition.~\ref{P:CA-complex},  $R\nu_{X/A,*}(\calE)$ is computed by $\scrC^{\bullet, 0}$.  To finish the proof, it suffices to show that both $\scrC^{\bullet, 0}$ and $\scrC^{0,\bullet}$ are both quasi-isomorphic to $\underline{s}(\scrC^{\bullet, \bullet})$, the simple complex associated to $\scrC^{\bullet,\bullet }$. Recall that the map of complexes 
		$d_{1}^{i,\bullet}\colon \scrC^{i,\bullet}\to \scrC^{i+1,\bullet}$
		is given  by $\sum_{k=0}^{i+1}(-1)^k\delta^{i+1}_{k,*}$. By Lemma~\ref{L:poincare-lemma}, $\scrC^{i,\bullet}$ and $\scrC^{i+1,\bullet}$ are both  quasi-isomorphic to $\DR(\calE_{\tilde X},\theta)$ and each $\delta^{i+1}_{k,*}$ corresponds to the identity map  of $\DR(\calE_{\tilde X},\theta)$.
		Therefore, via these quasi-isomorphisms, 
		$d_1^{i,\bullet}$ corresponds to  the identity map of $\DR(\calE_{\tilde X}, 
		\theta)$ if $i$ is odd, and  to  the zero map if $i$ is even. It follows that $\underline{s}(\scrC^{\bullet, \bullet})$ is quasi-isomorphic to 
		\[\underline {s}\big(\DR(\calE_{\tilde X}, \theta)\xra{0} \DR(\calE_{\tilde X}, \theta)\xra{\id}\DR(\calE_{\tilde X}, \theta)\xra{0}\cdots \big)\cong \DR(\calE_{\tilde X}, \theta)=\scrC^{0,\bullet}.\]
		
		On the other hand, by Lemma~\ref{L:trivial-simplicial}, the complex $\scrC^{\bullet, j}=\calE_{\widetilde X(\bullet)}\otimes_{R(\bullet)} \Omega^{j}_{R(\bullet)}$ is homotopic to zero for all integers $j\geqslant 1$ since being homotopic to zero is stable under  tensoring with another cosimplicial module. Hence, the natural map
		\[\scrC^{\bullet, 0}\xra{\sim} \underline{s}(\scrC^{\bullet, \bullet}) \]
		is a quasi-isomorphism. This finishes the proof of Theorem~\ref{T:cohomology-Higgs}.

	\end{proof}
	
	\begin{remark}
		It is clear that the trivial $\overline{\cO}_{\prism}$-crystal $\calE=\overline{\cO}_{\prism}$ corresponds to the trivial Higgs module $(\cO_X, 0)$. Thus 
		Theorem~\ref{T:cohomology-Higgs} implies that 
		\[
		R\nu_{X/A,*}(\overline \cO_{\prism})=\bigoplus_{i=0}^{n}\Omega^1_R\{-i\}.
		\] 
		From this, it is easy to deduce Bhatt--Scholze's Hodge--Tate comparison theorem \cite[Theorem~4.11]{BS} for general smooth $p$-adic formal scheme over $\Spf(A/I)$. 
	\end{remark}
	
	\subsection{}\label{S:proof of main theorem} \textbf{End of the proof of  ~\ref{T:Obar-crystal}:}
 First, we note that the perfectness of  $R\nu_{X/A,*}(\calE)$ can be checked after base change to a $(p,I)$-completely faithfully flat  $\delta$-$A$-algebra $A'$ with $IA'$ principal. Indeed,  as $R\nu_{X/A,*}(\calE)$ is derived $p$-complete,  by \cite[\href{https://stacks.math.columbia.edu/tag/09AW}{Tag 09AW}]{stacks-project}, it suffices to show that $R\nu_{X/A, *}(\calE)\otimes_{A/I}^LA/(I, p^n)$ is a perfect $\cO_{X}\otimes_A{A/(I,p^n)}$-module for each integer $n\geqslant 1$. Then according to \cite[\href{https://stacks.math.columbia.edu/tag/068T}{Tag 068T}]{stacks-project}, the perfectness of $R\nu_{X/A, *}(\calE)\otimes_{A/I}^LA/(I,p^n)$ can be checked after the faithfully flat base change $A/(I, p^n)\to A'/(I,p^n)$. Therefore, we are reducing to showing  that $(R\nu_{X/A,*}(\calE)\otimes_A^LA')^{\wedge}$ is a perfect $\cO_{X'}$-module, where $X'=X\times_{\Spf(A/I)}\Spf(A'/IA')$. 
	By Corollary~\ref{C:weak base change}, one has a canonical isomorphism  $(R\nu_{X/A,*}(\calE)\otimes_A^LA')^{\wedge}\cong R\nu_{X'/A',*}(\calE')$, with $\calE'$ the pull-back of $\calE$ to $(X'/A')_{\prism}$. Up to performing such a base change, we may assume that $I$ is principal.

	Moreover, 
 statements (1) and (2) are both local for the \'etale topology of $X$. Up to \'etale localization, we may assume that $X=\Spf(R)$ is affine and satisfies the assumptions in Situation~\ref{A:local-coordinate}. Then statement \ref{T:cohomology-Higgs}(1) follows immediately from Theorem~\ref{T:cohomology-Higgs}.

For (2), let $\widetilde R$ be a lift of $R$ as in Situation~\ref{A:local-coordinate}. For a morphism of  bounded prisms $(A,I)\to (A',I')$,  put $\widetilde R'=\widetilde R\widehat{\otimes}_AA'$,  $R'=\widetilde R'/I'\widetilde R'$ and $X'=\Spf(R')$. If $\calE$ corresponds to the Higgs module  $(\calE_{\tilde X}, \theta)$, then $\bar f_{\prism}^*\calE$ corresponds to $(\calE_{\tilde X}\otimes_{R}R', \theta\otimes 1)$.  By Theorem~\ref{T:cohomology-Higgs}, the canonical base change map 
	$f^{-1}R\nu_{X/A,*}(\calE)\otimes^L_{f^{-1}\cO_X}\cO_{X'}\to R\nu_{X'/A',*}(\bar f^*_{\prism}\calE)$ is identified in the derived category of $R'$-modules with the base change map
	\[
	\DR(\calE_{\tilde X},\theta)\otimes_RR'\xra{\sim} \DR(\calE_{\tilde X}\otimes_RR', \theta')
	\]
	which is clearly an isomorphism of complexes.

	\section{Poincar\'e Duality}
	\label{S:global application}
	We fix a bounded prism $(A,I)$, and a smooth $p$-adic formal scheme of relative dimension $n$ over $\Spf(A/I)$.

	\subsection{} Let $\CR((X/A)_{\prism},\overline{\cO}_{\prism})^{fr}$ be the category of $\overline{\cO}_{\prism}$-crystals locally free of finite rank.  We have the  natural notions of  tensor  product and internal hom in $\CR((X/A)_{\prism},\overline{\cO}_{\prism})^{fr}$:
	If $\calE_1,\calE_2$ are two objects of $\CR((X/A)_{\prism},\overline{\cO}_{\prism})^{fr}$, their tensor product   $\calE_1\otimes\calE_2$ is the $\overline{\cO}_{\prism}$-crystal such that  $$(\calE_1\otimes\calE_2)(U)=\calE_1(U)\otimes_{B}\calE_{2}(U)$$
	for every object $U=(\Spf(B)\leftarrow \Spf(B/IB)\to X)$ of $(X/A)_{\prism}$, and 
	$\underline{\hom}(\calE_1,\calE_2)$ is the $\overline{\cO}_{\prism}$-crystal  such that 
	\[
	\underline{\hom}(\calE_1,\calE_2)(U)=\Hom_{B}(\calE_1(U),\calE_2(U)).
	\]
	Assume  that  $X=\Spf(R)$ satisfies the assumptions in Situation~\ref{A:local-coordinate}. Let $(M_i,\theta_i)$ with $i=1,2$ be the object of $\Higgs^{\wedge}(R)$ corresponding to $\calE_i$ via the equivalence of categories in Theorem.~\ref{T:crystals-Higgs}. Then $\calE_1\otimes \calE_2$ corresponds to the Higgs module $(M_1\otimes_RM_2, \theta_1\otimes 1+1\otimes\theta_2)$, and $\underline{\hom}(\calE_1,\calE_2)$ corresponds to $(\Hom_R(M_1,M_2), \theta)$ such that 
	\[\theta(f)(x)=\theta_2(f(x))-(f\otimes1)(\theta_1(x))\in M_2\otimes_R \Omega^1_R\{-1\}\]
	for all $ x\in M_1.$

	\subsection{Duality pairing} Let $\calE$  be an object of $\CR((X/A)_{\prism},\overline{\cO}_{\prism})^{fr}$.  We put $\calE^\vee:=\underline{\hom}(\calE,\underline\cO_{\prism})$ and
	\[\calE\{i\}:=\calE\otimes_{A/I}(I/I^2)^{\otimes i}\]
	for all integers $i$. Then the cup product induces  a pairing 
	\begin{align}\label{E:crystal-pairing}
		R\nu_{X/A,*}(\calE^\vee\{n\})	\otimes^L_{\cO_X}R\nu_{X/A,*}(\calE) &\to R\nu_{X/A,*}(\calE\otimes\calE^\vee\{n\})\\
		&\to R\nu_{X/A,*}(\overline{\cO}_{\prism}\{n\})\ra \Omega^n_{X}[-n],\nonumber
	\end{align}
	where the last map is induced by the Hodge--Tate comparison isomorphism \cite[Theorem~4.11]{BS}
	\[R^n\nu_{X/A,*}(\overline{\cO}_{\prism})\cong \Omega^n_X\{-n\}.\]
	If moreover $X$ is proper over $\Spf(A/I)$, then one has similarly a pairing of perfect complexes of $A/I$-modules:
	\begin{equation}\label{E:cohomology-pairing}
		R\Gamma((X/A)_{\prism}, \calE^\vee\{n\})\otimes_{A/I}^L R\Gamma((X/A)_{\prism}, \calE)\to R\Gamma((X/A)_{\prism} \overline{\cO}_{\prism}\{n\})\to A/I[-2n],
	\end{equation}
	where the last map is induced by the Grothendieck--Serre trace map  
	\begin{equation}\label{E:HT-isomorphism}
		H^{2n}((X/A)_{\prism}, \overline{\cO}_{\prism}\{n\})\cong H^n(X_{\et}, R^n\nu_{X/A,*}(\overline{\cO}_{\prism}\{n\}))=H^n(X_{\et}, \Omega^n_X)\to  A/I.
	\end{equation}

	\begin{theorem}\label{T:poincare-pairing}
		Under the notation above, the following statements hold:
		\begin{enumerate}
			\item The pairing \eqref{E:crystal-pairing} induces an isomorphism of perfect complexes of $\cO_X$-modules: 
			\[R\nu_{X/A,*}(\calE^\vee\{n\})\xra{\sim}\RHom_{\cO_X}(R\nu_{X/A,*}(\calE),\Omega^n_X)[-n].\]
			
			\item If moreover $X$ is proper over $\Spf(A/I)$,  the pairing \eqref{E:cohomology-pairing} 
			induces an isomorphism of perfect complexes of $(A/I)$-modules
			\[R\Gamma((X/A)_{\prism}, \calE^\vee\{n\})\cong R\Hom_{A/I}(R\Gamma((X/A)_{\prism}, \calE),A/I)[-2n]\]
		\end{enumerate} 
	\end{theorem}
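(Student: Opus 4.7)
My plan is to reduce Theorem~\ref{T:poincare-pairing}(1) to a statement about Higgs de Rham complexes, and then deduce (2) from (1) via classical Grothendieck--Serre duality.

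For (1), the statement is étale-local on $X$, so we may assume $X = \Spf(R)$ satisfies Situation~\ref{A:local-coordinate}. Moreover, by the same flat descent argument used at the beginning of \S\ref{S:proof of main theorem}, we may further assume that $I = (d)$ is principal. If $(M,\theta) \in \Higgs^{\wedge}(R)$ corresponds to $\calE$ under Theorem~\ref{T:crystals-Higgs}, then $M$ is locally free of finite rank over $R$, and by tracing through the equivalences one checks that $\calE^{\vee}\{n\}$ corresponds to the dual Higgs module $(M^{\vee}\{n\}, \theta^{\vee})$, where $\theta^{\vee}$ is the Higgs field described in the general formula of \S5.1. Moreover, via Theorem~\ref{T:cohomology-Higgs} the pairing \eqref{E:crystal-pairing} is identified with the obvious pairing
\[
\DR(M^{\vee}\{n\},\theta^{\vee}) \otimes^{L}_{R} \DR(M,\theta) \longrightarrow \Omega^n_R[-n]
\]
given on components by the evaluation $M^{\vee}\otimes M \to R$ tensored with the wedge product $\Omega^{n-i}_R\{-(n-i)\} \otimes \Omega^i_R\{-i\} \to \Omega^n_R\{-n\}$ (followed by the identification with $\Omega^n_R$ via the twist). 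Thus (1) reduces to proving this pairing of Higgs de Rham complexes is a perfect duality.

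In local coordinates, writing $\theta = \sum_{i=1}^n \theta_i\,\frac{\rd T_i}{d}$ for pairwise commuting $\theta_i \in \End_R(M)$, the complex $\DR(M,\theta)$ is the Koszul complex $\Kos(M; \theta_1,\dots,\theta_n)$ attached to these $n$ commuting operators on the locally free $R$-module $M$ (tensored with the exterior algebra on $\bigoplus_i R\,\frac{\rd T_i}{d}$). The claimed duality is then the standard self-duality of Koszul complexes of commuting endomorphisms of a locally free module of finite rank, combined with the canonical isomorphism $\wedge^{n-i}(\Omega^1_R\{-1\}) \cong \Hom_R(\wedge^i(\Omega^1_R\{-1\}), \Omega^n_R\{-n\})$ coming from the fact that $\Omega^1_R$ is locally free of rank $n$. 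This is a purely formal computation which can be checked termwise.

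For (2), we combine (1) with Grothendieck--Serre duality for the proper smooth morphism $X \to \Spf(A/I)$ of relative dimension $n$, whose dualizing complex is $\Omega^n_X[n]$. Applied to the perfect complex $R\nu_{X/A,*}(\calE)$ (perfect by Theorem~\ref{T:Obar-crystal}(1)), this yields
\[
R\Hom_{A/I}\bigl(R\Gamma(X_{\et}, R\nu_{X/A,*}(\calE)), A/I\bigr) \cong R\Gamma\bigl(X_{\et}, R\Hom_{\cO_X}(R\nu_{X/A,*}(\calE), \Omega^n_X)\bigr)[n].
\]
Substituting the isomorphism of (1) into the right-hand side gives $R\Gamma((X/A)_{\prism}, \calE^{\vee}\{n\})[2n]$, as desired. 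Finally, one must check that the resulting composite isomorphism is induced by the pairing \eqref{E:cohomology-pairing}: this reduces to the compatibility between the Grothendieck--Serre trace on $H^n(X_{\et}, \Omega^n_X)$ and the trace map \eqref{E:HT-isomorphism} arising from the Hodge--Tate comparison isomorphism.

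I expect the main obstacle to be the careful bookkeeping of signs and twists needed to identify the pairing \eqref{E:crystal-pairing} with the concrete Koszul pairing on the Higgs side, and to verify the compatibility of the global pairing \eqref{E:cohomology-pairing} with the composition of (1) and Grothendieck--Serre duality; both are routine but require diligent use of the sign conventions of \S\ref{S:sign-covention} and of the cup-product formalism.
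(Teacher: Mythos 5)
Your proposal is correct and follows essentially the same route as the paper: both reduce (1) to the Higgs/de Rham description in the local situation of Situation~\ref{A:local-coordinate} via Theorems~\ref{T:crystals-Higgs} and \ref{T:cohomology-Higgs}, then deduce (2) from (1) by classical Grothendieck--Serre duality. The only stylistic difference is that you package the verification of the de Rham pairing by naming $\DR(M,\theta)$ as a Koszul complex of commuting endomorphisms and invoking its standard self-duality, whereas the paper writes the pairing out explicitly in a coordinate-free form and observes directly that with the sign convention of \S\ref{S:sign-covention} it gives an isomorphism $\DR(M^\vee\{n\},\theta^\vee)\cong\Hom^\bullet(\DR(M,\theta),\Omega^n_R)[-n]$; these are the same computation. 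One small remark on your last paragraph: the trace map \eqref{E:HT-isomorphism} is by construction the composite of the Hodge--Tate identification with the classical Grothendieck--Serre trace, so the compatibility you flag at the end of (2) is built into the definitions rather than an extra verification.
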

	
	\begin{proof}
		It is clear that statement (2) is an immediate consequence of (1) and the classical Grothendieck--Serre duality. Since statement (1) is local for the \'etale topology of $X$, up to \'etale localization we may assume thus that $X=\Spf(R)$ satisfies the assumptions in Situation~\ref{A:local-coordinate}. Let $(M,\theta)$ be the object of $\Higgs^{\wedge}(R)$ corresponding to $\calE$ by Theorem~\ref{T:crystals-Higgs}, and $(M^\vee\{n\}, \theta^\vee)$ be the object corresponding to $\calE^\vee\{n\}$. Then we have $M^\vee\{n\}=\Hom_R(M,R\{n\})$ and $\theta^\vee$ is given by 
		\[\theta^\vee(f)(x)=-(f\otimes 1)(\theta(x))\in \Omega^1_{R}\{n-1\},\]	
		for all $x\in M$ and $f\in M^\vee\{n\}$. By Theorem~\ref{T:cohomology-Higgs}, the pairing \eqref{E:crystal-pairing} is represented by the pairing of  complexes 
		\[\DR(M^\vee\{n\},\theta^\vee)\otimes_{R} \DR(M, \theta)\to \Omega^n_R[-n]\]
		given by 
		\[\langle f\otimes \omega_i,x\otimes\eta_j\rangle =\begin{cases}0 &\text{if $i+j\neq n$},\\
			f(x)\omega_i\wedge \eta_j &\text{if } i+j=n,
		\end{cases}\]
		for  $x\in M$, $f\in M^\vee\{n\}$, $\omega_i\in \Omega^i_R\{-i\}$  and $\eta_j\in \Omega^j_R\{-j\}$. Now, it is straightforward  to verify that, with the sign convention \S\ref{S:sign-covention} ,  such a pairing induces an isomorphism of complexes
		\[\DR(M^\vee\{n\}, \theta^\vee)\cong \Hom(\DR(M,\theta), \Omega^n_R)[-n].\]
		This finishes the proof of (1).
	\end{proof}

	\begin{remark}\label{R:poincare-duality}
		Assume that $X$ is proper and smooth of relative dimension $n$ over $\Spf(A/I)$.
		If one can construct a trace map 
		\[\mathrm{Tr}_X\colon H^{2n}((X/A)_{\prism}, \cO_{\prism}\{n\})\to A\]
		which reduces to the classical trace map  \eqref{E:HT-isomorphism} when modulo $I$ (so that $\mathrm{Tr}_X$ itself is an isomorphism if $X$ is geometrically connected), then Theorem~\ref{T:poincare-pairing} implies a similar duality for $\cO_{\prism}$-crystals.
	\end{remark}

	\begin{bibdiv}
		\begin{biblist}
			\bib{AB}{article}{
				author={J. Ansch\"utz},
				author={ A.-C. Le Bras},
				title={Prismatic Dieudonn\'e Theory}, 
			journal={Forum of Math., Pi},
			volume={11},
			 	year={2023},
			 	doi={10.1017/fmp.2022.22}
			}
			
			\bib{Bourbaki}{book}{
				author={N. Bourbaki},
				title={\'El\'ement de math\'ematique, Alg\`ebre Commutative},
				year={2006},
				publisher={Springer--Verlag},
				address={Berlin, Heidelberg, Newyork},
				doi={10.1007/978-3-540-33976-2}
			}

			\bib{SGA4}{book}{
				editor={M. Artin},
				editor={ P. Deligne}
				editor={ A. Grothendieck}
				editor={ J. L. Verdier},
				title={Séminaire de Géométrie Algébrique du Bois Marie - 1963-64, Th\'eorie des topos et cohomologie \'etale des sch\'emas II},
				series={Lecture notes in Mathematics},
				volume={270},
				date={1972},
				publisher={Springer--Verlag},
				address={Berlin, New York},
				review={\MR{0354653}},
				doi={10.1007/BFb0061319}
			}
			
			\bib{BD}{article}{
				author={B. Bhatt},
				author={de Jong, A. J.},
				title={Crystalline cohomology and de Rham cohomology},
				year={2011},
				eprint={https://arxiv.org/pdf/1110.5001}
			}
			
			\bib{BL}{article}{
				author={B. Bhatt},
				author={J. Lurie},
				title={Absolute prismatic 	cohomology},
				year={2022},
				eprint={https://arxiv.org/abs/2201.06120},
			}

			\bib{BMS1}{article}{
				author={B. Bhatt},
				author={ M. Morrow},
				author={   P. Scholze},
				title={ Integral $p$-adic Hodge theory},
				journal={Publ. Math. IHES},
				volume={128},
				pages={219-397},
				year={2018},
				doi={10.1007/s10240-019-00102-z},
				review={\MR{3905467}}}

			\bib{BMS2}{article}{
				author={B. Bhatt},
				author={ M. Morrow},
				author={   P. Scholze},
				title={ Topological Hochschild homology and integral $p$-adic Hodge theory},
				journal={Publ. Math. IHES},
				volume={129},
				pages={199-310},
				year={2019},
				doi={10.1007/s10240-019-00106-9},
				review={\MR{3949030}}}

			\bib{BS}{article}{
				author={Bhatt, B.},
				author={Scholze, P.},
				title={Prisms and prismatic cohomology},
				journal={Ann. of Math.},
				volume={196},
				year={2022},
				pages={1135-1275},
				doi={10.4007/annals.2022.196.3.59}
			}
			\bib{chatz}{article}{
				author={A. Chatzistamatiou},
				title={$q$-crystals and $q$-connections},
				year={2020},
				eprint={https://arxiv.org/pdf/2010.02504v1}
			}
			
			\bib{GSQ}{article}{
				author={M. Gros},
				author={B. Le Strum},
				author={A. Quir\'os},
				title={Twisted differential operators of negative level and prismatic crystals},
				journal={Tunisian J. Math.}, 
				volume={4(1)},
				pages={19,53},
				year={2022},
				doi={10.2140/tunis.2022.4.19},
			}
			
			\bib {Li}{article}{
				author={Kimihiko Li},
				title={Prismatic and $q$-crystalline sites of higher level},
				journal={Rend. Sem. Mat. Univ. Padova},
				year={to appear},
				eprint={https://arxiv.org/pdf/2102.08151}
				
			}
			
			\bib{MS}{article}{
				author={M. Morrow},
				author={ T. Tsuji},
				title={Generalised representations as $q$-connections in integral $p$-adic Hodge theory},
				year={2020},
				eprint={https://arxiv.org/pdf/2010.04059},}
			
			\bib{Ogus}{article}{
				author={A. Ogus}, 
				title={Crysalline prisms: reflections on the present and the past},
				year={2022}, 
				eprint={https://math.berkeley.edu/~ogus/preprints/crysprism.pdf},}
			
			\bib{stacks-project}{article}{
				author  = {The Stacks Project Authors},
				title   = {The Stacks project},
				year = {2021},
				eprint={https://stacks.math.columbia.edu},
			}

		\end{biblist}
	\end{bibdiv}
	
\end{document}